\title{A variational representation and Pr\'ekopa's theorem for 
Wiener functionals}
\author{Yuu Hariya\thanks{Mathematical Institute, 
Tohoku University, Aoba-ku, Sendai 980-8578, Japan. }}
\date{\empty}
\numberwithin{equation}{section}
\theoremstyle{plain}
\newtheorem{thm}{Theorem}[section]
\newtheorem{prop}{Proposition}[section]
\newtheorem{lem}{Lemma}[section]
\theoremstyle{definition}
\theoremstyle{remark}
\newtheorem{rem}{Remark}[section]
\newtheorem{exm}{Example}[section]
\DeclareMathOperator*{\esssup}{ess\,sup}
\begin{document}

\def\N {\mathbb{N}}
\def\R {\mathbb{R}}
\def\Q {\mathbb{Q}}

\def\calF {\mathcal{F}}

\def\kp {\kappa}

\def\ind {\boldsymbol{1}}

\def\al {\alpha }
\def\la {\lambda }
\def\ve {\varepsilon}
\def\Om {\Omega}

\def\v {v}

\def\ga {\gamma }

\def\W {\mathbb{W}}
\def\H {\mathbb{H}}
\def\A {\mathcal{A}}

\newcommand\ND{\newcommand}
\newcommand\RD{\renewcommand}

\ND\lref[1]{Lemma~\ref{#1}}
\ND\tref[1]{Theorem~\ref{#1}}
\ND\pref[1]{Proposition~\ref{#1}}
\ND\sref[1]{Section~\ref{#1}}
\ND\ssref[1]{Subsection~\ref{#1}}
\ND\aref[1]{Appendix~\ref{#1}}
\ND\rref[1]{Remark~\ref{#1}} 
\ND\cref[1]{Corollary~\ref{#1}}
\ND\eref[1]{Example~\ref{#1}}
\ND\fref[1]{Fig.\ {#1} }
\ND\lsref[1]{Lemmas~\ref{#1}}
\ND\tsref[1]{Theorems~\ref{#1}}
\ND\dref[1]{Definition~\ref{#1}}
\ND\psref[1]{Propositions~\ref{#1}}
\ND\rsref[1]{Remarks~\ref{#1}}
\ND\sssref[1]{Subsections~\ref{#1}}

\ND\pr{\mathbb{P}}
\ND\ex{\mathbb{E}}
\ND\br{W}

\ND\be[1]{R^{(#1)}}

\ND\E[1]{\mathcal{E}^{#1}}
\ND\no[2]{\|{#1}\|_{#2}}
\ND\nt[2]{\|{#1}\|_{#2}}
\ND\tr[2]{T^{#1}_{#2}}
\ND\si{\mathcal{S}}
\ND\lhs[1]{\log\ex\left[e^{{#1}(\br)}\right]}
\ND\inner[2]{({#1},{#2})_{\H }}
\ND\cpl[2]{\langle {#1},{#2}\rangle}

\def\thefootnote{{}}

\maketitle 

\begin{abstract}
In 1998, Bou\'e and Dupuis proved a variational representation for 
exponentials of bounded Wiener functionals. Since their proof 
involves arguments related to the weak convergence 
of probability measures, the boundedness of functionals seems inevitable. 
In this paper, we extend the representation to unbounded functionals 
under a mild assumption on their integrability. 
As an immediate application of the extension, we prove an analogue of 
Pr\'ekopa's 
theorem for Wiener functionals, which is then applied to formulate 
the Brascamp-Lieb inequality in the framework of Wiener spaces. 
\footnote{E-mail: hariya@math.tohoku.ac.jp}
\footnote{{\itshape Key Words and Phrases}. {Wiener functional}; {variational representation}; {Pr\'ekopa's theorem}, {Brascamp-Lieb inequality}.}
\footnote{
2010 {\itshape Mathematical Subject Classification}. Primary {60H30}; Secondary {60J65}, {26B25}, {60E15}.}
\end{abstract}

%%%%%% New section %%%%%%
\section{Introduction and main results}\label{;intro}
Let $\br $ be a standard $d$-dimensional Brownian motion. In \cite{bd} 
Bou\'e and Dupuis showed the following representation for 
any bounded and measurable functional $F$ that maps $C([0,1];\R ^{d})$ 
into $\R $: 
\begin{align}\label{;vr0}
 \log \ex \left[ 
 e^{F(W)}
 \right] 
 =\sup _{\v }\ex \left[ 
 F\left( \br +\int _{0}^{\cdot }\v _{s}\,ds\right) -\frac{1}{2}\int _{0}^{1}|\v _{s}|^{2}\,ds
 \right] , 
\end{align}
where the expectation $\ex $ is relative to $\br $ and the supremum is over 
all processes that are progressively measurable with respect 
to the augmentation of the natural filtration of $W$. In \cite{bd} 
the variational representation \eqref{;vr0} was proven to be useful in deriving 
various large deviation asymptotics such as Laplace principles for small noise 
diffusions described by stochastic differential equations. 
These results have been extended by Budhiraja and Dupuis 
\cite{bud} to Hilbert space-valued Brownian motion, and later generalized 
by Zhang \cite{zha} to the framework of abstract Wiener spaces. 
In Bou\'e-Dupuis \cite{bd2}, the representation \eqref{;vr0} is 
also applied to risk-sensitive stochastic control problems. 

One of the purposes of this paper is to extend the 
representation \eqref{;vr0} to any unbounded functional 
$F$ that satisfies a certain integrability condition; 
the condition we impose is reasonably weak so that it allows 
$F$ to diverge to $-\infty $ exponentially or faster at infinity 
(see \rref{;rtmain1} below). 
We note that this is an essential 
extension; since the proof given in \cite{bd} relies on 
several results relevant to the weak convergence of probability measures 
(its Lemma~2.8 for example), the boundedness of the functional $F$ 
seems inevitable. 
In this paper we use $L^{1}$-convergence results 
such as Scheff\'e's lemma instead, and show that the boundedness 
of $F$ is removable.  Our reasoning is applicable to the setting of 
an abstract Wiener space as well. 
Recently in \cite{ust}, \"Ust\"unel has extended the 
representation \eqref{;vr0} to a class of unbounded functionals 
to characterize in terms of the relative entropy the invertibility of 
path transformations of Brownian motion $\br $ of the form 
$\br +\int _{0}^{\cdot }\v _{s}\,ds$. Our proof of the extension differs 
from his and the condition we draw on the functionals is 
considerably weaker than that imposed in \cite{ust}; see 
\rsref{;rtmain1} and \ref{;rplb}. 

Pr\'ekopa's theorem states that, given a log-concave density function 
on a product of two finite-dimensional Euclidean spaces, say,  
$\R ^{m}\times \R ^{n}$, its $n$-dimensional marginal is also 
log-concave; this fact was originally proven by Pr\'ekopa \cite{pre} and then 
independently by Brascamp and Lieb \cite{bl} and Rinott \cite{rin}. 
As an application of the above-mentioned 
extension of \eqref{;vr0}, we prove an analogue of Pr\'ekopa's theorem 
for Wiener functionals. The derivation is straightforward once the extension 
of \eqref{;vr0} is established. This analogue of Pr\'ekopa's theorem is 
then applied to extend the so-called Brascamp-Lieb moment inequality 
\cite{bl} to the framework of Wiener spaces; 
our argument bypasses any discretization steps and 
it allows us to formulate the inequality in a fairly general 
situation in which no specific regularities such as continuity 
are required for functionals involved in it.  
We also refer to \rref{;rappend} in the appendix for 
another motivation for the extension of \eqref{;vr0}. 
We note that by employing a finite-dimensionalization procedure, 
Pr\'ekopa's theorem is extended to the setting of an abstract Wiener 
space in Feyel and \"Ust\"unel \cite{fu}; our framework for the 
extension is wider than that of \cite{fu} in some respect, which enables 
us to recover original Pr\'ekopa's theorem from ours under the 
integrability condition associated with the extension of  
\eqref{;vr0}. See \rref{;rtmain2}. 

We write $\W $ for the space $C([0,1];\R ^{d})$ of all 
$\R ^{d}$-valued continuous functions on $[0,1]$ vanishing 
at the origin, equipped with the norm 
\begin{align*}
 |w|_{\W }:=\sup _{0\le t\le 1}|w(t)|, \quad w\in \W . 
\end{align*}
We denote by $\mathcal{B}(\W )$ the associated Borel $\sigma $-field 
and by $\pr $ the Wiener measure on $(\W ,\mathcal{B}(\W ))$. In the sequel 
we denote by $\br $ the 
coordinate mapping process on $\W $: 
\begin{align*}
 \br _{t}(w):=w(t), \quad 0\le t\le 1,\, w\in \W . 
\end{align*}
We set 
\begin{align*}
 \calF _{t}:=\sigma (\br _{s},0\le s\le t)\vee \mathcal{N}, \quad 
 0\le t\le 1, 
\end{align*}
the filtration generated by $\br $ and augmented by the set 
$\mathcal{N}$ of all $\pr $-null events. We denote by 
$\A $ the set of all $\R ^{d}$-valued $\{ \calF _{t}\} $-progressively 
measurable processes 
$
\v 
=\bigl\{ \v _{t}=\bigl( \v ^{(1)}_{t},\ldots ,\v ^{(d)}_{t}\bigr) \bigr\} 
_{0\le t\le 1}
$ 
satisfying 
\begin{align*}
 \int _{0}^{1}\ex \left[ |\v _{t}|^{2}\right] dt<\infty . 
\end{align*}
Here and in what follows, $\ex $ denotes the expectation with 
respect to $\pr $ and $|x|$ stands for the Euclidean norm of 
$x\in \R ^{d}$. 

Let $F:\W \to \R $ be measurable. We assume: 
\begin{enumerate}[($\mathrm{A}$1)]{}
 \item it holds that $\ex \left[ e^{F(\br )}\right] <\infty $; 
 
 \item there exists $\delta >0$ such that 
 \begin{align*}
  \ex \left[ 
  F_{-}(\br )^{1+\delta }
  \right] <\infty , 
 \end{align*}
 where we set 
 $F_{-}(w):=\max \{ -F(w),0\} ,\,w\in \W $. 
\end{enumerate}
One of the main results of the paper is then stated as follows: 

\begin{thm}\label{;tmain1}
 For any measurable function $F:\W \to \R $ satisfying 
 \thetag{A1} and \thetag{A2}, 
 the following variational representation holds: 
 \begin{align}\label{;vr1}
  \log \ex \left[ 
 e^{F(W)}
 \right] 
 =\sup _{\v \in \A }\ex \left[ 
 F\left( \br +\int _{0}^{\cdot }\v _{s}\,ds\right) 
 -\frac{1}{2}\int _{0}^{1}|\v _{s}|^{2}\,ds
 \right] . 
 \end{align}
\end{thm}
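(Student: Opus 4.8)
The plan is to deduce \eqref{;vr1} from the known bounded case \eqref{;vr0} by a double truncation, proving the two inequalities separately and using $L^{1}$-convergence where the bounded case used weak convergence. For $N,M>0$ set $G_{N,M}:=(F\wedge M)\vee(-N)$: a bounded measurable functional, with $G_{N,M}=F^{+}\wedge M-F_{-}\wedge N$ and $G_{N,M}\le F$ pointwise, so that $e^{G_{N,M}}\le e^{F}$ and $\ex[e^{G_{N,M}(\br)}]\le\ex[e^{F(\br)}]<\infty$ by \thetag{A1}.

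For the inequality ``$\ge$'' in \eqref{;vr1} I fix $\v\in\A$ and apply \eqref{;vr0} to $G_{N,M}$:
\begin{align*}
 \ex\!\left[G_{N,M}\!\left(\br+\int_{0}^{\cdot}\v_{s}\,ds\right)-\frac12\int_{0}^{1}|\v_{s}|^{2}\,ds\right]\le\log\ex\!\left[e^{G_{N,M}(\br)}\right]\le\log\ex\!\left[e^{F(\br)}\right].
\end{align*}
Letting $M\to\infty$ with $N$ fixed, monotone convergence pushes $\ex[(F^{+}\wedge M)(\br+\int_{0}^{\cdot}\v_{s}\,ds)]$ up to $\ex[F^{+}(\br+\int_{0}^{\cdot}\v_{s}\,ds)]$; as the other two terms on the left are finite and fixed while the left side stays $\le\log\ex[e^{F(\br)}]$, this limit is finite, so $F^{+}(\br+\int_{0}^{\cdot}\v_{s}\,ds)$ is integrable for every $\v\in\A$. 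Letting then $N\to\infty$, monotone convergence pushes $\ex[(F_{-}\wedge N)(\br+\int_{0}^{\cdot}\v_{s}\,ds)]$ up to $\ex[F_{-}(\br+\int_{0}^{\cdot}\v_{s}\,ds)]$, so the left side converges to $\ex[F(\br+\int_{0}^{\cdot}\v_{s}\,ds)-\frac12\int_{0}^{1}|\v_{s}|^{2}\,ds]$ (well defined in $[-\infty,\infty)$), and taking the supremum over $\v\in\A$ gives ``$\ge$''. Only \thetag{A1} is used.

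For the reverse inequality I invoke a standard consequence of the martingale representation and Girsanov theorems: for bounded measurable $G$ there is $\v^{G}\in\A$ such that $\br+\int_{0}^{\cdot}\v^{G}_{s}\,ds$ has law $\mathbb{Q}^{G}:=e^{G(\br)}\,\pr\big/\ex[e^{G(\br)}]$ under $\pr$, with $\frac12\ex[\int_{0}^{1}|\v^{G}_{s}|^{2}\,ds]=H(\mathbb{Q}^{G}\,|\,\pr)$, the relative entropy, which is finite since $G$ is bounded. Applying this with $G=G_{N,N}$ and evaluating the right-hand side of \eqref{;vr1} at $\v^{G_{N,N}}$, a change of variables together with the identity $H(\mathbb{Q}^{G_{N,N}}\,|\,\pr)=\ex[\,e^{G_{N,N}(\br)}G_{N,N}(\br)\,]\big/\ex[e^{G_{N,N}(\br)}]-\log\ex[e^{G_{N,N}(\br)}]$ give
\begin{align*}
 \ex\!\left[F\!\left(\br+\int_{0}^{\cdot}\v^{G_{N,N}}_{s}\,ds\right)-\frac12\int_{0}^{1}\bigl|\v^{G_{N,N}}_{s}\bigr|^{2}\,ds\right]=\log\ex\!\left[e^{G_{N,N}(\br)}\right]+\frac{\ex\!\left[\,(F-G_{N,N})(\br)\,e^{G_{N,N}(\br)}\,\right]}{\ex\!\left[e^{G_{N,N}(\br)}\right]}.
\end{align*}
Since $G_{N,N}=-N$ on $\{F<-N\}$, the last term is at least $-e^{-N}\ex[(F_{-}(\br)-N)^{+}]\big/\ex[e^{G_{N,N}(\br)}]$; by \thetag{A2}, $\ex[(F_{-}(\br)-N)^{+}]\le\ex[F_{-}(\br)^{1+\delta}]\big/N^{\delta}\to0$, while $\ex[e^{G_{N,N}(\br)}]\to\ex[e^{F(\br)}]\in(0,\infty)$ by dominated convergence ($e^{G_{N,N}}\le e^{F}$, cf.\ Scheff\'e's lemma), hence $\log\ex[e^{G_{N,N}(\br)}]\to\log\ex[e^{F(\br)}]$. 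Letting $N\to\infty$ shows the right-hand side of \eqref{;vr1} is $\ge\log\ex[e^{F(\br)}]$, which completes the proof.

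The step I expect to be the main obstacle is the auxiliary fact just invoked: the existence, for bounded $G$, of the admissible drift $\v^{G}$ transporting $\pr$ to $\mathbb{Q}^{G}$ with energy $2H(\mathbb{Q}^{G}\,|\,\pr)$, and the validity of the change of variables in the last display. This is the only place where genuine stochastic calculus enters, and it needs the usual care --- reading the Girsanov drift off the path space of the $\mathbb{Q}^{G}$-Brownian motion and transporting it back to a progressively measurable process on $(\W,\{\calF_{t}\}_{t},\pr)$ with the correct $L^{2}$-norm. The rest is soft analysis: monotone and dominated convergence, and the elementary tail bound for $F_{-}$ from \thetag{A2}.
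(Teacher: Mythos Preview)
Your lower-bound argument matches the paper's \pref{;plb}: truncate, apply the bounded case \eqref{;vr0}, pass to the limit by monotone convergence. One slip, though: the claim ``$G_{N,M}\le F$ pointwise'' is false on $\{F<-N\}$, where $G_{N,M}=-N>F$. This invalidates both the intermediate inequality $\log\ex\bigl[e^{G_{N,M}(\br)}\bigr]\le\log\ex\bigl[e^{F(\br)}\bigr]$ in your first display and the domination $e^{G_{N,N}}\le e^{F}$ you invoke at the end. Both are easily repaired (use $e^{G_{N,M}}\le 1+e^{F}$, or keep $\log\ex\bigl[e^{G_{N,M}(\br)}\bigr]$ on the right and let $M\to\infty$ then $N\to\infty$ by monotone and then dominated convergence), so this is minor.

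The substantive point is your ``auxiliary fact'' for the upper bound: for bounded $G$, the existence of $v^{G}\in\A$ on the canonical space with $\pr\circ\bigl(\br+\int_{0}^{\cdot}v^{G}_{s}\,ds\bigr)^{-1}=\mathbb{Q}^{G}$ and $\tfrac12\nt{v^{G}}{\A}^{2}=H(\mathbb{Q}^{G}\,|\,\pr)$. This is the assertion that the Bou\'e--Dupuis supremum is \emph{attained}, with the optimiser realising the Gibbs law; it does not follow from \eqref{;vr0} alone. Concretely, Clark--Ocone gives $u$ adapted to $\{\calF_{t}\}$ with $d\mathbb{Q}^{G}/d\pr=\E{u}_{1}$, and under $\mathbb{Q}^{G}$ the process $\tr{-u}{}(\br)$ is Brownian; but to produce your $v^{G}$ you must show that $\br$ is adapted to the filtration of $\tr{-u}{}(\br)$, i.e.\ that the shift $\tr{-u}{}$ is invertible. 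Bounded density by itself does not settle this (Tsirelson-type obstructions exist for bounded drifts), and it is exactly the question treated in \cite{ust}. The paper circumvents it: it approximates $F$ by smooth cylinder functionals $F_{n}$, obtains bounded Clark--Ocone drifts $v^{n}\in\A_{b}$ (\lref{;co}, \lref{;relent}), and then in \pref{;pkey} approximates each $v^{n}$ by \emph{simple} processes in $\si$, for which the inversion is an explicit, elementary construction (\lref{;J3}), passing to the limit via Scheff\'e's lemma (\lref{;scheffe}) and the moment bound of \lref{;ebound}. So what you package as one citable line is the content of \ssref{;prfkey}. If you can cite the attainment result for bounded $G$ (it is true, and can be extracted from \cite{ust} or from the argument in \cite{zha}), then your route is a genuine shortcut: your remainder estimate $-e^{-N}\ex\bigl[(F_{-}(\br)-N)^{+}\bigr]\big/\ex\bigl[e^{G_{N,N}(\br)}\bigr]\to 0$ replaces the paper's three-term decomposition $I^{1}_{n}+I^{2}_{n}+I^{3}_{n}$ and uses \thetag{A2} very cleanly. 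But calling it ``the usual care'' undersells it --- this inversion is where the real stochastic calculus lives, not the soft analysis surrounding it.
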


We denote by $\H $ the Cameron-Martin subspace of $\W $, namely 
$\H $ consists of all elements $h=(h_{1},\ldots ,h_{d})$ in $\W $ 
such that for each $i=1,\ldots ,d$, the coordinate $h_{i}$ is 
an absolutely continuous function whose derivative satisfies 
\begin{align*}
 \int _{0}^{1}\left( \dot{h}_{i}(t)\right) ^{2}dt<\infty ; 
\end{align*}
recall that $\H $ is a Hilbert space with respect to the inner 
product 
\begin{align*}
 \inner{h^{1}}{h^{2}}
 :=\sum _{i=1}^{d}\int _{0}^{1}
 \dot{h}_{i}^{1}(t)\dot{h}_{i}^{2}(t)\,dt, 
 \quad h^{1},h^{2}\in \H . 
\end{align*}
For every $h\in \H $, we denote $|h|_{\H }=\sqrt{\inner{h}{h}}$. 
The next theorem gives an analogue of Pr\'ekopa's 
theorem on the (classical) Wiener space $\left( \W ,\H ,\pr \right) $. 

\begin{thm}\label{;tmain2}
 Let $L$ be a real vector space and $\Lambda $ a convex subset of 
 $L$. We suppose $G:\W \times \Lambda \to \R $ to be such that: 
 \begin{itemize}%%[$\mathrm{B}$\textstyle{1}]{}
  \item[$\mathrm{(B1)}$] for each $\la \in \Lambda $, the mapping 
  $G(\cdot ,\la ):\W \to \R $ is measurable and satisfies 
  \thetag{A2}; 
  
  \item[$\mathrm{(B2)}$] it holds that 
  for any $w_{1},w_{2}\in \W $ with $w_{1}-w_{2}\in \H $, and 
  for any $\la _{1},\la _{2}\in \Lambda $ and $\theta \in [0,1]$, 
  \begin{align*}
   &G\left( 
   \theta w_{1}+(1-\theta )w_{2},\theta \la _1+(1-\theta )\la _{2}
   \right) \\
   &\ge \theta G(w_{1},\la _{1})+(1-\theta )G(w_{2},\la _{2})
   -\frac{1}{2}\theta (1-\theta )\left| 
   w_{1}-w_{2}
   \right| _{\H }^{2}. 
  \end{align*}
 \end{itemize}
 Then the mapping 
 $\Lambda \ni \la \mapsto \log \ex \left[ e^{G(\br ,\la )}\right] $
 is concave. Here we use the convention that $\log \infty =\infty $. 
\end{thm}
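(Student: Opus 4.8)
The plan is to obtain the concavity directly from the variational representation of \tref{;tmain1}; condition $(\mathrm{B2})$ is designed so that the Cameron--Martin correction $-\tfrac12\theta(1-\theta)|w_{1}-w_{2}|_{\H }^{2}$ it carries is exactly absorbed by the cross term produced by the quadratic penalty $\tfrac12\int_{0}^{1}|\v _{s}|^{2}\,ds$. It is convenient to argue first under the extra hypothesis that $\ex[e^{G(\br ,\la )}]<\infty $ for every $\la \in \Lambda $, so that each $G(\cdot ,\la )$ satisfies $(\mathrm{A1})$ as well as $(\mathrm{A2})$ and \tref{;tmain1} applies to it. Fix $\la _{1},\la _{2}\in \Lambda $, $\theta \in (0,1)$ (the endpoints being trivial), and put $\la :=\theta \la _{1}+(1-\theta )\la _{2}\in \Lambda $. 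Given $\ve >0$, \tref{;tmain1} provides $\ve $-optimal controls $\v ^{1},\v ^{2}\in \A $ with
\begin{align*}
 \ex\!\left[G\!\left(\br +\int_{0}^{\cdot }\v ^{i}_{s}\,ds,\la _{i}\right)-\frac12\int_{0}^{1}|\v ^{i}_{s}|^{2}\,ds\right]\ge \log \ex[e^{G(\br ,\la _{i})}]-\ve ,\qquad i=1,2;
\end{align*}
since $0<\ex[e^{G(\br ,\la _{i})}]<\infty $, these left-hand sides are finite, hence the integrands are $\pr $-integrable. Set $\v :=\theta \v ^{1}+(1-\theta )\v ^{2}$. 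Convexity of $|\cdot |^{2}$ and stability of progressive measurability under linear combinations give $\v \in \A $, and for $\pr $-a.e.\ $\omega $ we have $\int_{0}^{\cdot }\v ^{i}_{s}(\omega )\,ds\in \H $; writing $w_{i}:=\br (\omega )+\int_{0}^{\cdot }\v ^{i}_{s}(\omega )\,ds\in \W $, one gets $w_{1}-w_{2}=\int_{0}^{\cdot }(\v ^{1}_{s}-\v ^{2}_{s})(\omega )\,ds\in \H $ with $|w_{1}-w_{2}|_{\H }^{2}=\int_{0}^{1}|\v ^{1}_{s}-\v ^{2}_{s}|^{2}\,ds$.

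The key step is pointwise. Since $\br +\int_{0}^{\cdot }\v _{s}\,ds=\theta w_{1}+(1-\theta )w_{2}$, applying $(\mathrm{B2})$ for $\pr $-a.e.\ $\omega $ and using the identity $|\theta a+(1-\theta )b|^{2}=\theta |a|^{2}+(1-\theta )|b|^{2}-\theta (1-\theta )|a-b|^{2}$ with $a=\v ^{1}_{s},\,b=\v ^{2}_{s}$ (integrated over $s\in [0,1]$), the $-\tfrac12\theta (1-\theta )|w_{1}-w_{2}|_{\H }^{2}$ term cancels the cross term coming from the penalty, leaving the $\pr $-a.s.\ inequality
\begin{align*}
 G\!\left(\br +\int_{0}^{\cdot }\v _{s}\,ds,\la \right)-\frac12\int_{0}^{1}|\v _{s}|^{2}\,ds\ge \sum_{i=1}^{2}c_{i}\!\left[G\!\left(\br +\int_{0}^{\cdot }\v ^{i}_{s}\,ds,\la _{i}\right)-\frac12\int_{0}^{1}|\v ^{i}_{s}|^{2}\,ds\right],
\end{align*}
with $c_{1}=\theta $, $c_{2}=1-\theta $. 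The right-hand side being $\pr $-integrable, its expectation is at least $\theta \log \ex[e^{G(\br ,\la _{1})}]+(1-\theta )\log \ex[e^{G(\br ,\la _{2})}]-\ve $; taking expectations above and invoking \tref{;tmain1} once more (trivially if $\ex[e^{G(\br ,\la )}]=\infty $, otherwise noting the left-hand side is $\le \log \ex[e^{G(\br ,\la )}]$) yields $\log \ex[e^{G(\br ,\la )}]\ge \theta \log \ex[e^{G(\br ,\la _{1})}]+(1-\theta )\log \ex[e^{G(\br ,\la _{2})}]-\ve $, and $\ve \downarrow 0$ completes this case.

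To drop the auxiliary hypothesis and accommodate the convention $\log \infty =\infty $, I would truncate: for $N\in \N $ put $G^{N}:=G\wedge N$. Each $G^{N}(\cdot ,\la )$ satisfies $(\mathrm{A1})$ (being bounded above) and $(\mathrm{A2})$ (since $(G^{N})_{-}\le G_{-}$), and $G^{N}$ still satisfies $(\mathrm{B2})$: the right-hand side of the $(\mathrm{B2})$ inequality is dominated both by $N$ and, via $(\mathrm{B2})$ for $G$, by $G(\theta w_{1}+(1-\theta )w_{2},\theta \la _{1}+(1-\theta )\la _{2})$, hence by their minimum. The case already settled shows $\la \mapsto \log \ex[e^{G^{N}(\br ,\la )}]$ is concave, and since $e^{G^{N}(\br ,\la )}\uparrow e^{G(\br ,\la )}$, monotone convergence gives $\log \ex[e^{G^{N}(\br ,\la )}]\uparrow \log \ex[e^{G(\br ,\la )}]$ for each $\la $, so the limiting map is concave too.

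The conceptual heart is the one-line cancellation between the penalty cross term and the $(\mathrm{B2})$ correction. I expect the only genuine work to be bookkeeping: ensuring that every expectation in sight is well defined (which is precisely why the $\ve $-optimal controls are used, since their integrands are automatically finite and hence integrable), that $\v =\theta \v ^{1}+(1-\theta )\v ^{2}\in \A $, and that $\int_{0}^{\cdot }(\v ^{1}_{s}-\v ^{2}_{s})\,ds$ lies in $\H $ for $\pr $-a.e.\ $\omega $.
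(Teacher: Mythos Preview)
Your argument is correct, and the core mechanism---the exact cancellation between the cross term of the quadratic penalty and the $-\tfrac12\theta(1-\theta)|w_{1}-w_{2}|_{\H}^{2}$ term in \thetag{B2}---is identical to the paper's. The difference is in the scaffolding around it.

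The paper does not go through \tref{;tmain1} and a truncation. Instead it invokes \pref{;lu}, the variant of the variational representation in which the supremum runs only over the class $\si$ of simple processes and which requires \emph{only} \thetag{A2}, not \thetag{A1}. Since \thetag{B1} already furnishes \thetag{A2} for every $\la$, \pref{;lu} applies directly to $G(\cdot,\la)$ with the convention $\log\infty=\infty$ built in, so no auxiliary finiteness hypothesis and no $G^{N}=G\wedge N$ step are needed. The paper then simply takes arbitrary $\v^{1},\v^{2}\in\si$ (rather than $\ve$-optimal controls in $\A$), applies \thetag{B2} pointwise, takes expectations, and maximizes; closure of $\si$ under convex combinations and the guaranteed finiteness of $\ex[G_{-}(\tr{\v}{}(\br),\la)]$ for $\v\in\si$ make the integrability bookkeeping automatic. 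Your route via $\ve$-optimal controls plus truncation achieves the same end but is a bit longer; its advantage is that it uses only the headline \tref{;tmain1} and not the auxiliary \pref{;lu}, so it is the natural argument if one has access only to the statement of the main representation.
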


We give remarks on \tsref{;tmain1} and \ref{;tmain2}. 

\begin{rem}\label{;rtmain1}
\thetag{1}~Suppose that there exist constants $0\le C_{1}<1/2$, $0<\al <2$ and 
$C_{2}\ge 0$ such that for $\pr $-a.e.\ $w\in \W $,  
\begin{align*}
  \log \left( 1+F_{-}(w)\right) \le C_{2}\left( 1+|w|_{\W }^{\al }\right) 
  +C_{1}|w|_{\W }^{2}. 
 \end{align*}
Then the assumption \thetag{A2} is fulfilled, which 
may be deduced from the fact (see, e.g., 
\cite[Exercise~4.4.13]{ks}) that for all $a<1/2$, 
\begin{align*}
 \ex \left[ 
 \exp \left( a|\br |_{\W }^2\right) 
 \right] <\infty . 
\end{align*}

\noindent 
\thetag{2}~Under the assumption 
\thetag{A1},  the right-hand side of \eqref{;vr1} is 
well-defined in the sense that for any $\v \in \A $, 
\begin{align*}
 \ex \left[ 
 F_{+}\left( \br +\int _{0}^{\cdot }\v _{s}\,ds\right) 
 \right] <\infty , \quad  F_{+}:=\max \left\{ F,0\right\} , 
\end{align*}
while 
$
 \ex \left[ 
 F_{-}\left( \br +\int _{0}^{\cdot }\v _{s}\,ds\right) 
 \right] 
$ 
may take the value $\infty $ for some $\v \in \A $; 
see the proof of \pref{;plb}. 
As will also be seen below, 
the supremum over $\v \in \A $ in the representation \eqref{;vr1} 
can be replaced by that over all $\v $'s in $\si $, a particular 
class of simple processes defined after \lref{;ebound}. This 
replacement allows us to remove the assumption \thetag{A1} as to 
the well-definedness mentioned above because 
we have 
$
 \ex \left[ 
 F_{-}\left( \br +\int _{0}^{\cdot }\v _{s}\,ds\right) 
 \right] <\infty 
$ 
for all $\v \in \si $; see \pref{;lu}. 

\noindent 
\thetag{3}~As shown in \cite[Section~5]{bd}, 
the representation \eqref{;vr0} for any bounded $F$ can be 
extended to any $F$ which is only assumed to be bounded from below. 
This extension is a direct consequence of the monotone convergence theorem: 
For each positive real $M$, truncating $F$ from above by 
$M$,  we have from \eqref{;vr0}, 
\begin{align*}
 \log \ex \left[ 
 e^{F_{M}(W)}
 \right] 
 =\sup _{\v \in \A }\ex \left[ 
 F_{M}\left( \br +\int _{0}^{\cdot }\v _{s}\,ds\right) 
 -\frac{1}{2}\int _{0}^{1}|\v _{s}|^{2}\,ds
 \right] , 
\end{align*}
where we set 
$
F_{M}=
\min \left\{ F,M\right\} . 
$ 
Then by the monotone convergence theorem, 
the left-hand side converges as $M\to \infty $ to 
the expression with $F_{M}$ replaced by $F$, and so does 
the right-hand side since 
\begin{equation}\label{;supsup}
 \begin{split}
 &\sup _{M>0}\sup _{\v \in \A }
 \ex \left[ 
 F_{M}\left( \br +\int _{0}^{\cdot }\v _{s}\,ds\right) 
 -\frac{1}{2}\int _{0}^{1}|\v _{s}|^{2}\,ds
 \right] 
 \\
 =&\sup _{\v \in \A }\sup _{M>0}
 \ex \left[ 
 F_{M}\left( \br +\int _{0}^{\cdot }\v _{s}\,ds\right) 
 -\frac{1}{2}\int _{0}^{1}|\v _{s}|^{2}\,ds
 \right] \\
 =&\sup _{\v \in \A }\ex \left[ 
 F\left( \br +\int _{0}^{\cdot }\v _{s}\,ds\right) 
 -\frac{1}{2}\int _{0}^{1}|\v _{s}|^{2}\,ds
 \right] . 
 \end{split}
\end{equation}
Therefore the essential part of \tref{;tmain1} is the 
removal of the boundedness from below of $F$. 

\noindent 
\thetag{4}~In \cite[Theorem~7]{ust}, the representation \eqref{;vr1} 
is proven to be valid under the condition that for some 
$p,q>1$ with $p^{-1}+q^{-1}=1$, 
\begin{align*}
 \ex \left[ 
 \left|F(\br )\right| ^{p}\right] <\infty \quad \text{and} \quad 
 \ex \left[ e^{qF(\br )}\right] <\infty 
\end{align*}
while our assumption of \tref{;tmain1} is equivalently rephrased as 
\thetag{A1} and 
$
 \ex \left[ 
 \left|F(\br )\right| ^{p}\right] <\infty 
$ for some $p>1$.  
\end{rem}

\begin{rem}\label{;rtmain2} 
\thetag{1}~Though the proof of \tref{;tmain2} is easily done in 
its generality, the generalization to any real vector space 
$L$ is not essential since the concavity is an expression on 
a line segment 
$\theta \la _{1}+(1-\theta )\la _{2},\,0\le \theta \le 1$, 
for every fixed $\la _{1}, \la _{2}\in \Lambda $. 

\noindent 
\thetag{2}~Let $g:\R ^{d}\times \Lambda \to \R $ be such that 
\begin{align}\label{;lcf}
 \text{the mapping $\R ^{d}\times \Lambda \ni (x,\la )\mapsto 
 g(x,\la )-|x|^{2}/2$ is concave. }
\end{align}
Then the functional $G$ defined by 
$G(w,\la )=g\left( w(1),\la \right) ,\,(w,\la )\in \W \times \Lambda $, 
satisfies the condition \thetag{B2} of \tref{;tmain2}; indeed, letting 
$w_{1},w_{2}$, $\la _{1},\la _{2}$, and $\theta $ be as in \thetag{B2}, 
we have from \eqref{;lcf}, 
\begin{align*}
 &g\bigl( \theta w_{1}(1)+(1-\theta )w_{2}(1),
 \theta \la _{1}+(1-\theta )\la _{2}
 \bigr) 
 -\theta g(w_{1}(1),\la _{1})-(1-\theta )g(w_{2}(1),\la _{2}) \\
 &\ge -\frac{1}{2}\theta (1-\theta )\left| w_{1}(1)-w_{2}(1)\right| ^{2}\\
 &\ge -\frac{1}{2}\theta (1-\theta )\left| w_{1}-w_{2}\right| _{\H }^{2}, 
\end{align*}
where the second inequality follows from the fact that for any $h\in \H $, 
\begin{align*}
 |h(1)|=\left| \int _{0}^{1}\dot{h}(t)\,dt\right| \le |h|_{\H }. 
\end{align*}
Therefore Pr\'ekopa's theorem in finite dimension is recovered 
from \tref{;tmain2} when the corresponding $G$ defined 
above satisfies the integrability condition in \thetag{B1}. 

\noindent 
\thetag{3}~The condition \thetag{B2} only concerns a pair of paths 
$w_{1}, w_{2}\in \W $ whose difference is in $\H $, which we think 
reflects the fact that the structure of the Wiener space 
$(\W ,\mathcal{B}(\W ),\pr )$ is determined by its 
{\itshape skeleton} $\H $. Introduced by Feyel and \"Ust\"unel \cite{fu} 
is the notion of $\H $-convexity, which, roughly speaking, is 
an almost sure convexity in the direction of $\H $. 
In Theorem~4.1 of \cite{fu}, Pr\'ekopa's theorem is extended to the 
product of two abstract Wiener spaces, which we rephrase in our 
present setting as follows: 
if $G:\W \times \W \to \R $ is measurable and 
$\H \times \H $-concave, namely 
\begin{equation}\label{;ustc}
 \begin{split}
 &G\left( 
 w_{1}+\theta h^{1}+(1-\theta )k^{1}, w_{2}+\theta h^{2}+(1-\theta )k^{2}
 \right) \\
 &\ge \theta G\left( w_{1}+h^{1},w_{2}+h^{2}\right) 
 +(1-\theta )G\left( w_{1}+k^{1},w_{2}+k^{2}\right) 
 \end{split}
\end{equation}
for $\pr \times \pr $-a.e.\ $(w_{1},w_{2})\in \W \times \W $ for 
every $h^{i},k^{i}\in \H ,\,i=1,2$, and $\theta \in [0,1]$, then the 
mapping 
\begin{align*}
 \W \ni w_{2}\mapsto \log \int _{\W }e^{G(w_{1},w_{2})}\,\pr (dw_{1})
\end{align*}
admits a version which is measurable and concave on $\W $. 
This assertion is proven by using 
finite-dimensional Pr\'ekopa's theorem 
and the Fourier expansion of elements in $\W $ along a given complete 
orthogonal basis of $\H $. While the above condition is weaker 
than \thetag{B2} in the respect that it allows a negligible set 
on which the relation \eqref{;ustc} fails, it does not allow  
the presence of the additional term 
$-(1/2)\theta (1-\theta )|h^{1}-k^{1}|_{\H }^{2}$ as in \thetag{B2}; 
it seems difficult to draw such a term from a finite-dimensionalizing 
procedure as used in \cite{fu}. 

\end{rem}

The rest of the paper is organized as follows: 
\sref{;prfvr} is devoted to the proof of \tref{;tmain1}. 
The lower bound in the representation \eqref{;vr1} is 
proven in \ssref{;ssplb}; we prove the upper bound 
in \ssref{;sspub} using the key \pref{;pkey} whose proof is 
given in \ssref{;prfkey}; we also show in \ssref{;prfkey} a 
variant of \tref{;tmain1} as \pref{;lu}, which is deduced 
from the proof of the theorem. 
In \sref{;prekopa} we prove \tref{;tmain2} 
and provide its application to the extension of 
the Brascamp-Lieb inequality to the framework of 
Wiener spaces. The proof of \tref{;tmain2} is given 
in \ssref{;prfpre} by using  \pref{;lu}; in \ssref{;bl} we 
formulate and prove the Brascamp-Lieb inequality 
on the Wiener space by applying \tref{;tmain2}. 
In the appendix, we discuss an extension of the 
Brascamp-Lieb inequality to the framework of 
nonconvex potentials in the case of one dimension. 
\bigskip 

For every $a,b\in \R $, we write $a\vee b=\max\{ a,b\} ,\ 
a\wedge b=\min \{ a,b\} $. For every $x,y\in \R ^{d}$, 
we write $x\cdot y$ for the inner product of $x$ and $y$ 
in $\R ^{d}$ and denote $|x|=\sqrt{x\cdot x}$ as above. 
For every $1\le p\le \infty $, we denote by $L^{p}(\pr )$ 
the set of all $\R $-valued random variables $X$ defined on 
the probability space $(\W ,\mathcal{B}(\W ),\pr )$ such that 
\begin{align*}
 \left\{ \no{X}{p}\right\} ^{p}:=\ex \left[ |X|^{p}\right] <\infty &
 && \text{for }p<\infty 
\intertext{and }
 \no{X}{\infty }:=\esssup _{w\in \W }|X(w)|<\infty &
 && \text{for }p=\infty . 
\end{align*} 
Here and in what follows the notation 
$\esssup \limits_{w\in \W }$ stands for the essential supremum 
over $w\in \W $ with respect to $\pr $. Other notation will be 
introduced as needed.

%%%%% New Section %%%%%
\section{Proof of \tref{;tmain1}}\label{;prfvr} 

In this section we give a proof of \tref{;tmain1}. 

For each $\v \in \A $, we denote by $\tr{\v }{}$ the path 
transform defined by 
\begin{align*} 
 \tr{\v }{t}(w):=w(t)+\int _{0}^{t}\v _{s}(w)\,ds, 
 \quad 0\le t\le 1,\ w\in \W . 
\end{align*}
We also set the process 
$\E{\v }=\{ \E{\v }_{t}\} _{0\le t\le 1}$ to 
be an $\{ \calF \} _{t}$-local martingale defined by 
\begin{align*}
 \E{\v }_{t}:=\exp 
 \left( 
 \int _{0}^{t}\v _{s}\cdot dW_{s}-\frac{1}{2}\int _{0}^{t}|\v _{s}|^2\,ds
 \right) , \quad 0\le t\le 1. 
\end{align*}
In the case that $\E{\v }$ is a true martingale, we define the probability 
measure $\pr ^{\v }$ on $(\W ,\mathcal{B}(\W ))$ by 
\begin{align}\label{;dpv}
 \pr ^{\v }(A):=\ex \left[ 
 \ind _{A}\E{\v }_{1}
 \right] , \quad A\in \mathcal{B}(\W ), 
\end{align}
and denote by $\ex ^{\v }$ the expectation with respect to 
$\pr ^{\v }$. By Girsanov's formula, the process 
$\tr{-v}{}(\br )$ is a standard Brownian motion 
under $\pr ^{\v }$, which may be rephrased in the 
statement that the identity 
\begin{align}\label{;gir}
 \ex ^{\v }\left[ F\left( \tr{-\v }{}(\br )\right) \right] 
 =\ex \left[ F(\br )\right] 
\end{align}
holds for any nonnegative 
measurable functional $F$ on $\W $. 

We say that an element $\v $ in $\A $ is bounded if 
\begin{align*}
 \sup _{0\le t\le 1}\no{|\v _{t}|}{\infty }<\infty . 
\end{align*}
The set of all bounded elements in $\A $ will be denoted by $\A _{b}$. 
Well-known Novikov's condition implies that if 
$\v \in \A _{b}$, then $\E{\v }$ is a martingale. 
The following simple fact will also be referred to 
frequently: 

\begin{lem}\label{;ebound}
 Suppose that $\v \in \A _{b}$. Then it holds that for any 
 $p>1$ and $0\le t\le 1$, 
 \begin{align*}
 \ex \left[ 
 \left( \E{\v }_{t}\right) ^{p}
 \right] 
 \le \exp \left\{ 
 \frac{1}{2}p(p-1)\sup _{0\le t\le 1}\no{|\v _{t}|}{\infty }^2
 \right\} . 
 \end{align*}
\end{lem}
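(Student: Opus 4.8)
\textbf{Proof plan for Lemma~\ref{;ebound}.}
The plan is to exploit the elementary identity that writes a power of an exponential martingale as another exponential martingale times a Gaussian-type correction. Fix $p>1$ and $0\le t\le 1$, and set $K:=\sup _{0\le s\le 1}\no{|\v _{s}|}{\infty }^{2}$, which is finite since $\v \in \A _{b}$. First I would observe that
\begin{align*}
 \left( \E{\v }_{t}\right) ^{p}
 =\exp \left(
 \int _{0}^{t}p\v _{s}\cdot dW_{s}-\frac{1}{2}\int _{0}^{t}p^{2}|\v _{s}|^{2}\,ds
 \right)
 \times \exp \left(
 \frac{1}{2}(p^{2}-p)\int _{0}^{t}|\v _{s}|^{2}\,ds
 \right) ,
\end{align*}
so that the first factor on the right-hand side is precisely $\E{p\v }_{t}$, the stochastic exponential associated with the process $p\v$, while the second factor is a deterministic-order correction bounded pathwise by $\exp \left\{ \tfrac12 p(p-1)K\right\}$ because $\int _{0}^{t}|\v _{s}|^{2}\,ds\le tK\le K$.

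The next step is to take expectations. Since $p\v$ again lies in $\A _{b}$ (its sup-norm bound is $p$ times that of $\v$), Novikov's condition applies and $\E{p\v }$ is a true martingale, hence $\ex [\E{p\v }_{t}]=1$ for every $t$. Pulling the pathwise bound on the second factor out of the expectation then gives
\begin{align*}
 \ex \left[ \left( \E{\v }_{t}\right) ^{p}\right]
 \le \exp \left\{ \frac{1}{2}p(p-1)K\right\} \ex \left[ \E{p\v }_{t}\right]
 =\exp \left\{ \frac{1}{2}p(p-1)K\right\} ,
\end{align*}
which is the claimed inequality. The argument uses only Novikov's criterion (already invoked in the excerpt for bounded integrands) and the trivial bound $\int _{0}^{t}|\v _{s}|^{2}\,ds\le \sup _{s}\no{|\v _{s}|}{\infty }^{2}$.

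I do not anticipate a genuine obstacle here; the only point requiring a word of care is the measurability/integrability bookkeeping needed to split the expectation, i.e.\ confirming that the correction factor is bounded (not merely finite a.s.) so that it can be extracted from $\ex[\cdot]$ without any dominated-convergence or localization argument. That boundedness is immediate from $\v \in \A _{b}$, so the splitting is legitimate and the proof is complete.
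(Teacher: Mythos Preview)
Your proof is correct and follows essentially the same approach as the paper: write $\left( \E{\v }_{t}\right) ^{p}=\E{p\v }_{t}\exp \bigl\{\tfrac12 p(p-1)\int _{0}^{t}|\v _{s}|^{2}\,ds\bigr\}$, bound the second factor using $\v \in \A _{b}$, and use that $\E{p\v }$ is a true martingale (via Novikov) so its expectation equals $1$. The paper's proof is just a more compressed version of what you wrote.
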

\begin{proof}
 By the definition of $\E{\v }$, we have 
 \begin{align*}
  \left( \E{\v }_{t}\right) ^{p}
  =\E{p\v }_{t}\exp \left\{ 
  \frac{1}{2}p(p-1)\int _{0}^{t}|\v _{s}|^{2}\,ds
  \right\} . 
 \end{align*}
 Since the process $\E{p\v }$ is also a martingale by the boundedness of 
 $\v $, we have $\ex \left[ \E{p\v }_{t}\right] =1$ for all $0\le t\le 1$, 
 from which the claimed estimate follows readily. 
\end{proof}

We denote by $\si $ the set of all $\R ^{d}$-valued 
processes given in the form 
\begin{align}\label{;simplep}
 \v _{t}(w)=\xi _{0}\ind _{[t_{0},t_{1}]}(t)
 +\sum _{k=1}^{m-1}\xi _{k}(w)\ind _{(t_{k},t_{k+1}]}(t), 
 \quad 0\le t\le 1,\ w\in \W , 
\end{align}
for some $m\in \N $, $0=t_{0}<t_{1}<\cdots <t_{m}=1$, 
$\xi _{0}\in \R ^{d}$, and $\R ^{d}$-valued 
bounded continuous 
functionals $\xi _{k}(w)=\xi _{k}(w(t),t\le t_{k}),\,w\in \W $,  
$k=1,\ldots ,m-1$. 
We may deduce from \cite[Lemma~II.1.1]{iw} 
that $\si $ is dense in $\A $ 
with respect to the metric $\nt{\cdot }{\A }$ defined by 
\begin{align*}
 \nt{\v }{\A }^{2}:=
 \ex \left[ 
 \int _{0}^{1}|\v _{s}|^{2}\,ds
 \right] , \quad \v \in \A ; 
\end{align*}
see also discussions in \cite[Lemma~3.2.4, Problem~3.2.5]{ks} 
as to the density of $\si $ in $\A _{b}$. 

\subsection{Proof of the lower bound}\label{;ssplb}
In this subsection we give a proof of the lower bound in 
\eqref{;vr1}, namely with the notation above, we prove 

\begin{prop}\label{;plb}
Assume that a measurable function $F:\W \to \R $ satisfies 
\thetag{A1}. Then it holds that 
\begin{align}\label{;vr1l}
 \log \ex \left[ 
 e^{F(W)}
 \right] 
 \ge \sup _{\v \in \A }\left\{ 
 \ex \left[ F\left( 
 \tr{\v }{}(\br )
 \right) 
 \right] -\frac{1}{2}\nt{\v }{\A }^{2}
 \right\} .
\end{align}
\end{prop}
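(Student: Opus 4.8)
The plan is to fix a single $\v\in\A$ and prove
$\log\ex[e^{F(\br)}]\ge\ex[F(\tr{\v}{}(\br))]-\frac{1}{2}\nt{\v}{\A}^{2}$;
the assertion \eqref{;vr1l} then follows by taking the supremum over $\v\in\A$. Throughout, \thetag{A1} is used only to ensure $\ex[e^{F(\br)}]<\infty$, which is what legitimizes the Jensen-type manipulations below and, as a by-product of the argument, makes the right-hand side of \eqref{;vr1l} well defined. I would treat first those $\v$ for which $\E{-\v}$ is a true martingale — this covers $\v\in\si$, and more generally $\v\in\A_{b}$, by Novikov's condition — where a direct change of measure applies, and then reach an arbitrary $\v\in\A$ by approximation, passing to the limit at the level of the laws of the drifted paths rather than pathwise.

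For such a $\v$, applying Girsanov's identity \eqref{;gir} with $-\v$ in place of $\v$ to the nonnegative functional $e^{F}$ yields
$\ex[e^{F(\br)}]=\ex[e^{F(\tr{\v}{}(\br))}\E{-\v}_{1}]=\ex[\exp(F(\tr{\v}{}(\br))+\log\E{-\v}_{1})]$.
Since $e^{x}\ge x\vee 0$ for every real $x$ and the left-hand side is finite, the positive part of the exponent is $\pr$-integrable, so the expectation of the exponent is well defined in $[-\infty,\infty)$ and Jensen's inequality for the convex function $e^{(\cdot)}$ applies under $\pr$; taking logarithms and using
$\ex[\log\E{-\v}_{1}]=-\ex[\int_{0}^{1}\v_{s}\cdot d\br_{s}]-\frac{1}{2}\nt{\v}{\A}^{2}=-\frac{1}{2}\nt{\v}{\A}^{2}$
— the stochastic integral being a true martingale since $\v\in\A$ — one obtains $\log\ex[e^{F(\br)}]\ge\ex[F(\tr{\v}{}(\br))]-\frac{1}{2}\nt{\v}{\A}^{2}$, and in particular $\ex[F_{+}(\tr{\v}{}(\br))]<\infty$. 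Running the same identity and Jensen step with an arbitrary bounded $F$ and then taking the supremum over such $F$, the Gibbs variational characterization of relative entropy shows that the law $\mu_{\v}$ of $\tr{\v}{}(\br)$ under $\pr$ satisfies $H(\mu_{\v}\,|\,\pr)\le\frac{1}{2}\nt{\v}{\A}^{2}$, where $H(\,\cdot\,|\,\pr)$ denotes relative entropy with respect to $\pr$.

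For an arbitrary $\v\in\A$ I would choose $\v^{(n)}\in\si$ with $\nt{\v^{(n)}-\v}{\A}\to 0$ by density of $\si$ in $\A$; then $\tr{\v^{(n)}}{}(\br)\to\tr{\v}{}(\br)$ in $\W$ in $L^{1}(\pr)$, so the laws $\mu_{\v^{(n)}}$ converge weakly to $\mu_{\v}$, and lower semicontinuity of $H(\,\cdot\,|\,\pr)$ combined with the preceding step gives $H(\mu_{\v}\,|\,\pr)\le\liminf_{n}H(\mu_{\v^{(n)}}\,|\,\pr)\le\liminf_{n}\frac{1}{2}\nt{\v^{(n)}}{\A}^{2}=\frac{1}{2}\nt{\v}{\A}^{2}<\infty$; in particular $\mu_{\v}\ll\pr$. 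Finally, the Donsker--Varadhan inequality $\log\ex[e^{F(\br)}]\ge\int F\,d\mu-H(\mu\,|\,\pr)$ — valid for every probability measure $\mu$, meaningful for our $F$ by \thetag{A1}, and being again nothing but Jensen's inequality — applied with $\mu=\mu_{\v}$ and combined with the entropy bound yields the desired inequality, since $\int F\,d\mu_{\v}=\ex[F(\tr{\v}{}(\br))]$; applying it instead with $F_{+}$ in place of $F$ also gives $\ex[F_{+}(\tr{\v}{}(\br))]<\infty$, the well-definedness claimed in \rref{;rtmain1}(2). The main obstacle is precisely this passage to a general $\v$: since $F$ is only measurable, the limit $n\to\infty$ cannot be taken pathwise, and the remedy is to work at the level of the measures $\mu_{\v^{(n)}}$ and control their relative entropies (an $L^{1}$/Scheff\'e-type argument on the corresponding densities would serve the same purpose).
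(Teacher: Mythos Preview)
Your argument is correct and complete. The route you take, however, differs genuinely from the paper's. The paper first establishes the lower bound for \emph{bounded} $F$ (its \lref{;llb}) through a chain of sub-lemmas: the inequality under $\pr^{\v}$ for $\v\in\A_{b}$ (\lref{;llb1}), then for $\v\in\si$ via an explicit inverse transform (\lref{;llb2}), then for general $\v\in\A$ and bounded \emph{continuous} $F$ by approximation in $\nt{\cdot}{\A}$ (\lref{;llb3}), and finally for bounded measurable $F$ by approximating $F$ with bounded continuous functionals and invoking the absolute continuity of $\pr\circ(\tr{\v}{})^{-1}$ with respect to $\pr$. Only then does the paper pass to unbounded $F$ satisfying \thetag{A1} by truncating $F$ from above and below and applying monotone convergence twice.

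You bypass all of this by working directly with the relative entropy: Girsanov plus Jensen under $\pr$ gives the inequality for $\v\in\A_{b}$ in one stroke (no boundedness of $F$ needed), and the Gibbs variational principle turns the same computation into the entropy bound $H(\mu_{\v}\,|\,\pr)\le\tfrac{1}{2}\nt{\v}{\A}^{2}$. Lower semicontinuity of $H(\,\cdot\,|\,\pr)$ under weak convergence then transports this bound to every $\v\in\A$, after which a single application of the Donsker--Varadhan inequality finishes the job for the original unbounded $F$. Your approach is more conceptual and shorter; it isolates the passage to general $\v$ as a fact about the entropy of $\mu_{\v}$, independent of $F$. The paper's approach is more hands-on and has the side benefit of proving \lref{;llb} (the bounded case) as a standalone statement; it also avoids explicitly citing the lower semicontinuity of relative entropy, though the paper itself notes (\rref{;rrelent}) that the entropy viewpoint would serve equally well for the absolute-continuity step.
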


\begin{rem}\label{;rplb}
 In \cite[Theorem~6]{ust}, the lower bound \eqref{;vr1l} is 
 proven under the condition that 
 $
 \left( 1+|F(\br )|\right) e^{F(\br )} \in L^{1}(\pr )
 $. 
\end{rem}

The proof of \pref{;plb} is immediate if we are given the following 
lemma. 

\begin{lem}\label{;llb}
 The lower bound \eqref{;vr1l} holds for any bounded and measurable $F$. 
\end{lem}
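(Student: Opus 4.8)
The plan is to prove the inequality \eqref{;vr1l} first for bounded drifts $\v\in\A_{b}$ by a single change of measure, and then to remove the boundedness of $\v$ by an argument that is insensitive to the (possible) discontinuity of $F$.

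\emph{The bounded case.} Fix a bounded measurable $F$ and $\v\in\A_{b}$. Then $-\v\in\A_{b}$ as well, so by Novikov's condition $\E{-\v}$ is a martingale and $\pr^{-\v}$ is defined; applying the Girsanov identity \eqref{;gir} to $-\v$ with the nonnegative functional $e^{F}$ and recalling that $\E{-\v}_{1}=\exp\bigl(-\int_{0}^{1}\v_{s}\cdot d\br_{s}-\frac{1}{2}\int_{0}^{1}|\v_{s}|^{2}\,ds\bigr)$, we obtain
\begin{align*}
 \ex\left[e^{F(\br)}\right]
 =\ex^{-\v}\!\left[e^{F(\tr{\v}{}(\br))}\right]
 =\ex\!\left[\exp\!\left(F\bigl(\tr{\v}{}(\br)\bigr)-\int_{0}^{1}\v_{s}\cdot d\br_{s}-\frac{1}{2}\int_{0}^{1}|\v_{s}|^{2}\,ds\right)\right].
\end{align*}
Since $\v$ is bounded, the exponent lies in $L^{1}(\pr)$ and $\int_{0}^{\cdot}\v_{s}\cdot d\br_{s}$ is a genuine martingale with zero expectation, so Jensen's inequality gives $\lhs{F}\ge\ex[F(\tr{\v}{}(\br))]-\frac{1}{2}\nt{\v}{\A}^{2}$; that is, \eqref{;vr1l} with the supremum restricted to $\A_{b}$.

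\emph{Removing the boundedness of $\v$.} For $\v\in\A$ let $\mathbb{Q}^{\v}$ denote the law of $\tr{\v}{}(\br)$ under $\pr$. The bounded case, being valid for \emph{every} bounded measurable $F$, together with the dual (Donsker--Varadhan) formula $H(\mathbb{Q}\,|\,\pr)=\sup_{F}\bigl(\ex_{\mathbb{Q}}[F]-\lhs{F}\bigr)$, the supremum being over bounded measurable $F$, shows that $H(\mathbb{Q}^{\v}\,|\,\pr)\le\frac{1}{2}\nt{\v}{\A}^{2}$ for every $\v\in\A_{b}$. Now for arbitrary $\v\in\A$ put $\v^{n}:=\v\,\ind_{\{|\v|\le n\}}\in\A_{b}$; then $\nt{\v^{n}-\v}{\A}\to0$, hence $\tr{\v^{n}}{}(\br)\to\tr{\v}{}(\br)$ in $\W$ in probability, so $\mathbb{Q}^{\v^{n}}\to\mathbb{Q}^{\v}$ weakly, and lower semicontinuity of relative entropy yields $H(\mathbb{Q}^{\v}\,|\,\pr)\le\liminf_{n}H(\mathbb{Q}^{\v^{n}}\,|\,\pr)\le\frac{1}{2}\nt{\v}{\A}^{2}$ (in particular $\mathbb{Q}^{\v}\ll\pr$). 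Finally, the elementary inequality $\lhs{F}\ge\ex_{\mathbb{Q}^{\v}}[F]-H(\mathbb{Q}^{\v}\,|\,\pr)$, valid for any probability measure $\mathbb{Q}^{\v}$ and any bounded measurable $F$ (Jensen once more), combined with $\ex_{\mathbb{Q}^{\v}}[F]=\ex[F(\tr{\v}{}(\br))]$, gives \eqref{;vr1l} after taking the supremum over $\v\in\A$.

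\emph{Main obstacle.} The only genuinely delicate point is the passage from $\A_{b}$ to $\A$: because $F$ is merely measurable, one cannot pass to the limit in $\ex[F(\tr{\v^{n}}{}(\br))]$ directly, even though $\tr{\v^{n}}{}(\br)\to\tr{\v}{}(\br)$ in $\W$ in probability, and the functional $\v\mapsto\ex[F(\tr{\v}{}(\br))]$ need not be upper semicontinuous on $\A$; routing the limit through the $F$-independent entropy bound circumvents this. Alternatively, this entire argument may be bypassed by invoking the lower bound already contained in the Bou\'e--Dupuis representation \eqref{;vr0}, since restricting its supremum over progressively measurable processes to the subclass $\A$ cannot increase it.
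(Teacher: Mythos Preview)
Your proof is correct and takes a route genuinely different from the paper's.

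For $\v\in\A_{b}$, the paper first establishes the ``wrong-side'' bound $\lhs{F}\ge\ex^{\v}\bigl[F(\br)-\tfrac12\int_{0}^{1}|\v_{s}|^{2}\,ds\bigr]$ via Girsanov with $\v$ and the elementary inequality $\log x\ge 1-1/x$ (\lref{;llb1}); it then needs a second step (\lref{;llb2}) that converts this into the desired form by constructing, for each simple $\v\in\si$, an explicit inverse $\widetilde{\v}\in\si$ satisfying $\tr{\v}{}\circ\tr{-\widetilde{\v}}{}=\mathrm{id}$. Your direct use of Girsanov with $-\v$ followed by Jensen accomplishes both steps at once, and for all of $\A_{b}$ rather than only for $\si$.

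For the extension to $\v\in\A$, the paper passes through two approximation layers: it first restricts to \emph{continuous} $F$ and approximates $\v$ by simple processes (\lref{;llb3}), and then approximates a bounded measurable $F$ by bounded continuous $F_{n}$, invoking a Liptser--Shiryaev absolute-continuity result to justify $\ex[F_{n}(\tr{\v}{}(\br))]\to\ex[F(\tr{\v}{}(\br))]$. You instead freeze the functional side by passing through the Donsker--Varadhan dual: from the bounded case you read off $H(\mathbb{Q}^{\v}\,|\,\pr)\le\tfrac12\nt{\v}{\A}^{2}$ for $\v\in\A_{b}$, extend this $F$-free estimate to $\A$ by weak convergence of $\mathbb{Q}^{\v^{n}}$ and lower semicontinuity of entropy, and only then reintroduce $F$ via the dual inequality $\lhs{F}\ge\ex_{\mathbb{Q}^{\v}}[F]-H(\mathbb{Q}^{\v}\,|\,\pr)$. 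This is shorter and, as you correctly identify, neatly sidesteps the discontinuity of $F$; it also yields the absolute continuity $\mathbb{Q}^{\v}\ll\pr$ for free (the paper notes this alternative in \rref{;rrelent} but does not pursue it). The trade-off is that the paper's explicit inverse construction in \lref{;llb2} is reused later for the upper bound (\lref{;J3}), and its argument stays entirely within elementary stochastic calculus without appealing to general properties of relative entropy.
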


Using this lemma, we prove \pref{;plb}. 
\begin{proof}[Proof of \pref{;plb}]
 First we verify that under the assumption \thetag{A1}, 
 \begin{align}\label{;fp}
  \ex \left[ 
  F_{+}\left( \tr{\v }{}(\br )\right) 
  \right] <\infty \quad 
  \text{for any }\v \in \A , 
 \end{align}
 where $F_{+}(w):=F(w)\vee 0,\,w\in \W $. 
 Fix $\v \in \A $ arbitrarily and set  
 $F_{+,M}=F_{+}\wedge M$ for each $M>0$. 
 Then by \lref{;llb}, we have in particular 
 \begin{align*}
  \ex \left[ 
  F_{+,M}\left( \tr{\v }{}(\br )\right) 
  \right] \le \log \ex \left[ 
  e^{F_{+,M}(\br )}\right] +\frac{1}{2}\nt{\v }{\A }^{2}. 
 \end{align*}
 Letting $M\to \infty $, we apply the monotone convergence theorem 
 to both sides to get 
 \begin{align*}
  \ex \left[ 
  F_{+}\left( \tr{\v }{}(\br )\right) 
  \right] &\le \log \ex \left[ 
  e^{F_{+}(\br )}\right] +\frac{1}{2}\nt{\v }{\A }^{2}\\
  &\le \log \ex \left[ 
  1+e^{F_{}(\br )}\right] +\frac{1}{2}\nt{\v }{\A }^{2}, 
 \end{align*}
 which is finite by \thetag{A1}. 
 
 For every $M,N>0$, we now define 
 \begin{align*}
  F_{N}(w):=F(w)\vee (-N), \quad 
  F_{N,M}(w):=F_{N}(w)\wedge M \quad \text{for }w\in \W . 
 \end{align*}
 Then by \lref{;llb}, the lower bound \eqref{;vr1l} 
 holds for $F_{N,M}$. By letting $M\to \infty $, the monotone 
 convergence theorem yields 
 \begin{align}\label{;vr1M}
  \lhs{F_{N}}\ge 
  \sup _{\v \in \A }
  \left\{ 
  \ex \left[ 
  F_{N}\left( 
  \tr{\v }{}(\br )
  \right) 
  \right] 
  -\frac{1}{2}\nt{\v }{\A }^{2}
  \right\} 
 \end{align}
 (cf.~\rref{;rtmain1}\,\thetag{3}). By the assumption \thetag{A1}, 
 the random variable $\sup \limits_{N>0}e^{F_{N}(\br )}$ is integrable 
 and so is $\sup \limits_{N>0}F_{N}\left( \tr{\v }{}(\br )\right) $ 
 for any $\v \in \A $ thanks to \eqref{;fp}. 
 Therefore as $N\to \infty $, we may use the monotone 
 convergence theorem on both sides 
 of \eqref{;vr1M} to obtain 
 \begin{align*}
  \lhs{F}&\ge \inf _{N>0}\sup _{\v \in \A }
  \left\{ 
  \ex \left[ 
  F_{N}\left( 
  \tr{\v }{}(\br )
  \right) 
  \right] 
  -\frac{1}{2}\nt{\v }{\A }^{2}
  \right\} \\
  &\ge \sup _{\v \in \A }\inf _{N>0}
  \left\{ 
  \ex \left[ 
  F_{N}\left( 
  \tr{\v }{}(\br )
  \right) 
  \right] 
  -\frac{1}{2}\nt{\v }{\A }^{2}
  \right\} \\
  &=\sup _{\v \in \A }
  \left\{ 
  \ex \left[ 
  F_{}\left( 
  \tr{\v }{}(\br )
  \right) 
  \right] 
  -\frac{1}{2}\nt{\v }{\A }^{2}
  \right\} , 
 \end{align*}
 which shows the proposition. 
\end{proof}

The statement of \lref{;llb} is the same as what is proven 
in the first half of the proof of Theorem~3.1 in 
Bou\'e-Dupuis \cite{bd}. 
For the self-containedness of  the paper, we give a proof 
of the lemma, which slightly differs from and simplifies the 
original one. 

We begin with the next two lemmas, assertions of which are 
taken respectively from pages 1648 and 1649 of \cite{bd}. 

\begin{lem}\label{;llb1}
Let $F:\W \to \R $ be bounded and measurable. It holds that for any 
$\v \in \A _{b}$, 
\begin{align*}
 \lhs{F}\ge \ex ^{\v }\left[ 
 F(\br )-\frac{1}{2}\int _{0}^{1}|\v _{s}|^{2}\,ds
 \right] . 
\end{align*}
\end{lem}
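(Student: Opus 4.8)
\textbf{Proof proposal for \lref{;llb1}.}

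The plan is to use Girsanov's theorem to change measure and then apply the variational (Gibbs) inequality. Fix $F$ bounded and measurable and $\v \in \A _b$. Since $\v$ is bounded, Novikov's condition holds, so $\E{\v }$ is a true martingale and the measure $\pr ^{\v }$ of \eqref{;dpv} is a genuine probability measure equivalent to $\pr $. The first step is to rewrite the claimed lower bound so that the stochastic integral in $\E{\v }$ is eliminated. Under $\pr ^{\v }$, the process $\br _t - \int_0^t \v_s\,ds = \tr{-\v }{t}(\br )$ is a standard Brownian motion by Girsanov; equivalently, in the notation of \eqref{;gir}, for any nonnegative measurable $\Phi$ on $\W$ we have $\ex ^{\v }[\Phi (\tr{-\v }{}(\br ))] = \ex [\Phi (\br )]$. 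I would like to express $\ex[e^{F(\br )}]$ in terms of $\pr^\v$: writing $\br = \tr{-\v}{}(\br) + \int_0^\cdot \v_s\,ds$ under $\pr^\v$ is awkward because $\v_s$ is a functional of the whole path, so instead I will run the argument the other way, starting from the definition of $\pr^\v$.

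The core computation is as follows. By \eqref{;dpv},
\begin{align*}
 \ex \left[ e^{F(\br )}\right]
 = \ex ^{\v }\!\left[ e^{F(\br )}\bigl( \E{\v }_1\bigr) ^{-1}\right]
 = \ex ^{\v }\!\left[ \exp \left\{ F(\br ) - \int _0^1 \v _s\cdot d\br _s + \frac{1}{2}\int _0^1 |\v _s|^2\,ds\right\} \right] .
\end{align*}
Under $\pr ^{\v }$, the process $\widetilde{\br }_t := \br _t - \int_0^t \v_s\,ds$ is a Brownian motion, and in terms of it the Itô integral rewrites as $\int_0^1 \v_s\cdot d\br_s = \int_0^1 \v_s\cdot d\widetilde{\br }_s + \int_0^1 |\v_s|^2\,ds$, so the exponent becomes $F(\br ) - \int_0^1 \v_s\cdot d\widetilde{\br }_s - \frac12\int_0^1|\v_s|^2\,ds$. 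Hence, applying Jensen's inequality to the convex function $\exp$ with respect to $\pr ^{\v }$,
\begin{align*}
 \log \ex \left[ e^{F(\br )}\right]
 \ge \ex ^{\v }\!\left[ F(\br ) - \int _0^1 \v _s\cdot d\widetilde{\br }_s - \frac{1}{2}\int _0^1 |\v _s|^2\,ds\right] .
\end{align*}
It then remains to check that $\ex ^{\v }\bigl[\int_0^1 \v_s\cdot d\widetilde{\br }_s\bigr] = 0$. Since $\v \in \A_b$, the integrand is bounded and $\{ \calF_t\}$-progressively measurable, so $\int_0^\cdot \v_s\cdot d\widetilde{\br }_s$ is a genuine $\pr ^{\v }$-martingale (its quadratic variation $\int_0^1|\v_s|^2\,ds$ has bounded expectation), hence has zero mean. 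This yields exactly
\begin{align*}
 \log \ex \left[ e^{F(\br )}\right]
 \ge \ex ^{\v }\!\left[ F(\br ) - \frac{1}{2}\int _0^1 |\v _s|^2\,ds\right] ,
\end{align*}
which is the assertion. (The boundedness of $F$ guarantees all these $\pr ^{\v }$-expectations are finite and the manipulations are legitimate; note $\pr ^{\v }$ and $\pr $ are mutually absolutely continuous so "$\pr $-a.e." statements transfer.)

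The main obstacle, such as it is, is purely a matter of bookkeeping: correctly tracking the Girsanov change of the stochastic integral $\int_0^1 \v_s\cdot d\br_s$ into $\int_0^1 \v_s\cdot d\widetilde{\br }_s + \int_0^1|\v_s|^2\,ds$ and confirming that the $d\widetilde{\br }$-integral is a true martingale under $\pr ^{\v }$ so that it drops out under $\ex ^{\v }$. All of this is clean because $\v$ is bounded; no tightness or weak-convergence input is needed for this direction, which is exactly why \cite{bd} obtains it directly. An alternative, essentially equivalent route is to invoke the Gibbs variational principle directly: for the probability measure $\pr ^{\v }$, $\log \ex[e^{F(\br )}] = \log \ex ^{\v }[e^{F(\br )}(d\pr /d\pr ^{\v })] \ge \ex ^{\v }[F(\br )] - H(\pr ^{\v }\,|\,\pr )$, and then identify the relative entropy $H(\pr ^{\v }\,|\,\pr ) = \ex ^{\v }\bigl[\frac12\int_0^1|\v_s|^2\,ds\bigr]$ via the same martingale argument; I would present whichever is shorter, but the hands-on Jensen computation above is self-contained.
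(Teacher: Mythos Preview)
Your proof is correct and follows essentially the same approach as the paper's. The paper writes $\log \ex [e^{F(\br )}]-\ex ^{\v }[F(\br )-\log \E{\v }_{1}]=\ex ^{\v }\bigl[\log \bigl(\ex [e^{F(\br )}]\E{\v }_{1}/e^{F(\br )}\bigr)\bigr]\ge 0$ via the elementary inequality $\log x\ge 1-1/x$ (which is just Jensen in disguise), and then computes $\ex ^{\v }[\log \E{\v }_{1}]=\tfrac12\ex ^{\v }\bigl[\int _0^1|\v _s|^2\,ds\bigr]$ by observing that $\bigl(\int _0^t\v _s\cdot d\br _s-\int _0^t|\v _s|^2\,ds\bigr)\E{\v }_t$ is a $\pr $-martingale---equivalently, that your $\int _0^\cdot \v _s\cdot d\widetilde{\br }_s$ is a $\pr ^{\v }$-martingale.
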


\begin{proof}
It is readily seen that 
\begin{align*}
 \lhs{F}-\ex ^{\v }\left[ 
 F(\br )
 -\log \E{\v }_{1}\right] 
 &=\ex ^{\v }\left[ 
 \log \left( 
 \frac{\ex \left[ e^{F(\br )}\right] \E{\v }_{1}}{e^{F(\br )}}
 \right) 
 \right] \\
 &\ge \ex \left[ 
 \left( 
 1-\frac{e^{F(\br )}}{\ex \left[ e^{F(\br )}\right] \E{\v }_{1}}
 \right) \E{\v }_{1}
 \right] \\
 &=1-1=0. 
\end{align*}
Here for the second line we used the inequality $\log x\ge 1-1/x$ for all 
$x>0$, and the definition \eqref{;dpv} of $\pr ^{\v }$. 
The proof of the lemma ends by noting that 
\begin{align}
 \ex ^{\v }\left[ 
 \log \E{\v }_{1}
 \right] &=
 \ex ^{\v }\left[ 
 \int _{0}^{1}\v _{s}\cdot d\br _{s}-\int _{0}^{1}|\v _{s}|^{2}\,ds
 \right] 
 +\ex ^{\v }\left[ 
 \frac{1}{2}\int _{0}^{1}|\v _{s}|^{2}\,ds
 \right] \notag \\
 &=\ex ^{\v }\left[ 
 \frac{1}{2}\int _{0}^{1}|\v _{s}|^{2}\,ds
 \right] \label{;exlog}
\end{align}
because of the fact that the process 
\begin{align*}
 \left( 
 \int _{0}^{t}\v _{s}\cdot d\br _{s}-\int _{0}^{t}|\v _{s}|^{2}\,ds
 \right) \E{\v }_{t},\quad 0\le t\le 1, 
\end{align*}
is a martingale by It\^o's formula, the boundedness of 
$\v $ and \lref{;ebound}. 
\end{proof}

\begin{lem}\label{;llb2}
Let $F:\W \to \R $ be bounded and measurable. It holds that 
for any $\v \in \si $, 
\begin{align}\label{;simplev}
 \lhs{F}\ge 
 \ex \left[ 
 F(\tr{\v }{}(\br ))
 \right] -\frac{1}{2}\nt{\v }{\A }^{2}. 
\end{align}
\end{lem}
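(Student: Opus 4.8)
The plan is to reduce \eqref{;simplev} to \lref{;llb1} by combining a Girsanov change of measure with a pathwise inversion of the drift transform $\tr{\v}{}$; the latter is available precisely because $\v$ is a simple process. Given $\v\in\si$ written as in \eqref{;simplep}, with partition $0=t_{0}<t_{1}<\cdots<t_{m}=1$ and bounded continuous functionals $\xi_{0}\in\R^{d}$ and $\xi_{k}(w)=\xi_{k}(w(s),s\le t_{k})$ for $k=1,\ldots,m-1$, I would first produce a process $\v'\in\A_{b}$ for which \lref{;llb1}, after the change of measure to $\pr^{\v'}$, delivers exactly \eqref{;simplev} for the original $\v$.

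The construction of $\v'$ rests on inverting $\tr{\v}{}$ path by path: I would solve the equation $\beta+\int_{0}^{\cdot}\v_{s}(\beta)\,ds=w$ for $\beta$ recursively over the partition. On $[0,t_{1}]$ the drift $\v_{s}(\beta)\equiv\xi_{0}$ is a fixed vector, forcing $\beta(t)=w(t)-\xi_{0}t$; and once $\beta|_{[0,t_{k}]}$ has been determined, as a continuous functional of $w|_{[0,t_{k}]}$, the drift on $(t_{k},t_{k+1}]$ is the constant vector $\xi_{k}(\beta(s),s\le t_{k})$, so there $\beta(t)=w(t)-w(t_{k})+\beta(t_{k})-\xi_{k}(\beta(s),s\le t_{k})(t-t_{k})$. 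This yields a continuous map $\Psi\colon\W\to\W$ with $\tr{\v}{}(\Psi(w))=w$ for every $w\in\W$, equivalently $\Psi(w)=w-\int_{0}^{\cdot}\v_{s}(\Psi(w))\,ds$, and such that $\Psi(w)|_{[0,t_{k}]}$ depends only on $w|_{[0,t_{k}]}$. Setting $\v'_{t}(w):=\v_{t}(\Psi(w))$, one checks that on $(t_{k},t_{k+1}]$ this equals $\xi_{k}(\Psi(w)(s),s\le t_{k})$, a bounded functional of $w(s),s\le t_{k}$; hence $\v'$ is progressively measurable and bounded, i.e.\ $\v'\in\A_{b}$ (in fact $\v'\in\si$), so in particular $\E{\v'}$ is a martingale by Novikov's condition and $\pr^{\v'}$ is well-defined.

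I would then apply \lref{;llb1} to $\v'$, obtaining $\lhs{F}\ge\ex^{\v'}\!\left[F(\br)-\frac{1}{2}\int_{0}^{1}|\v'_{s}|^{2}\,ds\right]$. Under $\pr^{\v'}$ the process $\beta:=\tr{-\v'}{}(\br)=\br-\int_{0}^{\cdot}\v'_{s}(\br)\,ds$ is a standard Brownian motion; since $\v'_{s}(\br)=\v_{s}(\Psi(\br))$, evaluating the identity $\Psi(w)=w-\int_{0}^{\cdot}\v_{s}(\Psi(w))\,ds$ at $w=\br$ gives $\beta=\Psi(\br)$, whence $\br=\tr{\v}{}(\beta)$ and $\v'_{s}(\br)=\v_{s}(\beta)$. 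Substituting, the right-hand side above equals $\ex^{\v'}\!\left[F\bigl(\tr{\v}{}(\beta)\bigr)-\frac{1}{2}\int_{0}^{1}|\v_{s}(\beta)|^{2}\,ds\right]$; since the functional $w\mapsto F(\tr{\v}{}(w))-\frac{1}{2}\int_{0}^{1}|\v_{s}(w)|^{2}\,ds$ is bounded ($F$ being bounded and $\v\in\si$) and $\beta$ has law $\pr$ under $\pr^{\v'}$ (cf.\ \eqref{;gir}), this last expectation is $\ex\!\left[F\bigl(\tr{\v}{}(\br)\bigr)\right]-\frac{1}{2}\nt{\v}{\A}^{2}$, which is precisely \eqref{;simplev}. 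The only genuinely non-routine step is the inversion: constructing $\Psi$ and verifying that it is measurable (indeed continuous) and drift-preserving with the correct adaptedness, so that $\v'\in\A_{b}$ and the change of measure to $\pr^{\v'}$ is legitimate; everything else is bookkeeping with Girsanov and the bounded integrand.
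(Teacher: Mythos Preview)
Your proof is correct and follows essentially the same route as the paper's: your auxiliary process $\v'$ coincides with the paper's $\widetilde{\v}$ (both satisfy $\v'(w)=\v(\tr{-\v'}{}(w))$ and $\tr{\v}{}\circ\tr{-\v'}{}=\mathrm{id}$), and both arguments apply \lref{;llb1} to this process and then use Girsanov's formula \eqref{;gir} to identify the resulting $\pr^{\v'}$-expectation with the right-hand side of \eqref{;simplev}. The only cosmetic difference is that you build the inverse map $\Psi$ first and then set $\v':=\v\circ\Psi$, whereas the paper defines $\widetilde{\v}$ directly by the same recursion.
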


\ND\tv{\widetilde{\v}}
\ND\txi{\widetilde{\xi}}
\begin{proof}
 Let $\v \in \si $ is written as \eqref{;simplep}. We construct from 
 $\v $ a process $\tv $ in such a way that for each $w\in \W $,  
 \begin{align*}
  &\txi _{0}:=\xi _{0}, && 
  \tv _{t}(w):=\txi _{0} \ \text{for }t_{0}\le t\le t_{1}, \\
  &\txi _{1}(w):=\xi _{1}
  \left( 
  w(t)-\int _{0}^{t}\tv _{s}(w)\,ds,t\le t_{1}
  \right) , && \tv _{t}(w):=\txi _{1}(w) \ \text{for }t_{1}<t\le t_{2}, \\
  &\cdots && \cdots \\
  &\txi _{m-1}(w):=\xi _{m-1}
  \left( 
  w(t)-\int _{0}^{t}\tv _{s}(w)\,ds,t\le t_{m-1}
  \right) , && \tv _{t}(w):=\txi _{m-1}(w) \ \text{for }t_{m-1}<t\le t_{m}, 
 \end{align*}
 so that we have the relation 
 \begin{align}\label{;compos}
  \tv (w)=\v \left( \tr{-\tv }{}(w)\right) , 
  \quad \tr{\v }{}\circ \tr{-\tv }{}(w)=w
 \end{align}
 for all $w\in \W $. It is clear by construction that 
 $\tv $ is in $\si $, and hence in $\A _{b}$. 
 Therefore by Girsanov's formula \eqref{;gir}, 
 the right-hand side of \eqref{;simplev} is equal to 
 \begin{align*}
  &\ex ^{\tv }\left[ 
  F\left( 
  \tr{\v }{}\circ \tr{-\tv }{}(\br )
  \right) -\frac{1}{2}\int _{0}^{1}
  \left| 
  \v _{s}\left( \tr{-\tv }{}(\br )\right) 
  \right| ^{2}ds
  \right] \\
  &=\ex ^{\tv }\left[ 
  F(\br )-\frac{1}{2}\int _{0}^{1}\left| \tv _{s}\right| ^{2}ds
  \right] , 
 \end{align*}
 where we used \eqref{;compos} for the second line. By \lref{;llb1}, 
 the last expression is dominated by $\lhs{F}$. This ends the proof. 
\end{proof}

Using \lref{;llb2}, we prove 
\begin{lem}\label{;llb3}
Let $F:\W \to \R $ be bounded and continuous. Then \eqref{;simplev} 
holds for any $\v \in \A $. 
\end{lem}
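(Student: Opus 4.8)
The plan is to pass from $\v \in \si$ (for which the inequality \eqref{;simplev} is \lref{;llb2}) to a general $\v \in \A$ by density, exploiting the continuity and boundedness of $F$. Fix $\v \in \A$. Since $\si$ is dense in $\A$ with respect to $\nt{\cdot }{\A }$, choose a sequence $\v ^{(n)}\in \si$ with $\nt{\v ^{(n)}-\v }{\A }\to 0$; after passing to a subsequence we may also assume $\int _{0}^{1}|\v ^{(n)}_{s}-\v _{s}|^{2}\,ds\to 0$ $\pr $-a.s.\ and that the integrands are dominated in an $L^{1}(ds\times d\pr )$ sense, so that in particular $\nt{\v ^{(n)}}{\A }\to \nt{\v }{\A }$. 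For the left-hand side of \eqref{;simplev} there is nothing to do, so everything reduces to showing
\begin{align*}
 \ex \left[ F\left( \tr{\v ^{(n)}}{}(\br )\right) \right]
 \longrightarrow
 \ex \left[ F\left( \tr{\v }{}(\br )\right) \right] .
\end{align*}

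The first key step is a pathwise convergence statement: for $\pr $-a.e.\ $w$,
\begin{align*}
 \sup _{0\le t\le 1}\left| \int _{0}^{t}\v ^{(n)}_{s}(w)\,ds-\int _{0}^{t}\v _{s}(w)\,ds\right|
 \le \int _{0}^{1}\left| \v ^{(n)}_{s}(w)-\v _{s}(w)\right| ds
 \le \left( \int _{0}^{1}\left| \v ^{(n)}_{s}(w)-\v _{s}(w)\right| ^{2}ds\right) ^{1/2}
 \longrightarrow 0
\end{align*}
by Cauchy--Schwarz and the a.s.\ convergence arranged above. Hence $\tr{\v ^{(n)}}{}(\br )\to \tr{\v }{}(\br )$ in $\W $, $\pr $-a.s., and since $F$ is continuous on $\W $ we get $F(\tr{\v ^{(n)}}{}(\br ))\to F(\tr{\v }{}(\br ))$ $\pr $-a.s. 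The second key step is then to upgrade this to convergence of expectations: because $F$ is bounded, the sequence $F(\tr{\v ^{(n)}}{}(\br ))$ is uniformly bounded, so the bounded (dominated) convergence theorem gives the desired convergence of expectations. Passing to the limit in \eqref{;simplev} written for each $\v ^{(n)}$, using $\nt{\v ^{(n)}}{\A }\to \nt{\v }{\A }$ on the right, yields \eqref{;simplev} for $\v $.

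The only delicate point — the part I expect to be the main obstacle — is the extraction of a subsequence along which $\int _{0}^{1}|\v ^{(n)}_{s}-\v _{s}|^{2}\,ds\to 0$ $\pr $-a.s.: this is exactly the standard fact that $L^{2}$-convergence implies a.s.\ convergence of a subsequence, applied on the product space $([0,1]\times \W ,\,ds\times d\pr )$, so $\int _{0}^{1}|\v ^{(n)}_{s}-\v _{s}|^{2}\,ds\to 0$ in $L^{1}(\pr )$ and a further subsequence converges $\pr $-a.s. Since the target inequality \eqref{;simplev} for a fixed $\v $ is unaffected by passing to a subsequence of the approximating $\v ^{(n)}$'s (it is not a statement about convergence of a particular sequence), this subsequence extraction costs nothing. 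No use of weak convergence is needed here; boundedness and continuity of $F$ make the dominated convergence theorem sufficient, which is precisely why this intermediate lemma still requires continuity even though the final Theorem~\ref{;tmain1} will not.
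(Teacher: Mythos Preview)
Your argument is correct and follows essentially the same route as the paper: approximate $\v \in \A$ by simple processes in $\si$, use Cauchy--Schwarz to turn $\nt{\cdot}{\A}$-convergence into uniform-in-$t$ convergence of the drift, extract an a.s.\ convergent subsequence, and then invoke continuity and boundedness of $F$ together with the bounded convergence theorem. The only cosmetic difference is the order of operations: the paper first bounds $\ex\bigl[\,|\int_0^\cdot(\v^n_s-\v_s)\,ds|_{\W}^2\bigr]\le \nt{\v^n-\v}{\A}^2$ and then extracts an a.s.\ subsequence of the $\W$-norm, whereas you extract a subsequence of $\int_0^1|\v^{(n)}_s-\v_s|^2\,ds$ first and then apply Cauchy--Schwarz pathwise. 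Your aside about $L^1(ds\times d\pr)$ domination is unnecessary, since $\nt{\v^{(n)}}{\A}\to\nt{\v}{\A}$ already follows from the triangle inequality for $\nt{\cdot}{\A}$ along the chosen subsequence.
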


\begin{proof}
 Let $\v \in \A $. By the density of $\si $ in $\A $, there exists 
 a sequence $\{ \v ^{n}\} _{n\in \N }\subset \si $ such that 
 $\nt{\v ^{n}-\v }{\A }\to 0$ as $n\to \infty $. Then, since 
 \begin{align*}
  \ex \left[ 
  \left| 
  \int _{0}^{\cdot }\v ^{n}_{s}\,ds-\int _{0}^{\cdot }\v _{s}\,ds
  \right| _{\W }^{2}
  \right] 
  \le \nt{\v ^{n}-\v }{\A }^{2}
  \xrightarrow[n\to \infty ]{}0, 
 \end{align*}
 we may extract a subsequence $\{ n'\} \subset \N $ such that 
 \begin{align}\label{;unicon}
  \left| 
  \int _{0}^{\cdot }\v ^{n'}_{s}\,ds-\int _{0}^{\cdot }\v _{s}\,ds
  \right| _{\W }\xrightarrow[n'\to \infty ]{}0 
  \quad \text{a.s. }
 \end{align}
 Since each $\v ^{n'}$ is in $\si $ and $F$ is assumed to be bounded, 
 we have by \lref{;llb2}, 
 \begin{align*}
  \lhs{F}\ge \ex \left[ 
  F\left( 
  \tr{\v ^{n'}}{}(\br )
  \right) 
  \right] -\frac{1}{2}\nt{\v ^{n'}}{\A }^{2}. 
 \end{align*}
 By \eqref{;unicon} and the continuity of $F$, the bounded convergence 
 theorem yields 
 \begin{align*}
  \lim _{n'\to \infty }\ex \left[ 
  F\left( 
  \tr{\v ^{n'}}{}(\br )
  \right) 
  \right] =\ex \left[ 
  F\left( 
  \tr{\v ^{}}{}(\br )
  \right) 
  \right] . 
 \end{align*}
 As $\v ^{n'}$ approximates $\v $ with respect to 
 $\nt{\cdot }{\A }$, it also holds that 
 $\nt{\v ^{n'}}{\A }\to \nt{\v }{\A }$ as $n'\to \infty $. 
 Combining these ends the proof. 
\end{proof}

\begin{rem}\label{;rweakconv}
 In fact, when $\nt{\v ^{n}-\v }{\A }\to 0$ as $n\to \infty $, 
 the whole sequence $\{ \tr{\v ^{n}}{}(\br )\} _{n\in \N }$ 
 converges weakly to $\tr{\v }{}(\br )$. 
\end{rem}

We stand ready to prove \lref{;llb}. 
\begin{proof}[Proof of \lref{;llb}]
 Let $F:\W \to \R $ be bounded and measurable. Then there exists a 
 sequence $\{ F_{n}\} _{n\in \N }$ of bounded and continuous functions 
 on $\W $ such that 
 \begin{align}\label{;approxf}
  \lim _{n\to \infty }F_{n}=F \quad \text{a.s.} \ \quad  \text{and} \ \quad 
  \sup _{n\in \N }\no{F_{n}}{\infty }\le \no{F}{\infty }
 \end{align}
 as is recalled in \cite[Theorem~2.6]{bd} from 
 \cite[Theorem~V.16\,\thetag{a}]{doob}. Take $\v \in \A $ arbitrarily. 
 Then by \lref{;llb3}, we have for every $n\in \N $, 
 \begin{align*}
  \lhs{F_{n}}\ge \ex \left[ 
  F_{n}\left( 
  \tr{\v ^{}}{}(\br )
  \right) 
  \right] -\frac{1}{2}\nt{\v ^{}}{\A }^{2}. 
 \end{align*}
 The left-hand side converges to 
 $\lhs{F}$ as $n\to \infty $ by \eqref{;approxf} and 
 the bounded convergence theorem. Moreover, we also have 
 \begin{align*}
  \lim _{n\to \infty }\ex \left[ 
  F_{n}\left( 
  \tr{\v ^{}}{}(\br )
  \right) 
  \right] =
  \ex \left[ 
  F\left( 
  \tr{\v ^{}}{}(\br )
  \right) 
  \right] 
 \end{align*}
 since the law $\pr \circ \left( \tr{\v }{}\right) ^{-1}$ 
 is absolutely continuous with respect to $\pr $ 
 (see \cite[Theorem~4]{ls1} and \cite[Theorem~7.4]{ls2}) 
 thanks to $\int _{0}^{1}|\v _{s}|^{2}\,ds<\infty $,  
 $\pr $-a.s. 
 Combining these leads to the conclusion. 
\end{proof}

\begin{rem}\label{;rrelent}
 The above-mentioned absolute continuity may also be inferred from 
 the finiteness of the relative entropy of 
 $\pr \circ \left( \tr{\v }{}\right) ^{-1}$ with respect to $\pr $, 
 shown in equation \thetag{12} of \cite{bd}. 
\end{rem}

\subsection{Proof of the upper bound}\label{;sspub} 
In this subsection we prove the upper bound in 
\eqref{;vr1}:  

\begin{prop}\label{;pub}
Assume that a measurable function $F:\W \to \R $ satisfies 
\thetag{A1} and \thetag{A2}. Then it holds that 
\begin{align}\label{;eqpub}
 \log \ex \left[ 
 e^{F(W)}
 \right] 
 \le \sup _{\v \in \A }\left\{ 
 \ex \left[ F\left( 
 \tr{\v }{}(\br )
 \right) 
 \right] -\frac{1}{2}\nt{\v }{\A }^{2}
 \right\} .
\end{align}
\end{prop}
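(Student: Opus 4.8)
The plan is to derive \pref{;pub} from the Bou\'e--Dupuis representation \eqref{;vr0}, which we may invoke for bounded measurable functionals, by approximating $F$ from below by bounded functionals; assumption \thetag{A2} will be used only to control the error of this approximation, and it is precisely this control that allows the boundedness from below of $F$ to be dropped. One first reduces to the case that $F$ is bounded from above. For each $M>0$ the functional $F\wedge M$ still satisfies \thetag{A1}, and $(F\wedge M)_{-}=F_{-}$ when $M\ge 0$, so it also satisfies \thetag{A2}; moreover, since $F\wedge M\le F$, for every $\v \in \A $ we have $\ex\left[ (F\wedge M)\left( \tr{\v }{}(\br )\right) \right] \le \ex\left[ F\left( \tr{\v }{}(\br )\right) \right] $, the right-hand side being well defined in $[-\infty ,\infty )$ by \thetag{A1} and \rref{;rtmain1}\,\thetag{2}; hence the right-hand side of \eqref{;eqpub} for $F\wedge M$ is dominated by that for $F$. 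As $e^{(F\wedge M)(\br )}\uparrow e^{F(\br )}\in L^{1}(\pr )$, dominated convergence gives $\lhs{F\wedge M}\to \lhs{F}$ as $M\to \infty $. Therefore \eqref{;eqpub} for all $F\wedge M$ entails \eqref{;eqpub} for $F$, and we may and do assume henceforth that $F\le M_{0}$ for some constant $M_{0}\ge 0$.

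For $N\in \N $ put $F^{(N)}:=F\vee (-N)$, which is bounded and measurable; applying \eqref{;vr0} to $F^{(N)}$ — and noting that a near-optimizing process necessarily has finite energy, so that the supremum is unchanged if taken over $\A $ — we get
\begin{align*}
 \lhs{F^{(N)}}=\sup _{\v \in \A }\left\{
 \ex\left[ F^{(N)}\left( \tr{\v }{}(\br )\right) \right]
 -\frac{1}{2}\nt{\v }{\A }^{2}
 \right\} .
\end{align*}
Here $F^{(N)}\ge F$ gives $\lhs{F^{(N)}}\ge \lhs{F}$, while $e^{F^{(N)}(\br )}\downarrow e^{F(\br )}$ under the bound $e^{M_{0}}$, so $\lhs{F^{(N)}}\to \lhs{F}$ as $N\to \infty $. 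For each $N$ choose $\v ^{N}\in \A $ with $\ex\left[ F^{(N)}\left( \tr{\v ^{N}}{}(\br )\right) \right] -\frac{1}{2}\nt{\v ^{N}}{\A }^{2}\ge \lhs{F^{(N)}}-1/N$. Since $F^{(N)}_{+}=F_{+}\le M_{0}$, rearranging this inequality yields $\frac{1}{2}\nt{\v ^{N}}{\A }^{2}\le M_{0}-\lhs{F^{(N)}}+1/N\le M_{0}-\lhs{F}+1$, so in particular $\sup _{N}\nt{\v ^{N}}{\A }<\infty $.

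It remains to pass to the limit. Since $F^{(N)}_{-}=F_{-}\wedge N$, for every $N$
\begin{align*}
 \ex\left[ F^{(N)}\left( \tr{\v ^{N}}{}(\br )\right) \right]
 -\ex\left[ F\left( \tr{\v ^{N}}{}(\br )\right) \right]
 &=\ex\left[ \left( F_{-}\left( \tr{\v ^{N}}{}(\br )\right) -N\right) \vee 0\right] \\
 &\le \ex\left[ F_{-}\left( \tr{\v ^{N}}{}(\br )\right) \ind _{\{ F_{-}(\tr{\v ^{N}}{}(\br ))>N\} }\right] .
\end{align*}
Thus, once the family $\{ F_{-}(\tr{\v ^{N}}{}(\br ))\} _{N\in \N }$ is known to be uniformly integrable under $\pr $ — which in particular renders every $F_{-}(\tr{\v ^{N}}{}(\br ))$, hence every $\ex\left[ F(\tr{\v ^{N}}{}(\br ))\right] $, finite — this difference tends to $0$ as $N\to \infty $; feeding this into the near-optimality inequality then gives $\sup _{\v \in \A }\{ \ex[ F(\tr{\v }{}(\br ))] -\frac{1}{2}\nt{\v }{\A }^{2}\} \ge \lhs{F}$, which is \eqref{;eqpub}. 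This uniform integrability is the heart of the matter, and the point where \thetag{A2} with its exponent $1+\delta >1$ is genuinely needed; I regard it as the content of the key \pref{;pkey}. With the uniform energy bound $\sup _{N}\nt{\v ^{N}}{\A }<\infty $ available, the laws $\pr \circ (\tr{\v ^{N}}{})^{-1}$ — mutually absolutely continuous with $\pr $ — can be controlled through Girsanov's formula and the uniform finiteness of their relative entropies, and pairing this with the moment bound $\ex[ F_{-}(\br )^{1+\delta }] <\infty $ by H\"older's inequality should produce a uniform estimate $\sup _{N}\ex[ F_{-}(\tr{\v ^{N}}{}(\br ))^{1+\delta '}] <\infty $ for some $\delta '\in (0,\delta )$, whence uniform integrability via the de la Vall\'ee-Poussin criterion. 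It is here that $L^{1}$-convergence tools such as Scheff\'e's lemma take the place of the weak-convergence arguments of \cite{bd}; the details, together with the variant \pref{;lu} announced earlier, are carried out in \ssref{;prfkey}.
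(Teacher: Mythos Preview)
Your reduction to the case $F\le M_{0}$ and the choice of near-optimal controls $\v ^{N}\in \A $ for the bounded truncations $F^{(N)}=F\vee (-N)$ are fine, as is the energy bound $\sup _{N}\nt{\v ^{N}}{\A }<\infty $. The gap is in the step you yourself flag as ``the heart of the matter'': the uniform integrability of $\{ F_{-}(\tr{\v ^{N}}{}(\br ))\} _{N}$. Your sketched argument does not go through. A uniform bound on $\nt{\v ^{N}}{\A }$ yields only a uniform bound on the relative entropies $H\bigl( \pr \circ (\tr{\v ^{N}}{})^{-1}\,\big| \,\pr \bigr) $, which in turn controls the Radon--Nikodym densities merely in the Orlicz class $L\log L$, not in any $L^{q}(\pr )$ with $q>1$. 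Consequently H\"older's inequality cannot be paired with the hypothesis $F_{-}\in L^{1+\delta }(\pr )$ to produce $\sup _{N}\ex \bigl[ F_{-}(\tr{\v ^{N}}{}(\br ))^{1+\delta '}\bigr] <\infty $; the integrability of $F_{-}$ under $\pr $ simply does not transfer to the shifted measures via an entropy bound alone. Nothing in the paper supplies this missing estimate either: \pref{;pkey} is \emph{not} a uniform integrability result. It is the inequality
\[
 \ex ^{\v }\!\left[ F(\br )-\tfrac{1}{2}\!\int _{0}^{1}|\v _{s}|^{2}\,ds\right]
 \le \sup _{\bar \v \in \si }\left\{ \ex \bigl[ F(\tr{\bar \v }{}(\br ))\bigr] -\tfrac{1}{2}\nt{\bar \v }{\A }^{2}\right\}
\]
for $\v \in \A _{b}$, proved by approximating $\v $ in $\A $ by simple processes and inverting the Girsanov shift for those (Scheff\'e's lemma enters in showing $\E{\v ^{n}}_{1}\to \E{\v }_{1}$ in $L^{1}$).

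The paper's route is genuinely different from yours. After reducing to $F$ bounded above by $M$, it does \emph{not} truncate from below and invoke \eqref{;vr0}; instead it approximates $F$ in $L^{1}(\pr )$ by smooth cylindrical $F_{n}\le M$ and uses the Clark--Ocone formula to obtain $\v ^{n}\in \A _{b}$ with $\E{\v ^{n}}_{1}=e^{F_{n}(\br )}/\ex [e^{F_{n}(\br )}]$ exactly. The point is that this density is then \emph{pointwise} bounded (by roughly $e^{M-\ex [F(\br )]}$), far stronger than any $L^{q}$ or entropy control, and this is what makes the error terms $I^{1}_{n},I^{2}_{n}$ vanish. The remaining main term $I^{3}_{n}=\ex ^{\v ^{n}}[F(\br )-\tfrac{1}{2}\int |\v ^{n}_{s}|^{2}\,ds]$ is handled by \pref{;pkey}. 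In short, the key idea you are missing is to manufacture controls whose Girsanov density is pointwise bounded (via Clark--Ocone), rather than trying to squeeze uniform integrability out of an energy bound on arbitrary near-optimizers.
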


Using the notion of filtration introduced by \"Ust\"unel and Zakai \cite{uz} 
on an abstract Wiener space, Zhang \cite{zha} extended the variational 
representation \eqref{;vr0} of Bou\'e-Dupuis for bounded Wiener 
functionals to the framework of abstract Wiener spaces 
as simplifying considerably the original proof of the upper 
bound by employing the Clark-Ocone formula. We also 
make use of the Clark-Ocone formula to prove \pref{;pub}. 
 
First we prove \eqref{;eqpub} in the case that 
$F$ satisfies \thetag{A2} and is bounded from above: 
\begin{align}\label{;defM}
 M\equiv M_{F}:=\esssup _{w\in \W }F(w)<\infty . 
\end{align}
We denote by $\calF C_{b}^{1}$ the set of all functionals on $\W $ of the form 
\begin{align}\label{;cylinder}
 f\left( w(t_{1}),\ldots ,w(t_{m})\right) ,\quad w\in \W ,  
\end{align}
for some $m\in \N $, $0\le t_{1}<\cdots <t_{m}\le 1$ and for 
some bounded $C^{1}$-function $f:(\R ^{d})^{m}\to \R $ whose partial 
derivatives are all bounded as well. Since $F$ is in $L^{1}(\pr )$ by 
the assumption \thetag{A2} and \eqref{;defM}, we may find a sequence 
$\{ F_{n}\} _{n\in \N }\subset \calF C_{b}^{1}$ such that 
\begin{align}\label{;l1conv}
 \lim _{n\to \infty }\ex 
 \left[ 
 \left| 
 F_{n}(\br )-F(\br )
 \right| 
 \right] =0. 
\end{align}
Truncating $F_{n}$ if necessary, we may moreover assume that 
\begin{align}\label{;unifbd}
 \sup _{n\in \N }\sup _{w\in \W }F_{n}(w)\le M. 
\end{align}
We fix such a sequence $\{ F_{n}\} _{n\in \N }$. 
The following lemma is immediate from the Clark-Ocone formula 
and It\^o's formula. 

\begin{lem}\label{;co}
 For each $n\in \N $, there exists $\v ^{n}\in \A _{b}$ such that 
 \begin{align}\label{;coid}
 \frac{\ex \left[ e^{F_{n}(\br )}\big| \calF _{t}\right] }
 {\ex \left[ e^{F_{n}(\br )}\right] }
 =\E{\v ^{n}}_{t}\quad \text{a.s.} 
 \end{align}
 for all $0\le t\le 1$. 
\end{lem}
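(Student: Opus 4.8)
The plan is to establish \lref{;co} by combining the Clark--Ocone representation of the positive martingale $N_{t}:=\ex[e^{F_{n}(\br )}\mid \calF _{t}]$ with It\^o's formula to pass to its logarithmic derivative. First I would fix $n$ and note that, since $F_{n}\in \calF C_{b}^{1}$, the random variable $e^{F_{n}(\br )}$ is bounded above by $e^{M}$ and bounded below by a positive constant (because $F_{n}$, being a bounded $C^{1}$-functional, is bounded below as well); in particular $e^{F_{n}(\br )}$ lies in $\mathbb{D}^{1,2}$ (indeed in every $\mathbb{D}^{1,p}$), so the Clark--Ocone formula applies and yields an $\{\calF _{t}\}$-progressively measurable, square-integrable process $\{u^{n}_{t}\}$ with
\begin{align*}
 e^{F_{n}(\br )}=\ex\!\left[e^{F_{n}(\br )}\right]+\int _{0}^{1}u^{n}_{s}\cdot d\br _{s},
\end{align*}
hence $N_{t}=\ex[e^{F_{n}(\br )}]+\int _{0}^{t}u^{n}_{s}\cdot d\br _{s}$.

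Next I would set $\v ^{n}_{t}:=u^{n}_{t}/N_{t}$, which is well defined because $N_{t}\ge c>0$ for the same lower bound $c$ on $e^{F_{n}}$, and is $\{\calF _{t}\}$-progressively measurable. Applying It\^o's formula to $\log N_{t}$ gives
\begin{align*}
 \log N_{t}=\log \ex\!\left[e^{F_{n}(\br )}\right]
 +\int _{0}^{t}\v ^{n}_{s}\cdot d\br _{s}-\frac{1}{2}\int _{0}^{t}|\v ^{n}_{s}|^{2}\,ds,
\end{align*}
and exponentiating yields exactly $N_{t}/\ex[e^{F_{n}(\br )}]=\E{\v ^{n}}_{t}$, which is \eqref{;coid}.

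It remains to check $\v ^{n}\in \A _{b}$, i.e.\ that $\sup _{t}\no{|\v ^{n}_{t}|}{\infty }<\infty $. This is the step I expect to be the main obstacle, since the Clark--Ocone integrand $u^{n}$ is only a priori square-integrable, and a crude bound $|\v ^{n}_{t}|\le |u^{n}_{t}|/c$ does not give boundedness. The way around it is to use the smoothness of $F_{n}$ more carefully: for a cylindrical functional $F_{n}=f(\br _{t_{1}},\ldots ,\br _{t_{m}})$ with $f$ and its first derivatives bounded, the Malliavin derivative $D_{t}e^{F_{n}(\br )}=e^{F_{n}(\br )}\sum _{j:\,t_{j}\ge t}\partial _{j}f(\br _{t_{1}},\ldots ,\br _{t_{m}})$ is bounded uniformly in $(t,w)$, say by $K_{n}$; by the Clark--Ocone formula $u^{n}_{t}=\ex[D_{t}e^{F_{n}(\br )}\mid \calF _{t}]$, so $|u^{n}_{t}|\le K_{n}$ a.s., and therefore $|\v ^{n}_{t}|\le K_{n}/c<\infty $ uniformly. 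This gives $\v ^{n}\in \A _{b}$ and completes the proof; the only care needed is to record that the constants $c$ and $K_{n}$ depend on $n$, which is harmless since the lemma is stated for each fixed $n$.
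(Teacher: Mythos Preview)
Your proposal is correct and follows exactly the route the paper indicates: the paper simply says the lemma is ``immediate from the Clark--Ocone formula and It\^o's formula'' and then records the explicit expression
\[
 \v ^{n}_{t}=\sum _{k=1}^{m}\ind _{[0,t_{k}]}(t)\,
 \frac{\ex \left[ e^{F_{n}(\br )}\nabla _{x^{k}}f(\br (t_{1}),\ldots ,\br (t_{m}))\,\big|\,\calF _{t}\right]}
 {\ex \left[ e^{F_{n}(\br )}\,\big|\,\calF _{t}\right]},
\]
which is precisely your $u^{n}_{t}/N_{t}$ with $u^{n}_{t}=\ex[D_{t}e^{F_{n}(\br )}\mid \calF _{t}]$. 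Your added justification that $\v ^{n}\in \A _{b}$---via the uniform bound on $D_{t}e^{F_{n}(\br )}$ for cylindrical $F_{n}$ together with the strictly positive lower bound on $N_{t}$---is the standard way to see boundedness and matches the explicit formula above, so nothing is missing.
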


In fact, if $F_{n}$ is written as \eqref{;cylinder}, then the claimed 
$\v ^{n}$ admits the expression 
\begin{align*}
 \v ^{n}_{t}=
 \sum _{k=1}^{m}\ind _{[0,t_{k}]}(t)
 \frac{\ex \left[
 e^{F_{n}(\br )}\nabla _{x^{k}}f(\br (t_{1}),\ldots ,\br (t_{m}))
 \big| \calF _{t}
 \right] }
 {\ex \left[ e^{F_{n}(\br )}\big| \calF _{t}\right] }
 \quad \text{a.s.} 
\end{align*}
for all $0\le t\le 1$. For the Clark-Ocone formula, we 
refer the reader to 
\cite[Appendix~E]{ksF} and \cite[Proposition~1.3.14]{nua}. 

\begin{lem}\label{;relent}
 Let $\{ \v ^{n}\} _{n\in \N }\subset \A _{b}$ be as given in \lref{;co}. 
 Then for each $n\in \N $, we have 
 \begin{align*}
  \lhs{F_{n}}=\ex ^{\v ^{n}}\left[ 
  F_{n}(\br )-\frac{1}{2}\int _{0}^{1}\left| 
  \v ^{n}_{s}
  \right| ^{2}ds
  \right] . 
 \end{align*}
\end{lem}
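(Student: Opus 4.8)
The plan is to extract the identity directly from the Clark--Ocone representation \eqref{;coid}, by passing to $t=1$, taking logarithms, and integrating against $\pr ^{\v ^{n}}$.

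First I would specialize \eqref{;coid} to $t=1$. Since $F_{n}$ is a functional of $\bigl( \br (t_{1}),\ldots ,\br (t_{m})\bigr) $ with all $t_{k}\le 1$, it is $\calF _{1}$-measurable, so $\ex \left[ e^{F_{n}(\br )}\big| \calF _{1}\right] =e^{F_{n}(\br )}$ a.s., and \eqref{;coid} becomes
\[
  \frac{e^{F_{n}(\br )}}{\ex \left[ e^{F_{n}(\br )}\right] }=\E{\v ^{n}}_{1}\quad \text{a.s.},
\]
where the denominator is a finite positive constant because $F_{n}$ is bounded (so $0<\ex [e^{F_{n}(\br )}]<\infty $). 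Taking logarithms yields the a.s.\ identity
\[
  F_{n}(\br )=\lhs{F_{n}}+\log \E{\v ^{n}}_{1}.
\]

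Next I would integrate this against $\pr ^{\v ^{n}}$. Here $\pr ^{\v ^{n}}$ is a genuine probability measure since $\v ^{n}\in \A _{b}$ forces $\E{\v ^{n}}$ to be a true martingale, and the displayed a.s.\ identity persists $\pr ^{\v ^{n}}$-a.s.\ because $\pr ^{\v ^{n}}\ll \pr $ by the definition \eqref{;dpv}. Both $F_{n}(\br )$ (bounded) and $\log \E{\v ^{n}}_{1}$ (a stochastic integral plus a bounded term) lie in $L^{1}(\pr ^{\v ^{n}})$, so one may integrate term by term to get
\[
  \ex ^{\v ^{n}}\left[ F_{n}(\br )\right] =\lhs{F_{n}}+\ex ^{\v ^{n}}\left[ \log \E{\v ^{n}}_{1}\right] .
\]
Finally I would invoke the computation \eqref{;exlog} carried out in the proof of \lref{;llb1}, which is available here because $\v ^{n}\in \A _{b}$ (the relevant process is a martingale by It\^o's formula and \lref{;ebound}), and which gives $\ex ^{\v ^{n}}\left[ \log \E{\v ^{n}}_{1}\right] =\ex ^{\v ^{n}}\left[ \tfrac{1}{2}\int _{0}^{1}|\v ^{n}_{s}|^{2}\,ds\right] $. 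Substituting and rearranging yields the asserted equality.

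I do not anticipate a genuine obstacle: every step is licensed by the two boundedness facts---$F_{n}$ bounded and $\v ^{n}\in \A _{b}$---which secure the martingale property of $\E{\v ^{n}}$, the positivity and finiteness of $\ex [e^{F_{n}(\br )}]$, the $L^{1}(\pr ^{\v ^{n}})$-integrability needed to split the expectation, and the applicability of \eqref{;exlog}. The one place to pause is the transfer of the a.s.\ identity from $\pr $ to $\pr ^{\v ^{n}}$, but this is immediate from $\pr ^{\v ^{n}}\ll \pr $.
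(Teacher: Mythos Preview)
Your proposal is correct and follows exactly the same route as the paper's proof: specialize \eqref{;coid} to $t=1$, take logarithms to obtain $\lhs{F_{n}}=F_{n}(\br )-\log \E{\v ^{n}}_{1}$ a.s., take the $\pr ^{\v ^{n}}$-expectation, and invoke \eqref{;exlog}. You have simply spelled out in more detail the justifications (measurability, absolute continuity, integrability) that the paper leaves implicit.
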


\begin{proof}
 When $t=1$ we rewrite \eqref{;coid} in such a way that 
 \begin{align*}
  \lhs{F_{n}}=F_{n}(\br )-\log \E{\v ^{n}}_{1}\quad 
  \text{$\pr $-a.s.} 
 \end{align*}
 Taking the $\pr ^{\v ^{n}}$-expectation on the right-hand side and recalling 
 the identity \eqref{;exlog}, we have the lemma. 
\end{proof}

Using this lemma, we divide the left-hand side of 
\eqref{;eqpub} into three parts  
\begin{align}\label{;decompI}
 \lhs{F}=I^{1}_{n}+I^{2}_{n}+I^{3}_{n}
\end{align}
for each $n\in \N $, where we set 
\begin{align*}
 I^{1}_{n}&=\lhs{F}-\lhs{F_{n}}, \\
 I^{2}_{n}&=\ex ^{\v _{n}}\left[ 
 F_{n}(\br )-F(\br )
 \right] , \\
 I^{3}_{n}&=\ex ^{\v _{n}}\left[ 
 F(\br )-\frac{1}{2}\int _{0}^{1}\left| 
 \v ^{n}_{s}
 \right| ^{2}ds
 \right] . 
\end{align*}
Note that this decomposition makes sense because 
H\"older's inequality yields  
\begin{align*}
 \ex \left[ 
 F_{-}(\br )\E{\v ^{n}}_{1}
 \right] 
 \le \no{F_{-}(\br )}{1+\delta }\no{\E{v^{n}}_{1}}{1+1/\delta }<\infty 
\end{align*}
by the assumption \thetag{A2} and \lref{;ebound}. 

\begin{lem}\label{;convI}
 We have 
 \begin{align}\label{;convI12}
  \lim _{n\to \infty }I^{i}_{n}=0, \quad i=1,2. 
 \end{align}
\end{lem}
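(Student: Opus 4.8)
The goal is to show $I^1_n\to 0$ and $I^2_n\to 0$ as $n\to\infty$, where the approximating sequence $\{F_n\}$ was chosen so that $F_n(W)\to F(W)$ in $L^1(\pr)$ and $\sup_n\sup_w F_n(w)\le M$.

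\medskip
\noindent\textbf{Proof plan.} For $I^1_n=\lhs{F}-\lhs{F_n}$, the plan is to show $\ex[e^{F_n(\br)}]\to\ex[e^{F(\br)}]$ and then pass to the logarithm. Since $F_n\le M$ and $F\le M$, we have $e^{F_n(\br)}\le e^M$ and $e^{F(\br)}\le e^M$, so the exponentials are uniformly bounded. From $F_n(\br)\to F(\br)$ in $L^1(\pr)$ we may extract a subsequence converging $\pr$-a.s.; along it $e^{F_n(\br)}\to e^{F(\br)}$ a.s., and by the bounded convergence theorem (dominating function $e^M$) the expectations converge along that subsequence. A standard subsequence-of-every-subsequence argument upgrades this to convergence of the full sequence $\ex[e^{F_n(\br)}]\to\ex[e^{F(\br)}]$; since by (A1) the limit $\ex[e^{F(\br)}]$ is a finite positive number (positive because $e^{F(\br)}>0$ a.s.), continuity of $\log$ gives $I^1_n\to 0$.

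\medskip
For $I^2_n=\ex^{\v^n}[F_n(\br)-F(\br)]$, the point is to rewrite this under $\pr$ via the density $\E{\v^n}_1$ and control it uniformly. By \lref{;co},
\begin{align*}
 I^2_n=\ex\!\left[(F_n(\br)-F(\br))\E{\v^n}_1\right]
 =\frac{\ex\!\left[(F_n(\br)-F(\br))\,e^{F_n(\br)}\right]}{\ex\!\left[e^{F_n(\br)}\right]}.
\end{align*}
The denominator converges to $\ex[e^{F(\br)}]\in(0,\infty)$ by the argument just given, so it suffices to show the numerator tends to $0$. Here $e^{F_n(\br)}\le e^M$, hence
\begin{align*}
 \left|\ex\!\left[(F_n(\br)-F(\br))\,e^{F_n(\br)}\right]\right|
 \le e^M\,\ex\!\left[\,|F_n(\br)-F(\br)|\,\right]\xrightarrow[n\to\infty]{}0
\end{align*}
by \eqref{;l1conv}. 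This gives $I^2_n\to 0$.

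\medskip
\noindent\textbf{Main obstacle.} The only subtle point is the convergence $\ex[e^{F_n(\br)}]\to\ex[e^{F(\br)}]$: $L^1$-convergence of $F_n(\br)$ does not by itself pass through the nonlinear map $x\mapsto e^x$, so one genuinely needs the two-sided bound — the uniform upper bound $F_n\le M$ (and $F\le M$) to dominate the exponentials, together with a.s. convergence along a subsequence — before invoking bounded convergence, and then the subsequence principle to recover convergence of the whole sequence. Everything else is a routine estimate using the uniform bound $e^{F_n}\le e^M$ and \eqref{;l1conv}. (Note this is the step where boundedness of $F$ from above, i.e. \eqref{;defM}, is used; the general case will be handled afterward by a truncation argument.)
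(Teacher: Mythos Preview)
Your proof is correct; the overall strategy (uniform bound $e^{F_n}\le e^M$, $L^1$-convergence \eqref{;l1conv}, and the explicit density $\E{\v^n}_1=e^{F_n(\br)}/\ex[e^{F_n(\br)}]$ from \lref{;co}) matches the paper's. The technical executions differ slightly. For $I^1_n$, the paper avoids your subsequence-of-every-subsequence manoeuvre by using the elementary Lipschitz bound $|e^{z_1}-e^{z_2}|\le e^M|z_1-z_2|$ for $z_1,z_2\le M$, which immediately gives
\[
\bigl|\ex[e^{F(\br)}]-\ex[e^{F_n(\br)}]\bigr|\le e^M\,\ex\bigl[|F(\br)-F_n(\br)|\bigr]\xrightarrow[n\to\infty]{}0;
\]
this makes your ``main obstacle'' disappear in one line. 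For $I^2_n$, the paper bounds the density $\E{\v^n}_1$ from above via Jensen's inequality ($\ex[e^{F_n(\br)}]\ge e^{\ex[F(\br)]-\ve}$ for large $n$), whereas you instead reuse the convergence of $\ex[e^{F_n(\br)}]$ established for $I^1_n$ to control the denominator; your route is arguably tidier here, since it recycles the first part.
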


\begin{proof}
 Since the function $\R \ni z\mapsto e^{z}$ is increasing and 
 convex, we have 
 $\left| e^{z_{1}}-e^{z_{2}}\right| \le e^{M}|z_{1}-z_{2}|$ 
 for any $z_{1},z_{2}\le M$, and hence by 
 \eqref{;defM} and \eqref{;unifbd}, 
 \begin{align*}
  \left| \ex \bigl[ e^{F_{}(\br )}\bigr] -\ex \bigl[ e^{F_{n}(\br )}\bigr] \right| 
  \le e^{M}\ex \left[ \left| F(\br )-F_{n}(\br )\right| \right] 
  %%\xrightarrow[n\to \infty ]{}0.  
 \end{align*}
 for all $n\in \N $. This implies \eqref{;convI12} for $i=1$ 
 by \eqref{;l1conv}. 
 
 As for $I^{2}_{n}$, we fix an $\ve >0$. By \eqref{;l1conv}, for 
 all sufficiently large $n$, 
 \begin{align*}
  \ex \left[ 
  F_{n}(\br )
  \right] \ge 
  \ex \left[ 
  F_{}(\br )
  \right] -\ve , 
 \end{align*}
 hence by Jensen's inequality, 
 \begin{align*}
  \ex \left[ 
  e^{F_{n}(\br )}
  \right] \ge \exp \left( 
  \ex \left[ F(\br )\right] -\ve 
  \right) . 
 \end{align*}
 By this estimate, \lref{;co} and \eqref{;unifbd}, we have 
 \begin{align*}
  \E{\v ^{n}}_{1}&=\frac{e^{F_{n}(\br )}}
  {\ex \left[ e^{F_{n}(\br )}\right] }\\
  &\le \exp \left( M+\ve -\ex \left[ F(\br )\right] \right) 
 \end{align*}
 if $n$ is sufficiently large. Then by the definition of \eqref{;dpv} of 
 $\pr ^{\v ^{n}}$, 
 \begin{align*}
  \left| I^{2}_{n}\right| 
  \le \exp \left( M+\ve -\ex \left[ F(\br )\right] \right) 
  \ex 
 \left[ 
 \left| 
 F_{n}(\br )-F(\br )
 \right| 
 \right] , 
 \end{align*}
 which tends to 0 as $n\to \infty $ by \eqref{;l1conv}. The proof is 
 complete. 
\end{proof}

As for $I^{3}_{n}$, we have the estimate 
\begin{align}\label{;key}
 \sup _{n\in \N }I^{3}_{n}\le 
 \sup _{\v \in \A }\left\{ 
 \ex \left[ 
 F\left( \tr{\v }{}(\br )\right) 
 \right] -\frac{1}{2}\nt{\v }{\A }^{2}
 \right\} , 
\end{align}
proof of which is postponed to \ssref{;prfkey}. 
Putting \eqref{;decompI}, \eqref{;convI12} and \eqref{;key} together, 
we have now arrived at 
\begin{prop}\label{;pubba}
 The upper bound \eqref{;eqpub} holds for any measurable 
 function $F:\W \to \R $ that satisfies \thetag{A2} 
 and is bounded from above. 
\end{prop}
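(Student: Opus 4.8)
The plan is to let $n\to\infty$ in the decomposition \eqref{;decompI} and read off \eqref{;eqpub}. Fix a measurable $F:\W\to\R$ that satisfies \thetag{A2} and is bounded above, with $M=M_{F}<\infty$ as in \eqref{;defM}; fix also the approximating sequence $\{F_{n}\}_{n\in\N}\subset\calF C_{b}^{1}$ of \eqref{;l1conv}--\eqref{;unifbd} and the processes $\v^{n}\in\A_{b}$ produced by \lref{;co}. As recorded just after \eqref{;decompI}, the splitting $\lhs{F}=I^{1}_{n}+I^{2}_{n}+I^{3}_{n}$ is legitimate for each $n$, since H\"older's inequality together with \thetag{A2} and \lref{;ebound} gives $\ex[F_{-}(\br)\E{\v^{n}}_{1}]<\infty$, so that $\ex^{\v^{n}}[F(\br)]$, and hence $I^{2}_{n}$ and $I^{3}_{n}$, are well defined.

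The core of the argument is that $\lhs{F}$ is a \emph{finite constant} independent of $n$ (it is at most $M$ since $F\le M$, and finite below by Jensen's inequality and \thetag{A2}). Rearranging the decomposition gives $I^{3}_{n}=\lhs{F}-I^{1}_{n}-I^{2}_{n}$ for every $n$; by \lref{;convI} the right-hand side tends to $\lhs{F}$ as $n\to\infty$, so $I^{3}_{n}\to\lhs{F}$ and in particular $\lhs{F}\le\sup_{n\in\N}I^{3}_{n}$. Now feed in the estimate \eqref{;key}, which bounds $\sup_{n\in\N}I^{3}_{n}$ above by $\sup_{\v\in\A}\{\ex[F(\tr{\v}{}(\br))]-\tfrac12\nt{\v}{\A}^{2}\}$. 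Chaining the two yields $\lhs{F}\le\sup_{\v\in\A}\{\ex[F(\tr{\v}{}(\br))]-\tfrac12\nt{\v}{\A}^{2}\}$, which is precisely \eqref{;eqpub}, proving \pref{;pubba}.

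Thus the proof of the proposition itself is a soft limiting step; all the real content has been pushed into \lref{;convI} and, above all, into \eqref{;key}, whose proof is deferred to \ssref{;prfkey}. That is where I expect the main obstacle to lie: each $\v^{n}$ arises from the Clark--Ocone representation and carries no bound uniform in $n$, so one cannot simply pass to the supremum. I anticipate the proof of \eqref{;key} will rewrite $I^{3}_{n}=\ex^{\v^{n}}[F(\br)-\tfrac12\int_{0}^{1}|\v^{n}_{s}|^{2}\,ds]$ as an expectation under $\pr$ by Girsanov's formula \eqref{;gir}, turning it into a genuine competitor $\ex[F(\tr{u^{n}}{}(\br))-\tfrac12\nt{u^{n}}{\A}^{2}]$ for an admissible $u^{n}\in\A$ built from $\v^{n}$ in the spirit of the construction in the proof of \lref{;llb2}, with \thetag{A2} supplying the uniform control of the negative part $F_{-}$ that makes the manipulation valid.
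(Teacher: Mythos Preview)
Your proof is correct and follows essentially the same route as the paper: both use the decomposition \eqref{;decompI}, invoke \lref{;convI} for $I^{1}_{n},I^{2}_{n}\to 0$, and appeal to the estimate \eqref{;key} for $I^{3}_{n}$; the only difference is that you take the limit first and then apply \eqref{;key}, whereas the paper applies \eqref{;key} first and then lets $n\to\infty$. Your closing paragraph is forward-looking commentary on \eqref{;key} rather than part of the proof of the proposition, and its anticipation of a Girsanov rewriting in the spirit of \lref{;llb2} is broadly on target.
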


\begin{proof}
By \eqref{;decompI} and \eqref{;key}, we have 
\begin{align*}
 \log \ex \left[ 
 e^{F(\br )}
 \right] \le I^{1}_{n}+I^{2}_{n}+\sup _{\v \in \A }
 \left\{ \ex \left[ 
 F\left( \tr{\v }{}(\br )\right) 
 \right] -\frac{1}{2}\nt{\v }{\A }^{2}
 \right\} 
\end{align*}
for all $n\in \N $. By letting $n\to \infty $, 
the assertion follows from \lref{;convI}. 
\end{proof}

The proof of \pref{;pub} is immediate from \pref{;pubba}. 
\begin{proof}[Proof of \pref{;pub}]
 For a measurable function $F:\W \to \R $ satisfying \thetag{A1} and 
 \thetag{A2}, we set for each $N>0$, 
 \begin{align*}
  F_{N}(w):=F(w)\wedge N, \quad w\in \W . 
 \end{align*}
 Then for any $N$, we have by \pref{;pubba}, 
 \begin{align*}
  \lhs{F_{N}}&\le 
  \sup _{\v \in \A }\left\{ 
  \ex \left[ F_{N}\left( 
  \tr{\v }{}(\br )
  \right) 
  \right] -\frac{1}{2}\nt{\v }{\A }^{2}
  \right\} \\
  &\le \sup _{\v \in \A }\left\{ 
 \ex \left[ F\left( 
 \tr{\v }{}(\br )
 \right) 
 \right] -\frac{1}{2}\nt{\v }{\A }^{2}
 \right\} . 
 \end{align*}
Letting $N\to \infty $ on the leftmost side leads to the conclusion 
by the dominated convergence theorem. 
\end{proof}

\subsection{Proof of \eqref{;key}}\label{;prfkey}
\ND\bv{\overline{\v}}

In this subsection we prove the estimate \eqref{;key}, which 
we rephrase in a slightly stronger statement that 
\begin{prop}\label{;pkey}
 Let $F:\W \to \R $ be a measurable function satisfying \thetag{A2} 
 and \eqref{;defM}. Then it holds that for any $\v \in \A _{b}$, 
 \begin{align}\label{;eqkey}
 \ex ^{\v _{}}\left[ 
 F(\br )-\frac{1}{2}\int _{0}^{1}\left| 
 \v ^{}_{s}
 \right| ^{2}ds
 \right] \le 
 \sup _{\bv \in \si }
 \left\{ 
 \ex \left[ F\left( 
 \tr{\bv }{}(\br )
 \right) 
 \right] -\frac{1}{2}\nt{\bv }{\A }^{2}
 \right\} . 
 \end{align}
\end{prop}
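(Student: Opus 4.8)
\noindent\textbf{Proof strategy for \pref{;pkey}.}
The plan is to reduce \eqref{;eqkey} to the case $\v \in \si $ and, for such $\v $, to identify the left-hand side with a \emph{single} term on the right-hand side by inverting the path transform $\tr{-\v }{}$ and invoking Girsanov's identity \eqref{;gir}, in the spirit of the proof of \lref{;llb2}.

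\emph{Reduction to $\v \in \si $.} Given $\v \in \A _{b}$, I would use the density of $\si $ in $\A $ to pick $\v ^{n}\in \si $ with $\nt{\v ^{n}-\v }{\A }\to 0$; truncating the coefficients of $\v ^{n}$ in Euclidean norm at the level $C_{0}:=\sup _{0\le t\le 1}\no{|\v _{t}|}{\infty }$ keeps $\v ^{n}$ in $\si $ and preserves the convergence, so one may assume $\sup _{n}\sup _{0\le t\le 1}\no{|\v ^{n}_{t}|}{\infty }\le C_{0}$. Then $\E{\v ^{n}}_{1}\to \E{\v }_{1}$ in probability (since $\int _{0}^{1}\v ^{n}_{s}\cdot d\br _{s}$ and $\int _{0}^{1}|\v ^{n}_{s}|^{2}\,ds$ converge in $L^{2}(\pr )$ and $L^{1}(\pr )$ respectively), while \lref{;ebound} bounds $\{ \E{\v ^{n}}_{1}\} _{n}$ in $L^{p}(\pr )$ for every $p>1$ and \thetag{A2} together with \eqref{;defM} gives $F(\br )\in L^{1+\delta }(\pr )$. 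A H\"older estimate then makes $\{ F(\br )\E{\v ^{n}}_{1}\} _{n}$ and $\{ \E{\v ^{n}}_{1}\int _{0}^{1}|\v ^{n}_{s}|^{2}\,ds\} _{n}$ uniformly integrable, so that, recalling $\ex ^{\v ^{n}}[\,\cdot \,]=\ex [\,\cdot \,\E{\v ^{n}}_{1}]$,
\[
 \ex ^{\v }\left[ F(\br )-\tfrac{1}{2}\int _{0}^{1}|\v _{s}|^{2}\,ds\right]
 =\lim _{n\to \infty }\ex ^{\v ^{n}}\left[ F(\br )-\tfrac{1}{2}\int _{0}^{1}|\v ^{n}_{s}|^{2}\,ds\right] .
\]
Hence it is enough to prove \eqref{;eqkey} when $\v \in \si $.

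\emph{The case $\v \in \si $.} Let $\v \in \si $ be written as in \eqref{;simplep}. Proceeding interval by interval as in the construction in the proof of \lref{;llb2}, one sees that $\tr{-\v }{}$ is a bijection of $\W $ whose inverse $S$ is continuous and ``adapted'', i.e.\ $S(y)|_{[0,t_{k}]}$ depends only on $y|_{[0,t_{k}]}$. I would then set $\bv _{s}(y):=\v _{s}(S(y))$ and check that $\bv \in \si $, that $\bv $ is bounded by $C_{0}$, and that $\tr{\bv }{}=S$. Since $\tr{-\v }{}(\br )$ is a Brownian motion under $\pr ^{\v }$, applying \eqref{;gir} to the nonnegative functionals $F_{+}\circ S$, $F_{-}\circ S$ and $\int _{0}^{1}|\v _{s}(S(\cdot ))|^{2}\,ds$ (all integrable by the previous step) and recombining yields
\[
 \ex ^{\v }\left[ F(\br )-\tfrac{1}{2}\int _{0}^{1}|\v _{s}|^{2}\,ds\right]
 =\ex \left[ F\left( \tr{\bv }{}(\br )\right) \right] -\tfrac{1}{2}\nt{\bv }{\A }^{2},
\]
which is at most $\sup _{\bv \in \si }\bigl\{ \ex [F(\tr{\bv }{}(\br ))]-\tfrac{1}{2}\nt{\bv }{\A }^{2}\bigr\} $. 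This proves \eqref{;eqkey}.

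\emph{Main obstacle.} The delicate step is the passage to the limit in the reduction: since $F$ need not be bounded from below, $\ex ^{\v ^{n}}[F(\br )]$ cannot be controlled crudely, and the uniform integrability must be extracted from \thetag{A2} and the $L^{p}$-bound of \lref{;ebound}. The inversion of $\tr{-\v }{}$ in the second step is elementary but should be carried out carefully enough to guarantee that $\bv $ again lies in $\si $.
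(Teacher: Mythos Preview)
Your proposal is correct and follows essentially the same route as the paper: approximate $\v \in \A _{b}$ by uniformly bounded $\v ^{n}\in \si $, show that the left-hand side of \eqref{;eqkey} is the limit of the corresponding expressions for $\v ^{n}$, and for each $\v ^{n}\in \si $ construct the inverse transform $\bv \in \si $ exactly as in \lref{;llb2} (this is the paper's \lref{;J3}). The only technical difference is in the limiting step: the paper isolates $L^{1}$-convergence $\no{\E{\v ^{n}}_{1}-\E{\v }_{1}}{1}\to 0$ via Scheff\'e's lemma (its \lref{;scheffe}) and then handles the unbounded part of $F$ by truncation and H\"older (its \lref{;convJ}), whereas you pass to the limit in one stroke via uniform integrability drawn from the same $L^{p}$-bound of \lref{;ebound} together with \thetag{A2}; the two arguments use the same ingredients and are effectively interchangeable.
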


A key to the proof of this proposition 
is \lref{;scheffe} below, which is an 
immediate consequence of Scheff\'e's lemma. 
We note that Scheff\'e's lemma is also employed 
in Osuka \cite{osu}, where the variational 
representation \eqref{;vr0} of Bou\'e-Dupuis 
is extended to bounded functionals of 
$G$-Brownian motion, an extended notion of 
Brownian motion introduced by Peng \cite{pen1,pen2}, 
to the framework of sublinear expectation spaces. 

Fix $\v \in \A _{b}$ and let $\{ \v ^{n}\} _{n\in \N }\subset \si $ 
be such that 
\begin{align}\label{;approx}
 \lim _{n\to \infty }\nt{\v ^{n}-\v }{\A }=0 
\end{align}
and that by truncating each $\v ^{n}$ if necessary, 
\begin{align}\label{;Kbd}
 \sup _{n\in \N }\sup _{0\le t\le 1}\no{|\v ^{n}_{t}|}{\infty }
 \le \sup _{0\le t\le 1}\no{|\v ^{}_{t}|}{\infty }=:K<\infty . 
 %%\notag \\&=:K<\infty . \label{;Kbd}
\end{align}
Note that such an approximate sequence exists 
by the density of $\si $ in $\A $. 

\begin{lem}\label{;scheffe}
 It holds that 
 \begin{align}\label{;scheffe1}
  \lim _{n\to \infty }
  \no{\E{\v ^{n}}_{1}-\E{\v }_{1}}{1}=0. 
 \end{align}
\end{lem}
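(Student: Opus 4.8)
The plan is to apply Scheff\'e's lemma to the nonnegative random variables $\E{\v ^{n}}_{1}$ and $\E{\v }_{1}$. Since $\v \in \A _{b}$ by hypothesis and each $\v ^{n}$ lies in $\si \subset \A _{b}$, Novikov's condition (recalled just before \lref{;ebound}) guarantees that $\E{\v ^{n}}$ and $\E{\v }$ are true martingales; being equal to $1$ at time $0$, they satisfy $\ex [\E{\v ^{n}}_{1}]=\ex [\E{\v }_{1}]=1$ for every $n\in \N $. Thus the equality of total masses required by Scheff\'e's lemma holds automatically, and the whole task reduces to showing that $\E{\v ^{n}}_{1}\to \E{\v }_{1}$ in probability as $n\to \infty $.

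To this end I would establish that the exponent $\int _{0}^{1}\v ^{n}_{s}\cdot dW_{s}-\frac{1}{2}\int _{0}^{1}|\v ^{n}_{s}|^{2}\,ds$ converges in probability to $\int _{0}^{1}\v _{s}\cdot dW_{s}-\frac{1}{2}\int _{0}^{1}|\v _{s}|^{2}\,ds$; the conclusion then follows from the continuity of $z\mapsto e^{z}$. For the stochastic-integral part, It\^o's isometry gives
\[
 \ex \left[ \left( \int _{0}^{1}\v ^{n}_{s}\cdot dW_{s}-\int _{0}^{1}\v _{s}\cdot dW_{s}\right) ^{2}\right]
 =\nt{\v ^{n}-\v }{\A }^{2}\xrightarrow[n\to \infty ]{}0
\]
by \eqref{;approx}, so this part converges in $L^{2}(\pr )$, hence in probability. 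For the Lebesgue-integral part, the bound \eqref{;Kbd} yields the pointwise estimate $\bigl| |\v ^{n}_{s}|^{2}-|\v _{s}|^{2}\bigr| \le 2K|\v ^{n}_{s}-\v _{s}|$, so that by the Cauchy--Schwarz inequality
\[
 \ex \left[ \left| \int _{0}^{1}|\v ^{n}_{s}|^{2}\,ds-\int _{0}^{1}|\v _{s}|^{2}\,ds\right| \right]
 \le 2K\,\nt{\v ^{n}-\v }{\A }\xrightarrow[n\to \infty ]{}0 ;
\]
in particular this part converges in $L^{1}(\pr )$, hence in probability. Combining the two displays gives the desired convergence in probability of the exponents.

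With $\E{\v ^{n}}_{1}\to \E{\v }_{1}$ in probability and $\ex [\E{\v ^{n}}_{1}]=\ex [\E{\v }_{1}]$ in hand, Scheff\'e's lemma delivers $\no{\E{\v ^{n}}_{1}-\E{\v }_{1}}{1}\to 0$, which is \eqref{;scheffe1}; should one prefer the almost-sure version of Scheff\'e's lemma, the argument is run along an arbitrary subsequence, from which an a.s.-convergent further subsequence is extracted, and one concludes by the usual subsequence characterization of $L^{1}$-convergence. I expect the only mildly delicate step to be the passage from $\nt{\v ^{n}-\v }{\A }\to 0$ to convergence of the stochastic integrals, which It\^o's isometry settles at once; everything else is routine. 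Alternatively one can sidestep Scheff\'e's lemma entirely: by \eqref{;Kbd} and \lref{;ebound}, the family $\{ \E{\v ^{n}}_{1}\} _{n\in \N }$ is bounded in $L^{p}(\pr )$ for each $p>1$, hence uniformly integrable, so that convergence in probability already self-improves to convergence in $L^{1}(\pr )$.
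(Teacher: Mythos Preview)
Your argument is correct and mirrors the paper's proof: both use It\^o's isometry and the bound \eqref{;Kbd} to control the two parts of the exponent, invoke Novikov's condition for $\ex [\E{\v ^{n}}_{1}]=\ex [\E{\v }_{1}]=1$, and conclude via Scheff\'e's lemma. The only cosmetic difference is that the paper passes directly to an a.s.-convergent subsequence whereas you argue convergence in probability first (and then note the subsequence route yourself); the alternative via uniform integrability you mention is also valid but not used in the paper.
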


\begin{proof}
 Fix an arbitrary subsequence $\{ n'\} \subset \N $. It suffices to 
 prove the existence of a subsequence of $\{ n'\} $ 
 along which the convergence \eqref{;scheffe1} takes place. 
 By It\^o's isometry and \eqref{;approx}, 
 \begin{align*}
  \ex \left[ 
  \left| 
  \int _{0}^{1}\v ^{n'}_{s}\cdot d\br _{s}
  -\int _{0}^{1}\v ^{}_{s}\cdot d\br _{s}
  \right| ^2
  \right] =\nt{\v ^{n'}-\v }{\A }^{2}
  \xrightarrow[n'\to \infty ]{}0. 
 \end{align*}
 Moreover, we have by \eqref{;approx} and \eqref{;Kbd}, 
 \begin{align*}
  \ex \left[ 
  \left| 
  \int _{0}^{1}\bigl| \v ^{n'}_{s}\bigr| ^{2}ds
  -\int _{0}^{1}\left| \v ^{}_{s}\right| ^{2}ds
  \right| 
  \right] 
  \le 2K\ex \left[ 
  \int _{0}^{1}\bigl| 
  \v ^{n'}_{s}-\v _{s}
  \bigr| ds
  \right] 
  \xrightarrow[n'\to \infty ]{}0. 
 \end{align*}
 Therefore we may extract a subsequence 
 $\{ n''\} \subset \{ n'\} $ such that 
 \begin{align}\label{;asconv}
  \lim _{n''\to \infty }\left( \int _{0}^{1}\v ^{n''}_{s}\cdot d\br _{s}
  -\frac{1}{2}\int _{0}^{1}\bigl| \v ^{n''}_{s}\bigr| ^{2}ds\right) 
  =\int _{0}^{1}\v ^{}_{s}\cdot d\br _{s} 
  -\frac{1}{2}\int _{0}^{1}\bigl| \v ^{}_{s}\bigr| ^{2}ds 
  \quad \text{a.s.} 
 \end{align}
 By the boundedness of $v^{n''}$ and 
 $\v $, Novikov's condition entails that 
 \begin{align}\label{;const}
  \ex \bigl[ 
  \E{\v ^{n''}}_{1}
  \bigr] =\ex \left[ \E{\v }_{1}\right] =1 
  \quad \text{for all $n''$.}
 \end{align}
 By \eqref{;asconv}, \eqref{;const} and Scheff\'e's lemma, we 
 conclude that the convergence \eqref{;scheffe1} takes 
 place along $\{ n''\} $. This proves the lemma. 
\end{proof}

For every $n\in \N $, we decompose the left-hand side of \eqref{;eqkey} 
into the sum 
\begin{align}\label{;decompJ}
 \ex ^{\v _{}}\left[ 
 F(\br )-\frac{1}{2}\int _{0}^{1}\left| 
 \v ^{}_{s}
 \right| ^{2}ds
 \right] =J^{1}_{n}+\frac{1}{2}J^{2}_{n}+J^{3}_{n}, 
\end{align}
where we set 
\begin{align*}
 J^{1}_{n}
 &=\ex \left[ F(\br )\!\left( \E{\v }_{1}-\E{\v ^{n}}_{1}\right) \right] , \\
 J^{2}_{n}
 &=\ex ^{\v ^{n}}\left[ 
 \int _{0}^{1}\bigl| \v ^{n}_{s}\bigr| ^2ds
 \right] 
 -\ex ^{\v ^{}}\left[ 
 \int _{0}^{1}\bigl| \v ^{}_{s}\bigr| ^2ds
 \right] , \\
 J^{3}_{n}&=\ex ^{\v _{n}}\left[ 
 F(\br )-\frac{1}{2}\int _{0}^{1}\left| 
 \v ^{n}_{s}
 \right| ^{2}ds
 \right] . 
\end{align*}

\begin{lem}\label{;convJ}
 We have 
 \begin{align}\label{;convJ12}
  \lim _{n\to \infty }J^{i}_{n}=0, \quad i=1,2. 
 \end{align}
\end{lem}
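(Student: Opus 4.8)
The plan is to treat $J^1_n$ and $J^2_n$ separately. In each case I will split the quantity into a bounded piece, controlled directly by the $L^1$-convergence $\no{\E{\v^n}_1-\E{\v}_1}{1}\to 0$ of \lref{;scheffe}, and an unbounded piece, controlled by combining a pointwise bound on the difference $\v^n-\v$ of the drifts with the uniform $L^p$-estimates on the densities supplied by \lref{;ebound} together with \eqref{;Kbd}.

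For $J^1_n=\ex\!\left[F(\br)\left(\E{\v}_1-\E{\v^n}_1\right)\right]$ I will write $F=F_+-F_-$. Since $F$ is bounded above by $M$ (see \eqref{;defM}), $F_+$ is bounded by $M\vee 0$, so the $F_+$-contribution is at most $(M\vee 0)\no{\E{\v}_1-\E{\v^n}_1}{1}\to 0$ by \lref{;scheffe}. For the $F_-$-contribution, H\"older's inequality with conjugate exponents $1+\delta$ and $1+1/\delta$ gives $\ex\!\left[F_-(\br)\left|\E{\v}_1-\E{\v^n}_1\right|\right]\le\no{F_-(\br)}{1+\delta}\,\no{\E{\v}_1-\E{\v^n}_1}{1+1/\delta}$, so it suffices to upgrade the convergence of \lref{;scheffe} from $L^1$ to $L^{1+1/\delta}$. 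This I will do by a uniform-integrability argument: by \lref{;ebound} and \eqref{;Kbd} the family $\{\E{\v^n}_1\}_n$ is bounded in every $L^p(\pr)$, hence $\{|\E{\v}_1-\E{\v^n}_1|^{1+1/\delta}\}_n$ is uniformly integrable; since $\E{\v^n}_1\to\E{\v}_1$ in probability (a consequence of \lref{;scheffe}), Vitali's convergence theorem gives $\no{\E{\v}_1-\E{\v^n}_1}{1+1/\delta}\to 0$.

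For $J^2_n$ I will first rewrite the expectations under $\pr^{\v^n}$ and $\pr^{\v}$ as $\pr$-expectations against the densities $\E{\v^n}_1$ and $\E{\v}_1$, and then decompose
\begin{align*}
 J^2_n&=\ex\!\left[\E{\v^n}_1\!\left(\int_0^1|\v^n_s|^2\,ds-\int_0^1|\v_s|^2\,ds\right)\right]\\
 &\quad+\ex\!\left[\left(\E{\v^n}_1-\E{\v}_1\right)\int_0^1|\v_s|^2\,ds\right].
\end{align*}
In the first term I will use $|\v^n_s|^2-|\v_s|^2=(\v^n_s-\v_s)\cdot(\v^n_s+\v_s)$ and \eqref{;Kbd} to bound the inner integral by $2K\int_0^1|\v^n_s-\v_s|\,ds$ in absolute value, and then apply the Cauchy--Schwarz inequality (first in $ds$, then in $\pr$) to dominate the term by $2K\no{\E{\v^n}_1}{2}\,\nt{\v^n-\v}{\A}$; this tends to $0$ by \lref{;ebound} and \eqref{;approx}. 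The second term is at most $K^2\no{\E{\v^n}_1-\E{\v}_1}{1}$ because $\int_0^1|\v_s|^2\,ds\le K^2$ a.s.\ by \eqref{;Kbd}, so it tends to $0$ by \lref{;scheffe}.

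I expect the only delicate point to be the $F_-$-part of $J^1_n$: it genuinely requires convergence of $\E{\v^n}_1$ to $\E{\v}_1$ in a norm strictly stronger than $L^1$, which \lref{;scheffe} does not by itself provide and which is recovered only through the uniform-integrability step built on the moment bounds of \lref{;ebound}. Everything else reduces to H\"older's and Cauchy--Schwarz inequalities applied to the $L^1$-convergence already in hand.
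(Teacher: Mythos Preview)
Your argument is correct. The treatment of $J^{2}_{n}$ is essentially the same as the paper's: you add and subtract $\ex\!\left[\E{\v^{n}}_{1}\int_{0}^{1}|\v_{s}|^{2}\,ds\right]$ whereas the paper adds and subtracts $\ex\!\left[\E{\v}_{1}\int_{0}^{1}|\v^{n}_{s}|^{2}\,ds\right]$, leading to the same two estimates with the roles of $\v$ and $\v^{n}$ swapped.

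For $J^{1}_{n}$, however, you take a genuinely different route. The paper does \emph{not} upgrade the $L^{1}$-convergence of \lref{;scheffe} to $L^{1+1/\delta}$. Instead it truncates $F$ at a level $-N$: on $\{F(\br)>-N\}$ the integrand is bounded by $M\vee N$ and the $L^{1}$-convergence of the densities suffices, while on $\{F(\br)\le -N\}$ H\"older's inequality gives the bound $C\,\no{F_{-}(\br)\ind_{\{F_{-}(\br)\ge N\}}}{1+\delta}$ with $C=\sup_{n}\no{\E{\v}_{1}-\E{\v^{n}}_{1}}{1+1/\delta}<\infty$; one then lets $n\to\infty$ first and $N\to\infty$ second. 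Thus the paper uses only the \emph{uniform boundedness} of the densities in $L^{1+1/\delta}$, not their convergence there. Your approach---Vitali's theorem applied to $\{|\E{\v}_{1}-\E{\v^{n}}_{1}|^{1+1/\delta}\}_{n}$, whose uniform integrability follows from the uniform $L^{p}$-bounds of \lref{;ebound} for any $p>1+1/\delta$---is more direct and avoids the double limit, at the modest cost of invoking a slightly higher moment bound (still freely available from \lref{;ebound} and \eqref{;Kbd}). Both arguments rest on the same ingredients \thetag{A2}, \lref{;ebound}, \eqref{;Kbd} and \lref{;scheffe}.
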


\begin{proof}
 Fix $N>0$ arbitrarily. Observe the bound 
 \begin{align}
  \left| J^{1}_{n}\right| 
  &\le \ex \left[ 
  \bigl| F(\br )\ind _{\{ F(\br )>-N\} }\bigr| \bigl| \E{\v }_{1}-\E{\v ^{n}}_{1}\bigr| 
  \right] 
  +\ex \left[ 
  \bigl| F(\br )\ind _{\{ F(\br )\le -N\} }\bigr| 
  \bigl| \E{\v }_{1}-\E{\v ^{n}}_{1}\bigr| 
  \right] \notag \\
  &\le (M\vee N)\no{\E{\v }_{1}-\E{\v ^{n}}_{1}}{1}
  +C\no{F_{-}(\br )\ind _{\{ F_{-}(\br )\ge N\} }}{1+\delta} 
   \label{;estJ1}
 \end{align}
 with 
 $
  C:=\sup \limits_{n\in \N }\no{\E{\v }_{1}-\E{\v ^{n}}_{1}}{1+1/\delta }
 <\infty 
 $, 
 where for the second line we used \eqref{;defM} and H\"older's inequality; 
 the finiteness of $C$ is due to \eqref{;Kbd} and \lref{;ebound}. 
 Letting $n\to \infty $ on both sides of \eqref{;estJ1}, we see from 
 \lref{;scheffe} that 
 \begin{align*}
  \limsup _{n\to \infty }\left| J^{1}_{n}\right| 
  \le C\no{F_{-}(\br )\ind _{\{ F_{-}(\br )\ge N\} }}{1+\delta}
 \end{align*}
 for any $N>0$. Since the right-hand side tends to $0$ as $N\to \infty $ 
 by the assumption \thetag{A2}, we obtain \eqref{;convJ12} for $i=1$. 

 As for $J^{2}_{n}$, we observe that by \eqref{;Kbd}, 
 \begin{align*}
  \left| J^{2}_{n}\right| &\le 
  \ex \left[ 
  \int _{0}^{1}\bigl| \v ^{n}_{s}\bigr| ^2ds\,
  \bigl| \E{\v ^{n}}_{1}-\E{\v }_{1}\bigr| 
  \right] 
  +\ex \left[ 
  \left| 
  \int _{0}^{1}\left( \bigl| \v ^{n}_{s}\bigr| ^2-|\v _{s}|^2\right) ds
  \right| \E{\v }_{1}
  \right] \\
  &\le K^2\no{\E{\v ^{n}}_{1}-\E{\v }_{1}}{1}
  +2K\nt{\v ^{n}-\v }{\A }\no{\E{\v }_{1}}{2}. 
 \end{align*}
 Since $\no{\E{\v }_{1}}{2}<\infty $ by \lref{;ebound}, the last 
 expression tends to $0$ as $n\to \infty $ by \lref{;scheffe} and 
 \eqref{;approx}. The proof of the lemma is complete. 
\end{proof}

Concerning $J^{3}_{n}$, we have 
\begin{lem}\label{;J3}
 It holds that 
 \begin{align*}
 \sup _{n\in \N }J^{3}_{n}\le 
 \sup _{\bv \in \si }
 \left\{ 
 \ex \left[ F\left( 
 \tr{\bv }{}(\br )
 \right) 
 \right] -\frac{1}{2}\nt{\bv }{\A }^{2}
 \right\} . 
 \end{align*}
\end{lem}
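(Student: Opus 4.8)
The plan is to exhibit, for each $n\in\N$, a process $\bv^{n}\in\si$ with
\[
J^{3}_{n}=\ex\left[F\bigl(\tr{\bv^{n}}{}(\br)\bigr)\right]-\frac{1}{2}\nt{\bv^{n}}{\A}^{2}.
\]
Once this is established, the right-hand side is dominated by $\sup_{\bv\in\si}\{\ex[F(\tr{\bv}{}(\br))]-\tfrac12\nt{\bv}{\A}^{2}\}$ for every $n$, and taking the supremum over $n\in\N$ gives the lemma. Throughout we may assume, modifying $F$ on a $\pr$-null set if necessary, that $F\le M$ holds pointwise on $\W$; this changes neither $J^{3}_{n}$ nor $\ex[F(\tr{\bv^{n}}{}(\br))]$, since $\pr^{\v^{n}}$ and the law of $\tr{\bv^{n}}{}(\br)$ are both absolutely continuous with respect to $\pr$.

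To construct $\bv^{n}$, write $\v^{n}\in\si$ in the form \eqref{;simplep}, with coefficients $\xi^{n}_{0}\in\R^{d}$ and bounded continuous functionals $\xi^{n}_{1},\ldots,\xi^{n}_{m-1}$, and define $\bv^{n}$ by a recursive construction paralleling the one in the proof of \lref{;llb2}: put $\bv^{n}_{t}(u):=\xi^{n}_{0}$ for $t\in[t_{0},t_{1}]$ and, recursively, for $t\in(t_{k-1},t_{k}]$,
\[
\bv^{n}_{t}(u):=\xi^{n}_{k-1}\!\left(u(s)+\int_{0}^{s}\bv^{n}_{a}(u)\,da,\ s\le t_{k-1}\right),\qquad u\in\W.
\]
By induction this is a bounded continuous functional of $\{u(s):s\le t_{k-1}\}$, so $\bv^{n}\in\si$, and $\sup_{0\le t\le1}\no{|\bv^{n}_{t}|}{\infty}\le K$ by \eqref{;Kbd}. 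By construction $\bv^{n}_{s}(u)=\v^{n}_{s}\bigl(\tr{\bv^{n}}{}(u)\bigr)$ for all $u\in\W$, hence $\tr{-\v^{n}}{}\circ\tr{\bv^{n}}{}=\mathrm{id}_{\W}$; since $\tr{-\v^{n}}{}$ is visibly injective on $\W$ for a simple process $\v^{n}$, it is in fact a bijection with inverse $\tr{\bv^{n}}{}$, so also $\tr{\bv^{n}}{}\circ\tr{-\v^{n}}{}=\mathrm{id}_{\W}$ and $\bv^{n}_{s}\bigl(\tr{-\v^{n}}{}(w)\bigr)=\v^{n}_{s}(w)$ for all $w\in\W$.

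Since $\v^{n}\in\si\subset\A_{b}$, the process $\E{\v^{n}}$ is a true martingale by Novikov's condition, so Girsanov's formula \eqref{;gir} is available. Applying it to the nonnegative measurable functional
\[
\Psi(u):=M-F\bigl(\tr{\bv^{n}}{}(u)\bigr)+\frac{1}{2}\int_{0}^{1}\bigl|\bv^{n}_{s}(u)\bigr|^{2}\,ds,\qquad u\in\W,
\]
and using the three identities just recorded to evaluate $\Psi\bigl(\tr{-\v^{n}}{}(\br)\bigr)=M-F(\br)+\tfrac12\int_{0}^{1}|\v^{n}_{s}|^{2}\,ds$, one obtains
\[
M-J^{3}_{n}=\ex^{\v^{n}}\!\left[\Psi\bigl(\tr{-\v^{n}}{}(\br)\bigr)\right]=\ex\left[\Psi(\br)\right]=M-\ex\left[F\bigl(\tr{\bv^{n}}{}(\br)\bigr)\right]+\frac{1}{2}\nt{\bv^{n}}{\A}^{2},
\]
and rearranging gives the desired identity. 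The rearrangement is legitimate because $J^{3}_{n}$ is finite: $J^{3}_{n}\le M$ is clear, while $\ex^{\v^{n}}[F_{-}(\br)]=\ex[F_{-}(\br)\E{\v^{n}}_{1}]\le\no{F_{-}(\br)}{1+\delta}\,\no{\E{\v^{n}}_{1}}{1+1/\delta}<\infty$ by \thetag{A2} and \lref{;ebound} (as noted after \eqref{;decompI}), together with $\ex^{\v^{n}}\bigl[\int_{0}^{1}|\v^{n}_{s}|^{2}\,ds\bigr]\le K^{2}$, gives $J^{3}_{n}>-\infty$; the identity then also forces $\ex[F_{-}(\tr{\bv^{n}}{}(\br))]<\infty$.

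I do not expect a real obstacle here: the argument simply repackages the path-transform construction from the proof of \lref{;llb2} and combines it with Girsanov's formula \eqref{;gir}. The only delicate point is that $F$ is merely bounded from above rather than bounded, which is handled by passing to the manifestly nonnegative functional $\Psi$ and invoking \thetag{A2} to guarantee that $J^{3}_{n}$ — and hence $\ex[F(\tr{\bv^{n}}{}(\br))]$ — is finite; checking that the recursively defined $\bv^{n}$ genuinely lies in $\si$ is routine bookkeeping.
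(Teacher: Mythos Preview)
Your proof is correct and follows essentially the same approach as the paper: you build $\bv^{n}\in\si$ from $\v^{n}$ by the same recursive substitution so that $\tr{\bv^{n}}{}$ inverts $\tr{-\v^{n}}{}$, and then apply Girsanov's formula \eqref{;gir} to identify $J^{3}_{n}$ with $\ex[F(\tr{\bv^{n}}{}(\br))]-\tfrac12\nt{\bv^{n}}{\A}^{2}$. The only cosmetic differences are that the paper verifies the relation $\v^{n}_{t}(w)=\bv_{t}(\tr{-\v^{n}}{}(w))$ directly by induction rather than via your injectivity argument, and that you make the nonnegativity needed for \eqref{;gir} explicit through the auxiliary functional $\Psi$, whereas the paper simply cites ``the boundedness from above of $F$''.
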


\ND\bxi{\overline{\xi}}
\begin{proof}
 Fix $n\in \N $. Since $\v ^{n}$ is in $\si $, we may represent 
 $\v ^{n}$ as \eqref{;simplep}. We construct from $\v ^{n}$ 
 a process $\bv $ in such a way that for each $w\in \W $,  
 \begin{align*}
  &\bxi _{0}:=\xi _{0}, && 
  \bv _{t}(w):=\bxi _{0} \ \text{for }t_{0}\le t\le t_{1}, \\
  &\bxi _{1}(w):=\xi _{1}
  \left( 
  w(t)+\int _{0}^{t}\bv _{s}(w)\,ds,t\le t_{1}
  \right) , && \bv _{t}(w):=\bxi _{1}(w) \ \text{for }t_{1}<t\le t_{2}, \\
  &\cdots && \cdots \\
  &\bxi _{m-1}(w):=\xi _{m-1}
  \left( 
  w(t)+\int _{0}^{t}\bv _{s}(w)\,ds,t\le t_{m-1}
  \right) , && \bv _{t}(w):=\bxi _{m-1}(w) \ \text{for }t_{m-1}<t\le t_{m}. 
 \end{align*}
 Note that $\bv $ is in $\si $ by construction; moreover, 
 by induction on $k=1,\ldots ,m$, we have 
 for all $w\in \W $, 
 \begin{align}\label{;trrel1}
  \v ^{n}_{t}(w)=\bv _{t}\left( \tr{-\v ^{n}}{}(w)\right) , 
  \quad 0\le t\le t_{k}, \ k=1,\ldots ,m, 
 \end{align}
 from which it also follows that 
 \begin{align}\label{;trrel2}
  \tr{\bv }{}\circ \tr{-\v ^{n}}{}(w)=w \quad \text{for all }w\in \W . 
 \end{align}
 These relations were noticed in Zhang \cite{zha}. 
 Using \eqref{;trrel1} and \eqref{;trrel2}, 
 we rewrite $J^{3}_{n}$ as 
 \begin{align*}
  J^{3}_{n}&=\ex ^{\v ^{n}}
  \left[ 
  F\left( \tr{\bv }{}\circ \tr{-\v ^{n}}{}(\br )\right) 
  -\frac{1}{2}\int _{0}^{1}
  \left| \bv _{s}\left( \tr{-\v ^{n}}{}(\br )\right) \right| ^2ds
  \right] \\
  &=\ex \left[ 
  F\left( \tr{\bv }{}(\br )\right) 
  -\frac{1}{2}\int _{0}^{1}\left| \bv _{s}\right| ^2ds
  \right] , 
 \end{align*}
 where the second line follows from the boundedness 
 from above of $F$ and Girsanov's formula \eqref{;gir}. 
 Since $\bv \in \si $, the lemma is proven. 
\end{proof}

We are in a position to prove \pref{;pkey}. 
\begin{proof}[Proof of \pref{;pkey}]
 By \eqref{;decompJ} and \lref{;J3}, we have 
 \begin{align*}
 \ex ^{\v _{}}\left[ 
 F(\br )-\frac{1}{2}\int _{0}^{1}\left| 
 \v ^{}_{s}
 \right| ^{2}ds
 \right] \le J^{1}_{n}+\frac{1}{2}J^{2}_{n}+
 \sup _{\bv \in \si }
 \left\{ 
 \ex \left[ F\left( 
 \tr{\bv }{}(\br )
 \right) 
 \right] -\frac{1}{2}\nt{\bv }{\A }^{2}
 \right\} 
 \end{align*}
 for all $n\in \N $. The assertion 
 follows by letting $n\to \infty $ thanks to \lref{;convJ}. 
\end{proof}

\pref{;pkey} reveals that we may replace the supremum over 
$\v \in \A $ in the variational representation \eqref{;vr1} 
by that over $\v \in \si $; by adopting the convention that 
$\log \infty =\infty $, the representation \eqref{;vr1} with this 
replacement remains true even if we remove the assumption 
\thetag{A1}. For later use, we state it in a proposition. 

\begin{prop}\label{;lu}
 Let $F:\W \to \R $ be measurable and satisfy 
 \thetag{A2}. Then it holds that 
 \begin{align}\label{;vr2}
 \log \ex \left[ 
 e^{F(W)}
 \right] 
 =\sup _{\v \in \si }\left\{ 
 \ex \left[ F\left( 
 \tr{\v }{}(\br )
 \right) 
 \right] -\frac{1}{2}\nt{\v }{\A }^{2}
 \right\} , 
 \end{align}
 where the left-hand side is understood to be equal to 
 $\infty $ when $\ex \left[ e^{F(W)}\right] =\infty $. 
\end{prop}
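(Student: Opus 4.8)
The plan is to derive \eqref{;vr2} by assembling pieces already in place --- the lower bound \pref{;plb} (which needs only \thetag{A1}) and the upper-bound machinery of \ssref{;sspub} and \ssref{;prfkey} (whose supremum, through \pref{;pkey}, is in fact taken over $\si$) --- together with one short integrability remark and a truncation. First I would record the remark that for every $\v\in\si$ and every $F$ satisfying \thetag{A2} one has
\[
\ex\left[F_{-}\left(\tr{\v}{}(\br)\right)\right]<\infty,
\]
which guarantees that each quantity $\ex\left[F\left(\tr{\v}{}(\br)\right)\right]-\frac{1}{2}\nt{\v}{\A}^{2}$ appearing on the right-hand side of \eqref{;vr2} is well-defined in $(-\infty,+\infty]$. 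To prove it, given $\v\in\si$ let $\widetilde{\v}\in\si$ be the process built from $\v$ exactly as in the proof of \lref{;llb2}, so that $\tr{\v}{}\circ\tr{-\widetilde{\v}}{}(w)=w$ for all $w\in\W$; applying Girsanov's formula \eqref{;gir} to the nonnegative functional $w\mapsto F_{-}\left(\tr{\v}{}(w)\right)$ gives $\ex\left[F_{-}\left(\tr{\v}{}(\br)\right)\right]=\ex^{\widetilde{\v}}\left[F_{-}(\br)\right]=\ex\left[F_{-}(\br)\E{\widetilde{\v}}_{1}\right]$, and since $\widetilde{\v}\in\A_{b}$, H\"older's inequality with exponents $1+\delta$ and $1+1/\delta$ together with \lref{;ebound} bounds this by $\no{F_{-}(\br)}{1+\delta}\,\no{\E{\widetilde{\v}}_{1}}{1+1/\delta}<\infty$, using \thetag{A2}.

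For the upper bound, write $S:=\sup_{\v\in\si}\left\{\ex\left[F\left(\tr{\v}{}(\br)\right)\right]-\frac{1}{2}\nt{\v}{\A}^{2}\right\}$. When $F$ additionally satisfies \eqref{;defM}, the argument of \pref{;pubba} already gives $\lhs{F}\le S$: in the decomposition \eqref{;decompI} one has $I^{1}_{n},I^{2}_{n}\to 0$ by \lref{;convI}, while $I^{3}_{n}=\ex^{\v^{n}}\!\left[F(\br)-\frac{1}{2}\int_{0}^{1}|\v^{n}_{s}|^{2}\,ds\right]$ with $\v^{n}\in\A_{b}$, so $I^{3}_{n}\le S$ by \pref{;pkey}, and letting $n\to\infty$ yields the bound. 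For general $F$ satisfying \thetag{A2}, put $F_{N}:=F\wedge N$ for $N>0$; then $F_{N}$ is bounded from above and, because $(F_{N})_{-}\le F_{-}$, still satisfies \thetag{A2}, so the previous case applies to $F_{N}$ and gives $\lhs{F_{N}}\le\sup_{\v\in\si}\left\{\ex\left[F_{N}\left(\tr{\v}{}(\br)\right)\right]-\frac{1}{2}\nt{\v}{\A}^{2}\right\}$. Since $F_{N}\le F$ and both expectations are well-defined by the first step, monotonicity shows this is $\le S$, so $\lhs{F_{N}}\le S$ for every $N$; letting $N\to\infty$, the monotone convergence theorem gives $\ex\left[e^{F_{N}(\br)}\right]\uparrow\ex\left[e^{F(\br)}\right]$, hence $\lhs{F_{N}}\uparrow\lhs{F}$ (taken to be $\infty$ when $\ex\left[e^{F(\br)}\right]=\infty$), so $\lhs{F}\le S$.

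For the lower bound: if $\ex\left[e^{F(\br)}\right]=\infty$ there is nothing to prove, and together with the previous paragraph this forces $\lhs{F}=S=\infty$; if $\ex\left[e^{F(\br)}\right]<\infty$, then \thetag{A1} holds and \pref{;plb} gives $\lhs{F}\ge\sup_{\v\in\A}\left\{\ex\left[F\left(\tr{\v}{}(\br)\right)\right]-\frac{1}{2}\nt{\v}{\A}^{2}\right\}\ge S$ since $\si\subset\A$. Combining with the upper bound proves \eqref{;vr2}. I do not anticipate a serious obstacle: the only genuinely new step is the integrability remark of the first paragraph, and the rest is a truncation combined with the fact --- already contained in \pref{;pkey} --- that the relevant upper bound comes with a supremum over $\si$ rather than over all of $\A$; the point needing care is merely the bookkeeping of which expectations are finite and of the convention $\log\infty=\infty$.
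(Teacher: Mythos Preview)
Your proposal is correct and follows essentially the same route as the paper: the integrability remark $\ex\bigl[F_{-}\bigl(\tr{\v}{}(\br)\bigr)\bigr]<\infty$ for $\v\in\si$ is proved identically via the construction of $\widetilde{\v}$ from \lref{;llb2}, Girsanov, H\"older and \lref{;ebound}, and the rest is the truncation $F_{N}=F\wedge N$ combined with \pref{;pkey} (supremum over $\si$) for the upper bound and \pref{;plb} for the lower bound. The only cosmetic difference is that the paper first records the \emph{equality} \eqref{;vr2} for each $F_{M}$ and then passes to the limit on both sides via a sup--sup interchange as in \eqref{;supsup}, whereas you bound the $F_{N}$--supremum by $S$ directly using $F_{N}\le F$; both are valid and rely on the same ingredients.
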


\begin{proof}
 For every $M>0$, we set $F_{M}(w):=F(w)\wedge M,\,w\in \W $. 
 By \psref{;plb} and \ref{;pkey} together with the proof of 
 \pref{;pubba}, we see that \eqref{;vr2} holds 
 for $F_{M}$: 
 \begin{align}\label{;vr2M}
 \log \ex \left[ 
 e^{F_{M}(W)}
 \right] 
 =\sup _{\v \in \si }\left\{ 
 \ex \left[ F_{M}\left( 
 \tr{\v }{}(\br )
 \right) 
 \right] -\frac{1}{2}\nt{\v }{\A }^{2}
 \right\} . 
 \end{align}
 Letting $M\to \infty $, we have the convergence of the left-hand side 
 to 
 $
 \log \ex \left[ 
 e^{F_{}(W)}
 \right] 
 $ 
 by the monotone convergence theorem. As for the right-hand side, 
 note that 
 \begin{align*}
  \ex \left[ F_{-}\left( \tr{\v }{}(\br )\right) \right] <\infty 
  \quad \text{for each $\v \in \si $;} 
 \end{align*} 
 indeed, constructing from $\v $ a process 
 $\tv $ in $\si $ that satisfies the relation \eqref{;compos}, we have 
 \begin{align*}
  \ex \left[ 
  F_{-}\left( 
  \tr{\v }{}(\br )
  \right) 
  \right] &=\ex ^{\tv }\left[ 
  F_{-}\left( \tr{\v }{}\circ \tr{-\tv }{}(\br )\right) 
  \right] \\
  &=\ex \left[ 
  F_{-}(\br )\E{\tv }_{1}
  \right] \\
  &\le \no{F_{-}(\br )}{1+\delta }\no{\E{\tv }_{1}}{1+1/\delta }, 
 \end{align*}
 which is finite by \thetag{A2}, the boundedness of $\tv $ and 
 \lref{;ebound}. Here we used Girsanov's formula \eqref{;gir} 
 for the first line, the relation \eqref{;compos} and the definition 
 \eqref{;dpv} of $\pr ^{\tv }$ for the second, and H\"older's 
 inequality for the third. Therefore we may also apply the monotone 
 convergence theorem to the right-hand side of \eqref{;vr2M} to obtain 
 \begin{align*}
  \sup _{M>0}\sup _{\v \in \si }\left\{ 
 \ex \left[ F_{M}\left( 
 \tr{\v }{}(\br )
 \right) 
 \right] -\frac{1}{2}\nt{\v }{\A }^{2}
 \right\} 
 =\sup _{\v \in \si }\left\{ 
 \ex \left[ F_{}\left( 
 \tr{\v }{}(\br )
 \right) 
 \right] -\frac{1}{2}\nt{\v }{\A }^{2}
 \right\} 
 \end{align*}
 (cf.~\eqref{;supsup}), which concludes the proof. 
\end{proof}

We end this section with a remark on the proof of 
\tref{;tmain1}. 
\begin{rem}\label{;rptmain1}
\thetag{1}~The above proof is also valid in the setting of 
an abstract Wiener space;  we may extend the variational 
representation of Zhang \cite{zha} for bounded functionals on the 
abstract Wiener space to functionals satisfying conditions 
corresponding to \thetag{A1} and \thetag{A2}. 

\noindent 
\thetag{2}~Since both sides of \eqref{;vr1} are well-defined
only under the assumption \thetag{A1} as noted in 
\rref{;rtmain1}\,\thetag{2}, it seems plausible that the 
representation \eqref{;vr1} holds true without any 
assumptions on $F$ from below; however, 
we have not succeeded in proving it. The problem is 
how to prove the upper bound \eqref{;eqpub} without 
the integrability assumption \thetag{A2}. 
\end{rem}

%%%%% New Section %%%%%
\section{Pr\'ekopa's theorem on Wiener space and its application}
\label{;prekopa}

In this section we prove \tref{;tmain2} and provide its 
application in \tref{;tbl}, which extends the Brascamp-Lieb 
inequality to the Wiener space. 

\subsection{Proof of \tref{;tmain2}}\label{;prfpre}
In this subsection we give a proof of \tref{;tmain2} 
as an immediate application of \pref{;lu}.  

\begin{proof}[Proof of \tref{;tmain2}]
 Set $g(\la )=\log \ex \left[ e^{G(\br ,\la )}\right] ,\,\la \in \Lambda $. 
 We fix $\la _{1},\la _{2}\in \Lambda $ and $\theta \in [0,1]$ arbitrarily. 
 For any $\v ^{1},\v ^{2}\in \si $, 
 we have by the condition \thetag{B2}, 
 \begin{align*}
  &G\bigl( 
  %%\br +\int _{0}^{\cdot }\left( \theta \v ^{1}_{s}
  %%+(1-\theta )\v ^{2}_{s}\right) ds,\,
  \tr{\theta \v ^{1}+(1-\theta )\v ^{2}}{}(w), 
  \theta \la _{1}+(1-\theta )\la _{2}
  \bigr) 
  -\frac{1}{2}\int _{0}^{1}\left| 
  \theta \v ^{1}_{s}(w)+(1-\theta )\v ^{2}_{s}(w)
  \right| ^{2}ds\\
  &\ge \theta 
  \Bigl\{ 
  G( 
  \tr{\v ^{1}}{}(w), 
  \la _{1}
  ) -\frac{1}{2}\int _{0}^{1}\left| \v ^{1}_{s}(w)\right| ^{2}ds
  \Bigr\} 
  +(1-\theta )
  \Bigl\{ 
  G( 
  \tr{\v ^{2}}{}(w), 
  \la _{2}
  ) -\frac{1}{2}\int _{0}^{1}\left| \v ^{2}_{s}(w)\right| ^{2}ds
  \Bigr\} 
 \end{align*}
 for all $w\in \W $. 
 Noting $\theta \v ^{1}+(1-\theta )\v ^{2}\in \si $, we take the 
 expectation in $w$ with respect to $\pr $ on both sides to get  
 \begin{align*}
  g\left( \theta \la _{1}+(1-\theta )\la _{2}\right) 
  &=\sup _{\v \in \si }\left\{ 
  \ex \left[ 
 G\left( 
 \tr{\v }{}(\br ), 
 \theta \la _{1}+(1-\theta )\la _{2}
 \right) \right] 
 -\frac{1}{2}\nt{\v }{\A }^{2}
 \right\} \\
 &\ge \theta \left\{ \ex \left[ 
 G\bigl( 
 \tr{\v ^{1}}{}(\br ), 
 \la _{1}
 \bigr) \right] -\frac{1}{2}\nt{\v ^{1}}{\A }^{2}
 \right\} 
 \\&\hspace{30mm}
 +(1-\theta )
 \left\{ \ex \left[ 
 G\bigl( 
 \tr{\v ^{2}}{}(\br ),  
  \la _{2}
  \bigr) \right] -\frac{1}{2}\nt{\v ^{2}}{\A }^{2}
 \right\}  
 \end{align*}
 for any $\v ^{1},\v ^{2}\in \si $. Here the equality is due 
 to \thetag{B1} and \pref{;lu}. Maximizing the rightmost side over 
 $\v ^{1}$ and $\v ^{2}$, and using \pref{;lu}, we obtain 
 \begin{align*}
  g\left( \theta \la _{1}+(1-\theta )\la _{2}\right) 
  \ge \theta g(\la _{1})+(1-\theta )g(\la _{2}) 
 \end{align*}
 as claimed. 
\end{proof} 

\begin{rem}\label{;r2tmain2}
\thetag{1}~In \cite[Subsection~13.A]{sim}, 
some convexity results are shown as to the Schr\"odinger 
operator $-(1/2)\Delta +V$ in $\R ^{d}$ with $V$ a 
convex function, such as the log-concavity of 
its ground state and the convexity of the infimum of its 
spectrum relative to an additional parameter put into 
the operator; these are derived by employing the time 
discretization of the associated Feynman-Kac path integral 
representations and finite-dimensional Pr\'ekopa's theorem. 
We can also prove those results by using \tref{;tmain2}; 
the advantage 
%%in doing so 
is that discretization procedures are not required at all.  

\noindent 
\thetag{2}~\tref{;tmain2} can also be extended to the framework of abstract 
Wiener spaces. 
\end{rem}

\subsection{Brascamp-Lieb inequality on Wiener space}\label{;bl}
In this subsection, the Brascamp-Lieb inequality formulated 
on the Wiener space is shown as an application of \tref{;tmain2}. 

We denote by $\W ^{*}$ (resp.~$\H ^{*}$) the topological dual 
space of $\W $ (resp.~of $\H $) and by  
$
\cpl{\cdot }{\cdot }\equiv 
{}_{\W ^{*}}\!\cpl{\cdot }{\cdot } _{\W }
$ 
the natural coupling 
between $\W ^{*}$ and $\W $. 
Identifying $\H ^{*}$ with $\H $ and noting the inclusion 
$\W ^{*}\subset \H ^{*}$, we regard each $l\in \W ^{*}$ as an element 
in $\H $, which we still denote by $l$. 

\begin{thm}\label{;tbl}
 Let $F:\W \to \R $ be a measurable function satisfying the following 
 assumptions \thetag{C1}--\thetag{C3}: 
 \begin{itemize}
 \item[\thetag{C1}] $F$ is concave on $\W $; 
 
 \item[\thetag{C2}] $\ex \left[ e^{F(\br )}\right] <\infty $; 
 
 \item[\thetag{C3}] there exists $\delta >0$ such that 
 $F_{-}\in L^{1+\delta }(\pr )$. 
 \end{itemize}
 We define the probability measure $\Q $ on $(\W ,\mathcal{B}(\W ))$ by 
 \begin{align*}
  \Q (A):=\frac{\ex \left[ \ind _{A}e^{F(\br )}\right] }
  {\ex \left[ e^{F(\br )}\right] }, \quad A\in \mathcal{B}(\W ), 
 \end{align*}
 and denote by $\ex _{\Q }$ the expectation with respect to $\Q $. Then 
 it holds that for any nonzero $l\in \W ^{*}$ and for any 
 convex function $\psi $ on $\R $, 
 \begin{align}\label{;blw}
  \ex _{\Q }\left[ 
  \psi \bigl( 
  \cpl{l}{\br }
  -\ex _{\Q }\left[ \cpl{l}{\br }\right] 
  \bigr) 
  \right] 
  \le \frac{1}{\sqrt{2\pi }|l|_{\H }}
  \int _{\R ^{}}\psi (z)\exp \left( 
  -\frac{z^{2}}{2|l|_{\H }^{2}}
  \right) dz. 
 \end{align}
\end{thm}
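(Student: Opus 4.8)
The right-hand side of \eqref{;blw} is the integral of $\psi$ against the centered Gaussian measure $\gamma$ on $\R$ of variance $|l|_{\H}^{2}$, so \eqref{;blw} asserts that the law $\nu$ of $\cpl{l}{\br }-\ex_{\Q}[\cpl{l}{\br }]$ under $\Q$ is dominated by $\gamma$ in the convex order. Since $\Q$ and $\pr$ are equivalent and the law of $\cpl{l}{\br }$ under $\pr$ is a non-degenerate Gaussian, $\nu$ is absolutely continuous; write $f$ for its density and $f_{\gamma}$ for that of $\gamma$. The plan is to prove two things: (ii)~the ratio $f/f_{\gamma}$ is log-concave---this is where \tref{;tmain2} enters; and (i)~the purely one-dimensional fact that a centered probability density whose ratio to $f_{\gamma}$ is log-concave yields a law dominated by $\gamma$ in the convex order. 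Granting (i) and (ii), \eqref{;blw} follows at once.

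For (ii), put $\sigma^{2}:=|l|_{\H}^{2}$ and $m:=\ex_{\Q}[\cpl{l}{\br }]$, and regard $l\in\W^{*}$ as an element of $\H\subset\W$. I would apply \tref{;tmain2} with $L=\Lambda=\R$ and
\[
 G(w,y):=F\!\left(w-\frac{\cpl{l}{w}-y}{\sigma^{2}}\,l\right),\qquad (w,y)\in\W\times\R .
\]
The map $(w,y)\mapsto w-\sigma^{-2}(\cpl{l}{w}-y)\,l$ is affine, so $G$ is jointly concave by \thetag{C1}; hence \thetag{B2} holds with the quadratic deficit to spare. As for \thetag{B1}, $\ex[(G(\br ,y))_{-}^{1+\delta}]$ coincides with the conditional moment $\ex[F_{-}(\br )^{1+\delta}\mid\cpl{l}{\br }=y]$, which is finite for Lebesgue-a.e.\ $y$ by \thetag{C3}. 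Writing $W^{\perp}:=\br-\sigma^{-2}\cpl{l}{\br }\,l$, which is independent of $\cpl{l}{\br }$ under $\pr$, one has $\ex[e^{G(\br ,y)}]=\ex[e^{F(W^{\perp}+\sigma^{-2}y\,l)}]=\ex[e^{F(\br )}\mid\cpl{l}{\br }=y]=:\rho(y)$, so \tref{;tmain2} yields the concavity of $y\mapsto\log\rho(y)$. Since the law of $\cpl{l}{\br }$ under $\Q$ has density proportional to $\rho(y)\,e^{-y^{2}/(2\sigma^{2})}$ (the $\pr$-density of $\cpl{l}{\br }$ times $\rho$), the density of $\nu$ is proportional to $\rho(y+m)\,e^{-(y+m)^{2}/(2\sigma^{2})}$, and therefore $\log\bigl(f(y)/f_{\gamma}(y)\bigr)=\log\rho(y+m)-my/\sigma^{2}+\mathrm{const}$ is concave, as desired.

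For (i), let $f$ be a centered probability density on $\R$ with $g:=f/f_{\gamma}$ log-concave, and put $\nu:=f\,dy$. Being quasi-concave, $g$ has an interval as its super-level set $\{g\ge1\}$, so $f-f_{\gamma}$ changes sign in the order $-,+,-$ (the degenerate cases forcing $\nu=\gamma$); moreover $f$ is at most a Gaussian density times an exponential, so $\nu$ has moments of all orders. Set $H(c):=\int_{\R}(y-c)_{+}\bigl(f(y)-f_{\gamma}(y)\bigr)\,dy$. Using $\int(f-f_{\gamma})=0$ and $\int y\,(f-f_{\gamma})=0$ one gets $H(\pm\infty)=0$; moreover $H'(c)$ is the difference of the distribution functions of $\nu$ and $\gamma$, so $H'(\pm\infty)=0$, while $H''=f-f_{\gamma}$ has the sign pattern $-,+,-$. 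It follows that $H'\le0$ on a left half-line and $H'\ge0$ on the complementary right half-line, so $H$ first decreases and then increases; with $H(\pm\infty)=0$ this forces $H\le0$, i.e.\ $\int(y-c)_{+}\,d\nu\le\int(y-c)_{+}\,d\gamma$ for every $c\in\R$, and likewise $\int(c-y)_{+}\,d\nu\le\int(c-y)_{+}\,d\gamma$ by the equality of the means. Finally, every convex $\psi:\R\to\R$ is the sum of an affine function and a nonnegative mixture of the hinge functions $y\mapsto(y-c)_{+}$ and $y\mapsto(c-y)_{+}$; integrating this against $\nu$ and against $\gamma$ and using that the affine part has the same integral under both measures yields $\int\psi\,d\nu\le\int\psi\,d\gamma$, which is \eqref{;blw}.

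The step I expect to demand the most care is the application of \tref{;tmain2} in (ii): the integrability \thetag{B1} is only available for a.e.\ $y$, so in a rigorous argument \tref{;tmain2} should be applied after a suitable regularization of $F$ (preserving \thetag{C1}) and the limit taken, and the measurable concavity of $\log\rho$ thus obtained should be upgraded to a genuine log-concave version; justifying the independence decomposition of $\pr$ along $\cpl{l}{\cdot}$ and the identification of $\ex[e^{G(\br ,y)}]$ with $\rho(y)$ belongs to the same technical layer, as does the approximation in (i) when $\psi$ is not $\gamma$-integrable (absorbed by truncating $\psi$, consistently with $\log\infty=\infty$). I would also flag that the cruder attempt---running \tref{;tmain2} with $G(w,t)=F(w)+t\cpl{l}{w}-\tfrac12 t^{2}\sigma^{2}$, which produces only the exponential-moment bound $\ex_{\Q}[e^{t(\cpl{l}{\br }-m)}]\le e^{t^{2}\sigma^{2}/2}$---does not suffice, since domination of moment generating functions does not imply the convex order (uniform laws are a counterexample); the log-concavity of $f/f_{\gamma}$ furnished by (ii) is precisely the additional input that is needed.
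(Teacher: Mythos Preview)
Your approach is essentially the paper's: reduce to one dimension by showing that the conditional expectation $\rho(y)=\ex\bigl[e^{F(\br)}\mid\cpl{l}{\br}=y\bigr]$ is log-concave via \tref{;tmain2}, using exactly the affine substitution $w\mapsto w-\sigma^{-2}(\cpl{l}{w}-y)\,l$, and then appeal to the one-dimensional Brascamp--Lieb inequality. For the one-dimensional step the paper simply cites \cite{bl,caf,har}; your self-contained sign-change argument for convex order is a correct alternative.

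The one place where the paper does something sharper than you anticipate is the point you flag about \thetag{B1}. You propose regularizing $F$ and passing to a limit; the paper avoids this entirely. Since $-G(w,\cdot)$ is convex and $r\mapsto(r\vee 0)^{1+\delta}$ is convex and nondecreasing, the composition $y\mapsto G_{-}(w,y)^{1+\delta}$ is convex for every fixed $w$. Hence for any $z_{0}\in\R$, choosing $z_{1}<z_{0}<z_{2}$ in the full-measure set where \eqref{;gmbd} holds and writing $z_{0}=\theta z_{1}+(1-\theta)z_{2}$ gives
\[
\ex\bigl[G_{-}(\br,z_{0})^{1+\delta}\bigr]\le \theta\,\ex\bigl[G_{-}(\br,z_{1})^{1+\delta}\bigr]+(1-\theta)\,\ex\bigl[G_{-}(\br,z_{2})^{1+\delta}\bigr]<\infty,
\]
so \thetag{B1} holds for \emph{every} $y\in\R$ and \tref{;tmain2} applies directly. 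The same device (concavity of $y\mapsto\log\rho(y)$, finite a.e.\ by \thetag{C2}) then yields finiteness of $\log\rho$ everywhere without any further regularization or version-selection argument.
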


\begin{rem}
 Suppose that $F:\W \to \R $ satisfies \thetag{C1} and 
 is upper-semicontinuous on $\W $. Then the assumption 
 \thetag{C2} is fulfilled because $F$ is bounded 
 from above by an affine function \cite[Proposition~2.20]{bp}; 
 we also refer to the fact that the concavity and upper-semicontinuity 
 of $F$ yield the continuity of $F$ \cite[Proposition~2.16]{bp} as $\W $ 
 is a Banach space. 
\end{rem}

Let $V:\R ^{d}\to \R $ be a convex function and $\Sigma $ a 
symmetric, positive definite $d\times d$-matrix. We consider the 
case that $F$ is given by 
\begin{align*}
 F(w)&=-V\left( \Sigma ^{1/2}w(1)\right) ,\quad w\in \W , 
\intertext{and satisfies \thetag{C3}, and that $l$ is of the form }
 \cpl{l}{w}&=\Sigma ^{1/2}\al \cdot w(1), \quad w\in \W , 
\end{align*}
for a given $\al \in \R ^{d}$ $(\al \neq 0)$. In this case the inequality 
\eqref{;blw} is restated as 
\begin{align}\label{;bl0}
 E \left[ 
 \psi \left( \al \cdot X-E\left[ \al \cdot X\right] \right) 
 \right] 
 \le  
 E \left[ 
 \psi \left( \al \cdot Y \right) 
 \right] 
\end{align}
for any convex function $\psi $ on $\R $. Here $X$ and $Y$ are 
$\R ^{d}$-valued random variables defined on a probability 
space $(\Omega ,\calF ,P)$, whose laws induced on $\R ^{d}$ are given 
respectively by 
\begin{align}\label{;laws}
 P\left( X\in dx\right) 
 =\frac{1}{Z}e^{-V(x)}\nu (dx), \quad P\left( Y\in dx\right) =\nu (dx) , 
\end{align}
where $\nu $ is the normal distribution with mean $0$ and 
covariance matrix $\Sigma $ and $Z$ is the normalizing constant. 
The inequality \eqref{;bl0} is referred to as the Brascamp-Lieb 
(moment) inequality; it was originally proven by Brascamp and Lieb 
\cite[Theorem~5.1]{bl} in the case $\psi (z)=|z|^{p},\,p\ge 1$, 
and later extended by Caffarelli \cite[Corollary~6]{caf} to general convex 
$\psi $'s based on analyses of the optimal transport between 
the laws of $X$ and $Y$. In \cite{har}, the author gives a proof of 
the Brascamp-Lieb inequality \eqref{;bl0} based on the Skorokhod embedding 
and the It\^o-Tanaka formula, and derives error estimates for the inequality 
in terms of the variances of $\al \cdot X$ and $\al \cdot Y$ 
\cite[Theorem~1.1]{har}. In these three papers proofs of \eqref{;bl0} are 
reduced to the one-dimensional case thanks to finite-dimensional 
Pr\'ekopa's theorem. The proof of \tref{;tbl} is done in 
the same way by employing \tref{;tmain2}, 
an infinite-dimensional version of Pr\'ekopa's theorem. 

\begin{proof}[Proof of \tref{;tbl}]
 Fix $l\in \W ^{*}$ $(l\neq 0)$. We may assume without loss of generality 
 that $|l|_{\H }=1$. Since the law of 
 $\cpl{l}{\br }$ under $\Q $ is expressed as 
 \begin{align*}
  \Q \left( 
  \cpl{l}{\br }\in dz
  \right) 
  =\frac{1}{\sqrt{2\pi }\ex \left[ e^{F(\br )}\right] }
  \exp \left( -\frac{z^2}{2}\right) 
  \ex \left[ 
  e^{F(\br )}\big| \cpl{l}{\br }=z
  \right] dz, \quad z\in \R , 
 \end{align*}
 it suffices to prove that the function 
 \begin{align}\label{;version}
 \R \ni z\mapsto \ex \left[ 
  e^{F(\br )}\big| \cpl{l}{\br }=z
  \right] 
 \end{align}
 admits an everywhere finite version that is log-concave in $z$. 
 To this end, define the path transform $w^{l},\,w\in \W $, by 
 \begin{align}\label{;ltrans}
  w^{l}(t):=w(t)-\cpl{l}{w}l(t), \quad 0\le t\le 1, 
 \end{align}
 where $l$ is regarded as an element in $\H $. Since 
 two Gaussians 
 $\left\{ \br ^{l}(t)\right\} _{0\le t\le 1}$ and 
 $\cpl{l}{\br }$ are uncorrelated, 
 they are independent, from which we have 
 \begin{align}\label{;cond}
  \ex \left[ 
  e^{F(\br )}\big| \cpl{l}{\br }=z
  \right] =\ex \left[ 
  e^{G(\br ,z)}
  \right] \quad \text{for a.e.\ }z\in \R , 
 \end{align}
 where we set $G(w,z):=F\left( w^{l}+zl\right) ,\,(w,z)\in \W \times \R $. 
 In view of \tref{;tmain2}, we show that this $G$ satisfies the 
 conditions \thetag{B1} and \thetag{B2}. It is clear that 
 $G$ satisfies \thetag{B2} thanks to the concavity of $F$ and 
 the linearity of the transformation \eqref{;ltrans}. 
 To see that \thetag{B1} is fulfilled, first note that by the 
 assumption \thetag{C3}, 
 \begin{align}\label{;gmbd}
  \ex \left[ 
  G_{-}(\br ,z)^{1+\delta }
  \right] <\infty %%\quad \text{for a.e.\ }z\in \R , 
 \end{align}
 for a.e.\ $z\in \R $, which readily follows by conditioning on 
 $\cpl{l}{\br }$ and using the independence noted above. We now show 
 that this a.e.\ finiteness can be extended to the everywhere 
 finiteness. For this purpose we fix $z_{0}\in \R $ arbitrarily. 
 Then we may find $z_{i}\in \R ,\,i=1,2$, such that 
 $
  z_{1}<z_{0}<z_{2}
 $ 
 and that each $z_{i}$ satisfies \eqref{;gmbd}. Since the 
 function $\R \ni r\mapsto (r\vee 0)^{1+\delta }$ is convex 
 and nondecreasing, and the function 
 $\R \ni z\mapsto -G(w,z)$ is convex for every fixed $w\in \W $, 
 their composition, namely 
 $G_{-}(w,z)^{1+\delta }=\left( (-G(w,z))\vee 0\right) ^{1+\delta }$, 
 is also convex in $z$, which entails that 
 \begin{align*}
  G_{-}(w,z_{0})^{1+\delta }\le 
  \theta G_{-}(w,z_{1})^{1+\delta }+(1-\theta )G_{-}(w,z_{2})^{1+\delta }
 \end{align*}
 for every $w\in \W $. Here 
 $\theta =(z_{2}-z_{0})/(z_{2}-z_{1})\in (0,1)$. 
 Taking the expectation in $w$ with respect to $\pr $ on both sides and 
 noting the finiteness \eqref{;gmbd} for $z_{i},\,i=1,2$, we obtain 
 \begin{align*}
  \ex \left[ 
  G_{-}(\br ,z_{0})^{1+\delta }
  \right] <\infty . 
 \end{align*}
 As $z_{0}\in \R $ is arbitrary, this shows that 
 $G$ satisfies the condition \thetag{B1} as well. 
 Therefore by \tref{;tmain2}, the function 
 $\R \ni z\mapsto \log \ex \left[ e^{G(\br ,z)}\right] $ 
 is concave. This function might take the value $\infty $, 
 but is finite a.e.\ by the assumption \thetag{C2} and the relation 
 \eqref{;cond}, which together with 
 concavity implies that it is in fact finite everywhere. 
 Consequently, the function \eqref{;version} 
 admits the everywhere finite log-concave version 
 $\ex \left[ e^{G(\br ,z)}\right] $. 
 The rest of the proof of the theorem proceeds in the same way 
 as in either \cite{bl}, \cite{caf}, or \cite{har}. 
\end{proof}

%%%%%%%%% Appendix %%%%%%%%%
\appendix 
\section*{Appendix}
\renewcommand{\thesection}{A}
\setcounter{equation}{0}
\setcounter{prop}{0}
\setcounter{lem}{0}
\setcounter{rem}{0}

\ND\D{\mathcal{D}_{V}}
\ND\fm{F_{X}}
\ND\pdpinv{\Phi '\circ \Phi ^{-1}}
\ND\fdfinv{\fm '\circ \fm ^{-1}}
\ND\U{U_{V}}

In this appendix, we continue our discussion in 
\cite[Appendix]{har} as to an extension of the 
Brascamp-Lieb inequality \eqref{;bl0} to the case 
of nonconvex potentials  
%% to the case with potential function $V$ not necessarily convex 
and explore conditions on the potential function $V$ under 
which the inequality \eqref{;bl0} remains true. We restrict our 
exposition to one dimension; a remark on the 
multidimensional case will be given at the end of the 
appendix. 
The Brascamp-Lieb inequality has importance in the analysis 
of $\nabla \phi $ interface models with convex potentials and 
there has recently been growing a great interest in models with nonconvex 
potentials; see \cite{fs,gos,bk,cd} and references 
therein. 

Let $\nu $ be the normal distribution with mean $0$ and 
variance $\sigma ^{2},\,\sigma >0$, and let 
one-dimensional random variables $X$ and $Y$ be 
as given in \eqref{;laws}, in which we now suppose that 
the function $V:\R \to \R $ is in $C^{2}(\R )$ and not 
necessarily convex. We assume that $V$ is bounded from 
below by a linear function: 
\begin{align}\label{;bdlin}
 V(x)\ge ax+b \quad \text{for all }x\in \R , 
\end{align}
for some $a,b\in \R $, so that 
\[
 Z=E\left[ e^{-V(Y)}\right] <\infty . 
\]
We are interested in the case that 
$\left\{ x\in \R ;\,V''(x)<0\right\} \neq \emptyset $, 
which we will work in from now on. We denote 
\begin{align*}
 \D =\left\{ x\in \R ;\,V''(x)\le 0\right\} . 
\end{align*}
With these settings, the aim of 
this appendix is to give a proof of the 

\begin{prop}\label{;pncvx}
 Suppose that 
 \begin{align}\label{;inf1}
 \inf _{x\in \D }\left\{ 
 \frac{1}{2}\sigma ^2
 V'(x)^2+xV'(x)-V(x)
 \right\} \ge \log Z. 
 \end{align}
 Then it holds that for any convex function $\psi $ on $\R $, 
 \begin{align}\label{;bl1}
 E\left[ 
 \psi \left( X-E[X]\right) 
 \right] 
 \le E\left[ \psi (Y)\right] . 
 \end{align} 
 In particular, the same conclusion holds true if 
 \begin{align}\label{;inf2}
 \inf _{x\in \D }\left\{ 
 -\frac{x^2}{2\sigma ^2}-V(x)
 \right\} \ge \log Z. 
 \end{align}
\end{prop}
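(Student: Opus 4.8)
The plan is to reduce \eqref{;bl1}, via the Skorokhod-embedding argument of \cite[Appendix]{har}, to the assertion that the increasing transport map $T:=\fm^{-1}\circ\Phi$ carrying $\nu$ onto the law of $X$ is a contraction, i.e.\ $T'\le1$ on $\R$ (equivalently $\fdfinv\ge\pdpinv$ on $(0,1)$; here $\Phi$ is the distribution function of $\nu$ and $\fm$ that of $X$). The mechanism, which I would recall from \cite{har}, is this: realize $\nu$ as the law of $B_{\sigma^2}$ for a standard Brownian motion $(B_t)$, put $N_t:=E[\,T(B_{\sigma^2})\mid\mathcal F_t\,]=h(t,B_t)$ with $h(t,\cdot)$ the heat semigroup at time $\sigma^2-t$ applied to $T$, so that $N_0=E[X]$, $N_{\sigma^2}\stackrel{d}{=}X$ and $dN_t=\partial_xh(t,B_t)\,dB_t$; since $\partial_xh(s,\cdot)$ is a heat average of $T'\ge0$, the bound $T'\le1$ forces $\int_0^{\sigma^2}\partial_xh(s,B_s)^2\,ds\le\sigma^2$, and then the Dambis--Dubins--Schwarz representation of $N-N_0$, optional sampling at the resulting bounded clock, and Jensen's inequality give $E[\psi(X-E[X])]\le E[\psi(Y)]$. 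I would also settle the last assertion immediately: completing the square shows $\U(x)=\tfrac{\sigma^2}{2}\bigl(V'(x)+x/\sigma^2\bigr)^2-x^2/(2\sigma^2)-V(x)\ge-x^2/(2\sigma^2)-V(x)$, so that \eqref{;inf2} implies \eqref{;inf1}.

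For the contraction bound, set $g:=\log T'$; since $\fm'=Z^{-1}e^{-V}\phi_\sigma>0$ ($\phi_\sigma$ being the density of $\nu$) and $V\in C^2$, the map $T$ is $C^3$ and strictly increasing, so $g$ is a $C^2$ function on $\R$. Differentiating $\fm(T(y))=\Phi(y)$ and inserting the explicit densities yields
\[
 g(y)=\log Z+V(T(y))+\frac{T(y)^2-y^2}{2\sigma^2}.
\]
Differentiating this once, together with $g'=T''/T'$, gives at any critical point $y_0$ of $g$ (equivalently, at any zero of $T''$) the relation $T'(y_0)\bigl(\sigma^2V'(T(y_0))+T(y_0)\bigr)=y_0$, and a second differentiation gives $g''(y_0)=V''(T(y_0))T'(y_0)^2+\bigl(T'(y_0)^2-1\bigr)/\sigma^2$. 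Now suppose $g$ attains its supremum over $\R$, at a point $y_0$, and that this supremum is $>0$; then $T'(y_0)=e^{g(y_0)}>1$, so $g''(y_0)\le0$ forces $V''(T(y_0))<0$, whence $x_0:=T(y_0)\in\D$. Inserting $y_0=e^{g(y_0)}\bigl(\sigma^2V'(x_0)+x_0\bigr)$ into the formula for $g(y_0)$ and using $e^{2g(y_0)}>1$,
\[
 g(y_0)\le\log Z+V(x_0)+\frac{x_0^2-\bigl(\sigma^2V'(x_0)+x_0\bigr)^2}{2\sigma^2}
 =\log Z-\Bigl(\tfrac{\sigma^2}{2}V'(x_0)^2+x_0V'(x_0)-V(x_0)\Bigr)=\log Z-\U(x_0),
\]
which is $\le0$ by \eqref{;inf1} because $x_0\in\D$ --- contradicting $g(y_0)>0$. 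Hence $g\le0$, i.e.\ $T'\le1$. (If the supremum of $g$ is not attained, one argues instead on a bounded connected component of $\{g>0\}$, where $g$ does attain an interior maximum; so it suffices to know that $\{g>0\}$, when nonempty, has a bounded component.)

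The one genuinely analytic ingredient, and the step I expect to be the main obstacle, is the behaviour of $g=\log T'$ at $\pm\infty$: one must exclude $g$ remaining positive along a whole ray. This is precisely where the linear lower bound \eqref{;bdlin} on $V$ (with $Z<\infty$) enters, through the tail asymptotics it imposes on $\fm$; concretely, I would use it to show $\liminf_{y\to\pm\infty}g(y)\le0$ --- equivalently, that the ratio of $(\pdpinv)(u)$ to $(\fdfinv)(u)$ does not stay above $1$ as $u\to0^+$ or $u\to1^-$. This estimate is of the same nature as the tail analysis already carried out in \cite[Appendix]{har}, on which the present proposition builds, so I would expect to lift it from there (or to re-prove it by a direct comparison of tails). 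An alternative that bypasses it is to approximate $V$ by $C^2$ potentials agreeing with a quadratic outside a large interval --- for which the supremum of $g$ is automatically attained and \eqref{;inf1} can be arranged to hold up to an arbitrarily small error --- prove \eqref{;bl1} for those, and pass to the limit; the price is verifying stability of the hypothesis under the approximation. Everything else, namely the Girsanov/Dambis--Dubins--Schwarz bookkeeping of the first paragraph and the two differentiations of the second, is routine.
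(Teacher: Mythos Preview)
Your approach is correct in outline and genuinely different from the paper's. Both reduce \eqref{;bl1} to the contraction $T'\le 1$ (equivalently $g'\le\sigma$ with $g=F_X^{-1}\circ\Phi$, $\Phi$ the standard normal cdf, in the paper's notation), and then invoke the Skorokhod-embedding argument of \cite{har}. The difference lies in how the contraction is proved. You work with $\log T'$ and, crucially, use the \emph{second}-derivative condition at a positive interior maximum to force $V''(T(y_0))<0$, i.e.\ $T(y_0)\in\D$, so that \eqref{;inf1} applies directly. The paper instead works with the \emph{difference}
\[
G(\xi)=\sigma\,\fm'\!\circ\fm^{-1}(\xi)-\Phi'\!\circ\Phi^{-1}(\xi),\qquad \xi\in(0,1),
\]
uses only the \emph{first}-derivative condition at a local minimum of $G$, and therefore needs $\U(x)\ge\log Z$ at every $x\in\R$ (Lemma~A.1). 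It then supplies a separate geometric step (tangent-line comparison) to extend \eqref{;inf1} from $\D$ to all of $\R$: for $x_0\notin\D$ either the tangent at $x_0$ lies strictly below $V$, in which case $Z\le e^{\U(x_0)}$ by direct integration, or the tangent touches $V$ again, in which case a maximal point $x_2$ of $V$ minus the tangent satisfies $V'(x_2)=V'(x_0)$, $x_2\in\D$, and $\U(x_0)\ge\U(x_2)\ge\log Z$.

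The trade-off is exactly at the place you flag as the obstacle. For $G$ the boundary behaviour is a one-liner: since \eqref{;bdlin} forces $\fm'\!\circ\fm^{-1}(\xi)\to 0$ and $\Phi'\!\circ\Phi^{-1}(\xi)\to 0$ as $\xi\to 0^+,1^-$, one has $G(0+)=G(1-)=0$, so a negative value of $G$ would be attained at an interior local minimum. For your $\log T'$ this becomes a comparison of two quantities both tending to $-\infty$, and showing $\liminf_{y\to\pm\infty}\log T'(y)\le 0$ from \eqref{;bdlin} alone requires a genuine Gaussian tail-ratio estimate (it does go through: \eqref{;bdlin} gives $1-\fm(x)\le C\bigl(1-\Phi((x+a\sigma^2)/\sigma)\bigr)$, hence $T(y)-y$ is bounded above as $y\to+\infty$, which excludes $T'\ge 1+\varepsilon$ on a ray). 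So your second-derivative trick buys you a direct landing in $\D$ and dispenses with the tangent-line argument; the paper's choice of the difference $G$ buys a trivial boundary analysis and shifts the work to that tangent-line step. Both are short; the paper's route closes the gap you left open without any tail asymptotics.
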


We give an example: 
\begin{exm}\label{;dw}
 Consider the potential $V$ of the form 
 \[
 V(x)=\frac{1}{2}\al ^2x^4-\frac{1}{2}\beta x^2, \quad x\in \R , 
 \]
 for $\al ,\beta >0$. Take $\sigma =1$ for simplicity. Then the 
 left-hand side of \eqref{;inf2} is calculated as 
 \[
 \frac{\beta (5\beta -6)}{72\al ^2}\wedge 0, 
 \]
 which tends to $0$ as $\al \to \infty $. On the other hand, 
 as 
 \begin{align*}
  Z=\frac{1}{\sqrt{2\pi \al }}
  \int _{\R }\exp \left( 
  \frac{\beta -1}{2\al }y^2-\frac{1}{2}y^4
  \right) dy 
 \end{align*}
 by change of variables, it is clear that the right-hand side 
 of \eqref{;inf2} diverges to $-\infty $ as $\al \to \infty $. 
 Therefore even if $\beta \gg 1$, the condition \eqref{;inf2} 
 is fulfilled by taking $\al $ sufficiently large, and hence 
 the inequality \eqref{;bl1} holds for such a pair of $\al $ 
 and $\beta $ by \pref{;pncvx}. 
\end{exm}

\begin{rem}
 As for the above example, the left-hand side of \eqref{;inf1} is 
 equal to 
 \begin{align*}
  \frac{\beta ^2(8\beta -9)}{216\al ^2}\wedge 0, 
 \end{align*}
 which gives a sharper condition on $\al $ and $\beta $ 
 for \eqref{;bl1} to hold. 
\end{rem}

We proceed to the proof of \pref{;pncvx}. In what follows we denote 
\begin{align*}
 \U (x)=\frac{1}{2}\sigma ^2
 V'(x)^2+xV'(x)-V(x), \quad x\in \R . 
\end{align*}
We also denote by $\fm $ the distribution function of the 
random variable $X$: 
\begin{align*}
 \fm (x)=\frac{1}{Z}\int _{-\infty }^{x}
 e^{-V(y)}\,\nu (dy), \quad x\in \R . 
\end{align*}
We define 
\begin{align}\label{;defg}
 g:=\fm ^{-1}\circ \Phi , 
\end{align}
where $\fm ^{-1}$ is the inverse function of $\fm $ and 
$\Phi $ is the standard normal cumulative distribution function: 
\begin{align*}
 \Phi (x)=\frac{1}{\sqrt{2\pi }}\int _{-\infty }^{x}
 \exp \left( -\frac{y^2}{2}\right) dy, \quad x\in \R . 
\end{align*}

\begin{lem}\label{;prepare}
 Suppose that for all $x\in \R $, 
 \begin{align}\label{;inf3}
  \U (x)\ge \log Z. %%\quad \text{for all }x\in \R . 
 \end{align}
 Then the inequality \eqref{;bl1} holds for any 
 convex function $\psi $ on $\R $. 
\end{lem}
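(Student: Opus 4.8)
The plan is to realize both $X$ and $Y$ as images of a single standard Gaussian variable and to reduce \eqref{;bl1} to a contraction property of the map $g$ from \eqref{;defg}. Let $N$ be a standard normal random variable. Since $\fm$ is the distribution function of $X$ and $g=\fm^{-1}\circ\Phi$, the variable $g(N)$ has the same law as $X$, while $\sigma N$ has the same law as $Y$ because $\nu$ is centred Gaussian with variance $\sigma^{2}$; as $g$ is increasing, $E[X]=E[g(N)]$, so \eqref{;bl1} is equivalent to $E\bigl[\psi(g(N)-E[g(N)])\bigr]\le E[\psi(\sigma N)]$ for every convex $\psi$. I would obtain this from two ingredients: (i) a Gaussian contraction lemma — if $\phi:\R\to\R$ is non-decreasing and $c$-Lipschitz then $E[\psi(\phi(N)-E[\phi(N)])]\le E[\psi(cN)]$ for all convex $\psi$; and (ii) the claim that, under the hypothesis \eqref{;inf3}, the map $g$ is non-decreasing and $\sigma$-Lipschitz.

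For (i), after mollifying $\psi$ and carrying out the usual integrability reductions one may assume $\psi\in C^{2}$ with $\psi''\ge0$. Write $\phi(x)=cx+\rho(x)$; the hypotheses on $\phi$ give $\rho'=\phi'-c\in[-c,0]$, so $\rho$ is non-increasing and $c$-Lipschitz, and with $\tilde\rho:=\rho-E[\rho(N)]$ one has $\phi(N)-E[\phi(N)]=cN+\tilde\rho(N)$ and $E[\tilde\rho(N)]=0$. Setting $h(a)=E[\psi(cN+a\tilde\rho(N))]$ for $a\in[0,1]$, the vanishing mean of $\tilde\rho(N)$ gives $h'(a)=\mathrm{Cov}\bigl(\psi'(cN+a\tilde\rho(N)),\tilde\rho(N)\bigr)$. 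Since $x\mapsto cx+a\tilde\rho(x)$ has derivative in $[c(1-a),c]\subset[0,\infty)$, the first argument is a non-decreasing function of $N$ while $\tilde\rho(N)$ is non-increasing, so the one-dimensional FKG (association) inequality yields $h'(a)\le0$; hence $h(1)\le h(0)$, which is (i).

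For (ii), $g$ is increasing by construction and $C^{1}$ with $g'(x)=\Phi'(x)/\fm'(g(x))$ (equivalently $g'=\pdpinv/\fdfinv$ in the quantile variable $q=\Phi(\cdot)$), where $\fm'(y)=(Z\sqrt{2\pi}\sigma)^{-1}e^{-V(y)-y^{2}/(2\sigma^{2})}$. Thus $\Psi(x):=\log(g'(x)/\sigma)=\log Z+V(g(x))+g(x)^{2}/(2\sigma^{2})-x^{2}/2$, and $\sigma$-Lipschitzness of $g$ amounts to $\Psi\le0$ on $\R$. From $\Psi'(x)=g'(x)\bigl(V'(g(x))+g(x)/\sigma^{2}\bigr)-x$, at a critical point $x_{0}$ one has, writing $y_{0}=g(x_{0})$ and using $g'(x_{0})=\sigma e^{\Psi(x_{0})}$, that $x_{0}^{2}/2=e^{2\Psi(x_{0})}\cdot\tfrac{\sigma^{2}}{2}(V'(y_{0})+y_{0}/\sigma^{2})^{2}$; substituting this into $\Psi(x_{0})$ and using the identity $\tfrac{\sigma^{2}}{2}(V'(y_{0})+y_{0}/\sigma^{2})^{2}=\U(y_{0})+V(y_{0})+y_{0}^{2}/(2\sigma^{2})$ gives
\[
 \Psi(x_{0})=\log Z+V(y_{0})+\frac{y_{0}^{2}}{2\sigma^{2}}-e^{2\Psi(x_{0})}\Bigl(\U(y_{0})+V(y_{0})+\frac{y_{0}^{2}}{2\sigma^{2}}\Bigr).
\]
As $\U(y_{0})+V(y_{0})+y_{0}^{2}/(2\sigma^{2})=\tfrac{\sigma^{2}}{2}(V'(y_{0})+y_{0}/\sigma^{2})^{2}\ge0$ and $\U(y_{0})\ge\log Z$ by \eqref{;inf3}, the assumption $\Psi(x_{0})>0$ would force (via $e^{2\Psi(x_{0})}>1$) the contradiction $\Psi(x_{0})\le\log Z-\U(y_{0})\le0$; hence $\Psi\le0$ at every critical point. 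To pass to $\Psi\le0$ on all of $\R$ I would analyse $\Psi$ as $x\to\pm\infty$: by \eqref{;bdlin} the density of $X$ has at most Gaussian tails, which forces $g(x)=O(|x|)$ and so rules out blow-up of $\Psi$ at infinity; and if $\Psi$ converged to a positive limit at $+\infty$ or $-\infty$, then $g'\to\sigma e^{L}$ with $L>0$, whence the change-of-variables identity makes $\fm'(y)$ decay like $e^{-cy^{2}}$ with $c<1/(2\sigma^{2})$, i.e.\ $V(y)$ of order $-y^{2}$, again contradicting \eqref{;bdlin}. Since $\Psi$ is continuous, a supremum $>0$ attained at an interior point would be a critical value, which we have excluded; therefore $\Psi\le0$ throughout, proving (ii).

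Combining (i) with $\phi=g,\ c=\sigma$ and (ii) yields \eqref{;bl1}. I expect the limiting analysis at $\pm\infty$ in (ii) to be the main obstacle: $\Psi$ need not vanish at infinity — for $V\equiv0$ one has $\Psi\equiv0$, which is exactly the equality case $\U\equiv\log Z$ — so the global conclusion is not merely a critical-point argument and genuinely uses \eqref{;bdlin}; by contrast the interior computation is short once the identity relating $(V'(y)+y/\sigma^{2})^{2}$ to $\U(y)$ is noticed. The remaining points — mollification of $\psi$, the integrability reductions in (i), and the $C^{1}$-regularity of $g$ (which holds since $V\in C^{2}$, so $X$ has a continuous positive density) — are routine.
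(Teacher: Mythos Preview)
Your argument for \textbf{(i)} is correct and differs from the paper's. The paper does not reprove the passage from $g'\le\sigma$ to \eqref{;bl1}; it quotes \cite{har}, where $g'\le\sigma$ is fed into Bass' Skorokhod embedding to write $X-E[X]\stackrel{(d)}{=}B(T)$ with $T\le\sigma^{2}$ a.s., and \eqref{;bl1} then follows from optional sampling (or It\^o--Tanaka) applied to the submartingale $\{\psi(B(t))\}$. Your interpolation/FKG argument is more elementary and entirely self-contained; it yields the convex-order domination directly without stochastic calculus.

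For \textbf{(ii)} your critical-point identity is right and is essentially the paper's computation, but your choice of variable creates the very obstacle you flag. The paper works in the quantile variable $\xi=\Phi(x)\in(0,1)$ with
\[
 G(\xi):=\sigma\,\fm'\!\circ\fm^{-1}(\xi)-\Phi'\!\circ\Phi^{-1}(\xi),
\]
so that $g'\le\sigma$ is exactly $G\ge0$. Here the boundary values are immediate: $\Phi'\!\circ\Phi^{-1}(\xi)\to0$ trivially, and $\fm'\!\circ\fm^{-1}(\xi)\to0$ because \eqref{;bdlin} forces the density of $X$ to vanish at $\pm\infty$; thus $G(0+)=G(1-)=0$. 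At a local minimum $\xi_{0}$, setting $x=\fm^{-1}(\xi_{0})$, the equation $G'(\xi_{0})=0$ gives $\Phi^{-1}(\xi_{0})=x/\sigma+\sigma V'(x)$, whence
\[
 G(\xi_{0})=\frac{1}{\sqrt{2\pi}}\,e^{-x^{2}/(2\sigma^{2})-V(x)}\Bigl(\tfrac{1}{Z}-e^{-\U(x)}\Bigr)\ge0
\]
by \eqref{;inf3}. A negative value of $G$ would therefore force a negative interior minimum, which is excluded, so $G\ge0$ and the proof is complete with no tail analysis at all. By contrast, your function $\Psi$ need not tend to $0$ at $\pm\infty$, and your treatment there is incomplete: you discuss $\Psi\to+\infty$ and $\Psi\to L\in(0,\infty)$ but not the case $\limsup\Psi>0$ with no limit (that case is actually handled by the critical-point bound, since oscillation produces interior local maxima, but you do not say so), and the two cases you do treat are only sketched. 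Switching to the $\xi$-variable removes the obstacle entirely.
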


\begin{proof}
 In view of the proof of \cite[Theorem~1.1]{har}, it suffices to 
 show that 
 \begin{align}\label{;gbd}
  g'(x)\le \sigma \quad \text{for all }x\in \R . 
 \end{align}
 Indeed, if \eqref{;gbd} has been proven, then using Bass' solution 
 \cite{ba} to the Skorokhod embedding problem, one finds that 
 for a given one-dimensional standard Brownian motion 
 $B=\{ B(t)\} _{t\ge 0}$, there exists a stopping time 
 $T$ with respect to the natural filtration of $B$ such that 
 \begin{align*}
  X-E[X]\stackrel{(d)}{=}B(T) && \text{and} && 
  T\le \sigma ^2\quad \text{a.s. }
 \end{align*}
 Then the inequality \eqref{;bl1} is immediate either from 
 the optional sampling theorem applied to the 
 submartingale $\left\{ \psi (B(t))\right\} _{t\ge 0}$, 
 or from the It\^o-Tanaka formula. See \cite[Subsection~2.1]{har} 
 for details. 
 
 We turn to the proof of \eqref{;gbd}. The reasoning is the 
 same as in the proof of \cite[Lemma~2.1]{har}. Since 
 \begin{align*}
  g'(x)=\frac{\Phi '(x)}{\fdfinv \left( \Phi (x)\right) } 
 \end{align*}
 by the definition \eqref{;defg} of $g$, the inequality 
 \eqref{;gbd} is equivalent to 
 \begin{align}\label{;nonneg}
  G (\xi ):=\sigma \fdfinv (\xi )-\pdpinv (\xi )\ge 0 \quad 
  \text{for all }\xi \in (0,1). 
 \end{align}
 First note that 
 \begin{align}\label{;bdry}
  G(0+)=\lim _{\xi \to 0+}G(\xi )=0, 
  \quad G(1-)=\lim _{\xi \to 1-}G(\xi )=0 
 \end{align}
 because $\pdpinv $ satisfies 
 $\pdpinv (0+)=\pdpinv (1-)=0$ and so does 
 $\fdfinv $ by \eqref{;bdlin}. 
 We now suppose that $G$ has a local minimum at 
 some $\xi _{0}\in (0,1)$. Then, since 
  \begin{align*}
  G '(\xi )=-\left( \frac{x}{\sigma }+\sigma V '(x)\right) 
  \Big| _{x=\fm ^{-1}(\xi )}
  +\Phi ^{-1}(\xi ), \quad \xi \in (0,1), 
 \end{align*}
 we have 
 \begin{align*}
  \Phi ^{-1}(\xi _{0})=
  \left( \frac{x}{\sigma }+\sigma V '(x)\right) 
  \Big| _{x=\fm ^{-1}(\xi _{0})}. 
 \end{align*}
 Therefore by the definition of $G$, 
 \begin{align*}
  G (\xi _{0})&=\left\{ 
  \sigma \fm '(x)-\Phi '\left( \frac{x}{\sigma }+\sigma V '(x)\right) 
  \right\} \Big| _{x=\fm ^{-1}(\xi _{0})}\\
  &=\frac{1}{\sqrt{2\pi }}
  \exp \left( -\frac{x^2}{2\sigma ^2}-V(x)\right) 
  \left\{ 
  \frac{1}{Z}-\exp \left( -\U (x)\right) 
  \right\} \bigg| _{x=\fm ^{-1}(\xi _{0})}, 
 \end{align*}
 which is nonnegative by the assumption. 
 Combining this with \eqref{;bdry} shows 
 \eqref{;nonneg} and concludes the proof. 
\end{proof}

Using \lref{;prepare}, we prove \pref{;pncvx}

\begin{proof}[Proof of \pref{;pncvx}]
The latter assertion is immediate from the fact that 
\begin{align*}
 \U (x)&=\frac{1}{2}\left( 
 \sigma V'(x)+\frac{x}{\sigma}
 \right) ^2-\frac{x^2}{2\sigma ^2}-V(x)\\
 &\ge -\frac{x^2}{2\sigma ^2}-V(x)
\end{align*}
for all $x\in \R $. To show the former, 
take an arbitrary $x_{0}\in \R \backslash \D $, namely 
$x_{0}$ is such that $V''(x_{0})>0$. First we suppose that 
\begin{align*}
 V(x)>V'(x_{0})(x-x_{0})+V(x_{0}) 
\end{align*}
for all $x\in \R $ but $x_{0}$. 
Then as 
\begin{align*}
 Z&=\frac{1}{\sqrt{2\pi }\sigma }\int _{\R }\exp 
 \left( -\frac{x^2}{2\sigma ^2}-V(x)\right) dx\\
 &\le \frac{1}{\sqrt{2\pi }\sigma }
 \exp \left( x_{0}V'(x_{0})-V(x_{0})\right) 
 \int _{\R }\exp \left( -\frac{x^2}{2\sigma ^2}-V'(x_{0})x\right) dx\\
 &=\exp \left( \U (x_{0})\right) , 
\end{align*}
the inequality \eqref{;inf3} holds for $x=x_{0}$. 
Next we suppose that 
\begin{align*}
 V(x_{1})=V'(x_{0})(x_{1}-x_{0})+V(x_{0})
\end{align*}
for some $x_{1}\neq x_{0}$, say, $x_{1}>x_{0}$. Let 
$x_{2}\in [x_{0},x_{1}]$ be a maximal point of the 
function 
\begin{align*}
 f(x):=V(x)-V'(x_{0})(x-x_{0})-V(x_{0}), \quad 
 x\in [x_{0},x_{1}]. 
\end{align*}
Then it is clear that 
$f'(x_{2})=0$ and $f''(x_{2})\le 0$; 
indeed, if either of them were not the case, it would contradict 
the fact that $x_{2}$ is the maximal point. Therefore we have 
\begin{align}%%\label{;}
 V'(x_{0})&=V'(x_{2}) \label{;veq}
\intertext{and }
 x_{2}&\in \D . \label{;indv}
\end{align}
Moreover, since 
\begin{align*}
 f(x_{2})=V(x_{2})-V'(x_{0})(x_{2}-x_{0})-V(x_{0})
 \ge f(x_{0})=0, 
\end{align*}
it also holds that by \eqref{;veq}, 
\begin{align*}
 x_{0}V'(x_{0})-V(x_{0})\ge x_{2}V'(x_{2})-V(x_{2}). 
\end{align*}
Combining this inequality with \eqref{;veq}, we have 
\begin{align*}
 \U (x_{0})&\ge \U (x_{2})\\
 %%&\ge \inf _{x\in \D }\U (x)\\
 &\ge \log Z, 
\end{align*}
where the second line is due to 
\eqref{;indv} and the assumption \eqref{;inf1}. 
Consequently, \eqref{;inf3} holds for all 
$x\in \R \backslash \D $, and hence for all $x\in \R $ 
by \eqref{;inf1}. Now the assertion of the proposition 
follows from \lref{;prepare}. 
\end{proof}

\begin{rem}\label{;rappend}
If one wants to apply the above discussion to the 
multidimensional case in order to extend the 
Brascamp-Lieb inequality \eqref{;bl0} to nonconvex potentials, 
it would be required to draw a condition on $V$ under which 
the function $\widetilde{V}$ defined by 
\begin{align*}
 \widetilde{V}(x)=
 -\log E\left[ 
 e^{-V(Y)}\big| \,\al \cdot Y=x
 \right] , \quad x\in \R , 
\end{align*}
fulfills either the assumption \eqref{;inf1} or \eqref{;inf2} of \pref{;pncvx} 
with $\sigma ^2=\al \cdot \Sigma \al $. 
Our original motivation to extend the variational 
representation \eqref{;vr0} to unbounded functionals stems from our 
desire to understand better the quantitative nature of 
$\widetilde{V}$ as well as that of the partition 
function $Z$. 
\end{rem}
\medskip 

\noindent 
{\bf Acknowledgements.} The author would like to thank Professor 
\"Ust\"unel for notifying him of two papers \cite{fu} and \cite{ust}. 

%%%%%%%%% References %%%%%%%%%

\end{document}